\definecolor{lightblue}{HTML}{A1BDC7}
\definecolor{orange}{HTML}{D98C21}
\definecolor{silver}{HTML}{B0ABA8}
\definecolor{rust}{HTML}{B8420F}
\definecolor{seagreen}{HTML}{2E6B69}
\definecolor{joshua}{HTML}{FBDC7F}
\definecolor{darksky}{HTML}{154c79}
\colorlet{lightsilver}{silver!30!white}
\colorlet{darkorange}{orange!85!black}
\colorlet{darksilver}{silver!85!black}
\colorlet{darklightblue}{lightblue!85!black}
\colorlet{darkrust}{rust!85!black}
\colorlet{darkseagreen}{seagreen!85!black}
\newcommand*{\Reals}{\mathbb{R}}
\newcommand*{\Naturals}{\mathbb{N}}
\newcommand*{\dx}{\,\mathrm{d}x}
\renewcommand*{\O}{\mathcal{O}}
\newcommand*{\dist}{\operatorname{dist}}
\newcommand*{\trunc}{\operatorname{trunc}}
\newcommand*{\argmin}{\operatorname{argmin}}
\newcommand*{\colors}{\mathcal{C}}
\newcommand*{\supercolors}{\tilde{\mathcal{C}}}
\newcommand{\subalign}[1]{%
  \vcenter{%
    \Let@ \restore@math@cr \default@tag
    \baselineskip\fontdimen10 \scriptfont\tw@
    \advance\baselineskip\fontdimen12 \scriptfont\tw@
    \lineskip\thr@@\fontdimen8 \scriptfont\thr@@
    \lineskiplimit\lineskip
    \ialign{\hfil$\m@th\scriptstyle##$&$\m@th\scriptstyle{}##$\hfil\crcr
      #1\crcr
    }%
  }%
}
\newcommand*{\K}{G}
\newcommand*{\KM}{\mathbf{\Theta}}
\newcommand*{\IK}{\mathcal{L}}
\newcommand*{\IKM}{\mathbf{A}}
\newcommand*{\rhsM}{\mathbf{b}}
\newcommand*{\solM}{\mathbf{u}}
\newcommand*{\dv}{\mathbf{v}}
\newcommand*{\de}{\mathbf{e}}
\newcommand*{\dw}{\mathbf{w}}
\newcommand*{\cw}{w}
\newcommand*{\cW}{W}
\newcommand*{\cv}{v}
\newcommand*{\cV}{V}
\newcommand*{\dV}{\mathbf{V}}
\newcommand*{\dtau}{\boldsymbol{\tau}}
\newcommand*{\dt}{\boldsymbol{t}}
\newcommand*{\dW}{\mathbf{W}}
\newcommand*{\dM}{\mathbf{M}}
\newcommand*{\dm}{\mathbf{m}}
\newcommand*{\disco}{\mathbf{o}}
\newcommand*{\dL}{\mathbf{L}}
\newcommand*{\dO}{\mathbf{O}}
\newcommand*{\oracle}{\omega}
\renewcommand*{\S}{\mathcal{S}}
\newcommand*{\schur}{\mathbb{S}}
\newcommand*{\addschur}{\mathbb{A}}
\newcommand*{\I}{I}
\newcommand*{\J}{J}
\newcommand*{\rhs}{b}
\newcommand*{\fro}{\operatorname{Fro}}
\newcommand{\poly}{\operatorname{poly}}
\crefname{hypothesis}{Hypothesis}{Hypotheses}
\title{Sparse recovery of elliptic solvers from matrix-vector products}
\author{%
  Florian\ Sch{\"a}fer\thanks{Georgia Institute of Technology, S1317 CODA, 756 W Peachtree St Atlanta, GA 30332, \newline \email{fts@gatech.edu}, Corresponding Author} 
  \and
  Houman Owhadi\thanks{Department of Computing + Mathematical Sciences, California Institute of Technology}
}
\DeclareMathOperator{\diag}{diag}
\begin{document}

\maketitle

\begin{abstract}
  In this work, we show that solvers of elliptic boundary value problems in $d$ dimensions can be approximated to accuracy $\epsilon$ from only $\O\left(\log(N)\log^{d}(N / \epsilon)\right)$ matrix-vector products with carefully chosen vectors (right-hand sides).  The solver is only accessed as a black box, and the underlying operator may be unknown and of an arbitrarily high order.
  Our algorithm (1)  has complexity $\O\left(N\log^2(N)\log^{2d}(N / \epsilon)\right)$ and represents the solution operator as a sparse Cholesky factorization with  $\O\left(N\log(N)\log^{d}(N / \epsilon)\right)$ nonzero entries, (2) allows for embarrassingly parallel evaluation of the solution operator and the computation of its log-determinant, (3) 
  allows for $\O\left(\log(N)\log^{d}(N / \epsilon)\right)$ complexity computation of individual entries of the matrix representation of the solver that, in turn, enables its recompression to an $\O\left(N\log^{d}(N / \epsilon)\right)$ complexity representation.    As a byproduct, our compression scheme produces a homogenized solution operator with near-optimal approximation accuracy. 
  By polynomial approximation, we can also approximate the continuous Green's function (in operator and Hilbert-Schmidt norm) to accuracy $\epsilon$ from $\mathcal{O}\left(\log^{1 + d}\left(\epsilon^{-1}\right)\right)$ solutions of the PDE. We include rigorous proofs of these results. 
  To the best of our knowledge, our algorithm achieves the best known trade-off between accuracy $\epsilon$ and the number of required matrix-vector products.
\end{abstract}

\begin{keywords}
   Cholesky factorization, elliptic PDE, numerical homogenization, sparsity, principal component analysis, learning solution operators
\end{keywords}

\begin{AMS}
  65N55, 65N22, 65N15
\end{AMS}

\section{Introduction}
\subsection{Fast solvers are not enough}
Linear elliptic partial differential equations (PDEs) $\IK u = \rhs$ are ubiquitous in engineering, physics, and machine learning.
After discretization, the solution of these equations amounts to solving a linear system of $N$ equations $\IKM \solM = \rhsM$. 
In this work, we assume that we have access to the PDE through a black box $\oracle(\rhs) \coloneqq \IK^{-1} \rhs$ or $\oracle(\rhsM) \coloneqq \IKM^{-1} \rhsM$ that solves the PDE for arbitrary right-hand sides $\rhs$ or $\rhsM$.
In practice, $\oracle$ could be implemented by a legacy solver or physical experiments. 
While $N$ invocations of $\oracle$ are enough to fully reconstruct $\IKM^{-1}$ and therefore $\IKM$, this is prohibitively expensive since invoking $\oracle$ for a single right-hand side has a computational cost of at least $N$ (to write down the result) and may require a physical experiment or extensive computation (like the solution of a PDE).

\subsubsection{Our contribution} For $\Omega \subset \Reals^d$ a Lipschitz bounded domain, $0 < s  \in \Naturals$, and $\IK:H_0^s(\Omega) \longrightarrow H^{-s}(\Omega)$, let $\IK$ be linear, invertible, continuous, local, positive, and self-adjoint. In this setting, we show how to obtain an $\epsilon$-accurate sparse Cholesky factorization of $\IKM^{-1}$ using only $\O\big(\log(N) \log^d(N / \epsilon)\big)$ invocations of $\oracle$. 
By choosing $N \approx \poly(\epsilon^{-1})$, we recover the solution operator of $\IK$ (in operator and Hilbert-Schmidt norm) to accuracy $\epsilon$ from solutions for $\mathcal{O}\big(\log^{1 + d}\left(\epsilon^{-1}\right)\big)$ right-hand sides.
Our algorithm has a straightforward extension that recovers the LU factorization of a non-self-adjoint operator from matrix-vector and matrix-transpose-vector products. 
We conjecture that theoretical results extend to LU factorizations of elliptic operators where only the leading order term is self-adjoint and positive.
Our results have implications for the following theoretical and practical questions.

\subsubsection{Operator learning} A growing body of work is concerned with learning solution operators of partial differential equations from data consisting of input-output pairs $\left(\rhsM, \omega(\rhsM)\right)$.
The proposed work achieves an exponential improvement in the sample complexity of provably accurate learning of linear elliptic solution operators. Furthermore, our complexity vs accuracy error analysis is performed in the strict worst-case setting rather than the more lenient average-case setting (which benefits from concentration of measure effects).

\subsubsection{Parallelization} A legacy solver $\oracle$ may not easily parallelize across large numbers of workers.
The matrix-vector product application of the sparse Cholesky factorization obtained by our approach is embarrassingly parallel.

\subsubsection{Reduced order modeling} \label{sec:reduced_order}Computing a rank-$k$ approximation of $\IKM^{-1}$ using the power method requires more than $k$ applications of $\oracle$. 
In contrast, we are able to compute rank-$k$ approximations at a cost of $\O\left(\log^{d + 1}(k)\right)$ applications of $\oracle$.

\subsubsection{Applying submatrices of \texorpdfstring{$\IK^{-1}$}{.}} \label{sec:submat} Many of the right-hand sides to which we apply the legacy solver may be sparse, or we may only be interested in some components of the solution $\solM$. 
For instance, we may be interested in efficiently individual entries of $\IKM^{-1}$ to compute the resistance metric associated with $\IKM$.
Computing only the diagonal entries of $\IKM^{-1}$ requires $N$ applications of $\oracle$, likely resulting in a computational cost of at least $N^2$. 
In contrast, our representation of $\IKM^{-1}$ allows computing individual entries of $\IKM^{-1}$ at complexity $\O\big(\log(N) \log^d(N / \epsilon)\big)$.
Thus, it can be compressed to a sparse factorization of $\IKM$ with $\O\big(N \log^d(N / \epsilon)\big)$ nonzero entries \cite{schafer2020sparse}.

\subsubsection{Gaussian process priors} Elliptic PDEs can be employed to define Gaussian \emph{smoothness priors}\footnote{We emphasize that our method is fully capable of dealing with PDEs with rough coefficients that have Green's functions of only finite orders of smoothness.} with covariance matrix $\IKM^{-1}$. 
To draw samples from this Gaussian process, we need to apply \emph{a square root} of $\IKM^{-1}$ to an i.i.d. Gaussian vector, and for tuning hyperparameters, we need the log-determinant of $\IKM^{-1}$.
The Cholesky factorization of $\IKM^{-1}$ that we compute allows performing these operations.

\subsection{Related work} Multiple approaches address some of the above desiderata.
The following review covers the most closely related examples from the vast literature on numerical methods for elliptic partial differential equations.
The proposed method learns solution operators from data instead of solving a PDE. 
We thus do not review fast solvers for elliptic problems, referring to \cite[Section 1.2]{schafer2021compression}.

\subsubsection*{Selected inversion and homogenization} The work of \cite{lin2009fast,lin2011selinv} on selected inversion aims to perform the tasks described in \cref{sec:submat} for inverses of arbitrary sparse matrices. 
Similarly, \emph{numerical homogenization} and related fields (see \cite{altmann2021numerical} for a survey) are concerned with computing reduced models for inverses of partial differential operators, thus addressing the task described in section \cref{sec:reduced_order}.
In contrast to our proposed approach, these methods require explicit knowledge of the partial differential operator or access to its matrix representation.
\subsubsection*{Learning operators with Neural Networks and operator-valued kernels}
The problem of \emph{learning} the solution operators of a partial differential equation from data (in the form of pairs of right-hand sides and solutions) provided by an existing solver has received considerable attention. 
Some works, such as \cite{fan2019bcr,fan2019multiscale,fan2019multiscaleH,li2020neural,li2020fourier,li2020multipole} use deep neural networks with architectures inspired by conventional fast solvers. Others are motivated by universal approximation theorems for operators \cite{lu2019deeponet}, or
employ random feature approximations of operator-valued kernels \cite{nelsen2021random}.
These methods can also be seen as regression methods with (possibly data-dependent) operator-valued kernels \cite{owhadi2020ideas}.
They can be applied to \emph{arbitrary} nonlinear operators, but their convergence (or lack thereof) is poorly understood in the nonlinear setting.
\subsubsection*{Learning linear operators} 
More closely related to the present work are methods providing theoretical guarantees for learning structured linear operators.
\cite{de2021convergence} approximates operators between infinite-dimensional Hilbert spaces from noisy measurements using a Bayesian approach and characterizes convergence (with randomized right-hand-sides), in terms of the spectral decay of the target operator (assuming the operator to be self-adjoint and diagonal in a basis shared with the Gaussian prior and noise covariance).
\cite{stepaniants2021learning} seeks to learn the Green's function of an elliptic PDE by empirical risk minimization over a reproducing kernel Hilbert space, characterizing convergence in terms of spectral decay.
The randomized algorithm of \cite{boulle2021learning} comes with rigorous bounds on the number of input-output measurements required to learn solution operators of linear elliptic PDEs. 
The convergence rates of these methods are limited by the spectral decay of the operator, and the number of required matrix-vector products scales polynomially with the target accuracy.
Alternatively, \cite{lin2011fast,martinsson2016compressing,ambartsumyan2020hierarchical,levitt2022randomized} target operators with hierarchical off-diagonal low-rank structure, allowing them to overcome the limitations of global low-rank structure. 
Finally, \cite{feischl2018sparse} compresses expectations of solution operators of stochastic PDEs from samples of the entire operator, and \cite{martinsson2008rapid} combines pairwise evaluations with matrix-vector products to construct hierarchical matrix approximations of elliptic solution operators.

\subsection{Our method in a nutshell}
Our method uses three ingredients to get away with a logarithmic (in both $N$ \emph{and} $\epsilon$) number of matrix-vector products.
\textbf{1:} \cite{schafer2021compression} proves that Cholesky factors of solution operators of elliptic PDEs are sparse with known sparsity pattern, up to exponentially small entries (\cref{fig:sparse_cholesky_factor}). \textbf{2:} Sparse matrices and Cholesky factorizations with leading columns having disjoint nonzero sets can be obtained from a single matrix-vector product with a cleverly chosen vector (top of \cref{fig:graph_coloring}).
\textbf{3:} After identifying the leading columns, we can subtract them from a Cholesky factorization to recover the later columns efficiently (bottom of \cref{fig:graph_coloring}). 

The Cholesky factors are only \emph{approximately} sparse. 
The main theoretical contribution of this work is a rigorous bound on the error propagation during the procedure outlined above, leading to the accuracy-vs-complexity results advertised in the abstract. 
This exponential accuracy is vastly better than what can be hoped for when using global low-rank approximations such as \cite{de2021convergence,boulle2021learning,stepaniants2021learning}. 
The principles underlying \cite{lin2011fast} are closely related to \textbf{2}, \textbf{3} above.
However, \cite{lin2011fast} does not provide rigorous bounds on the error propagation, and thus, their theoretical results do not cover elliptic solution operator \emph{in the wild}. 
Even ignoring this aspect, the improvements of sparse Cholesky factors over hierarchical matrices in approximating elliptic solution operators \cite[Sections 1.3 and 7.1]{schafer2021compression} result in asymptotically lower computational cost and fewer required matrix-vector products.
Experiments in \cref{sec:numerics} show that our method requires fewer matrix-vector products than reported by \cite{lin2011fast}.

\begin{figure}
    \centering
    \includegraphics[scale=0.675]{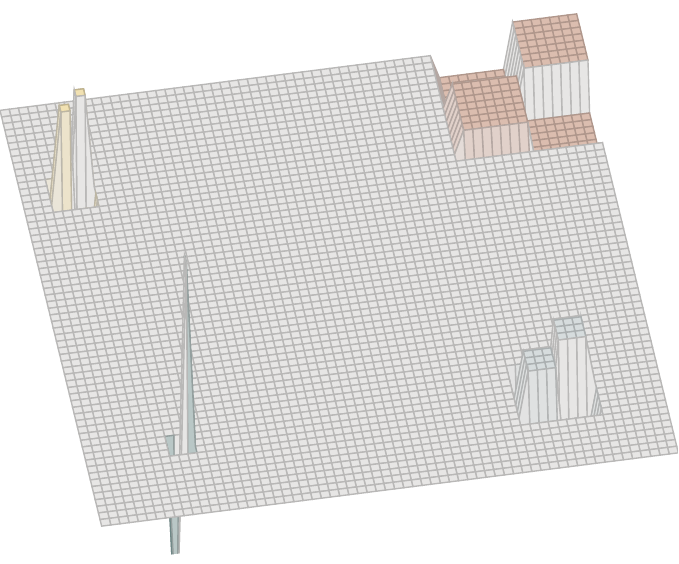}
    \includegraphics[scale=0.15]{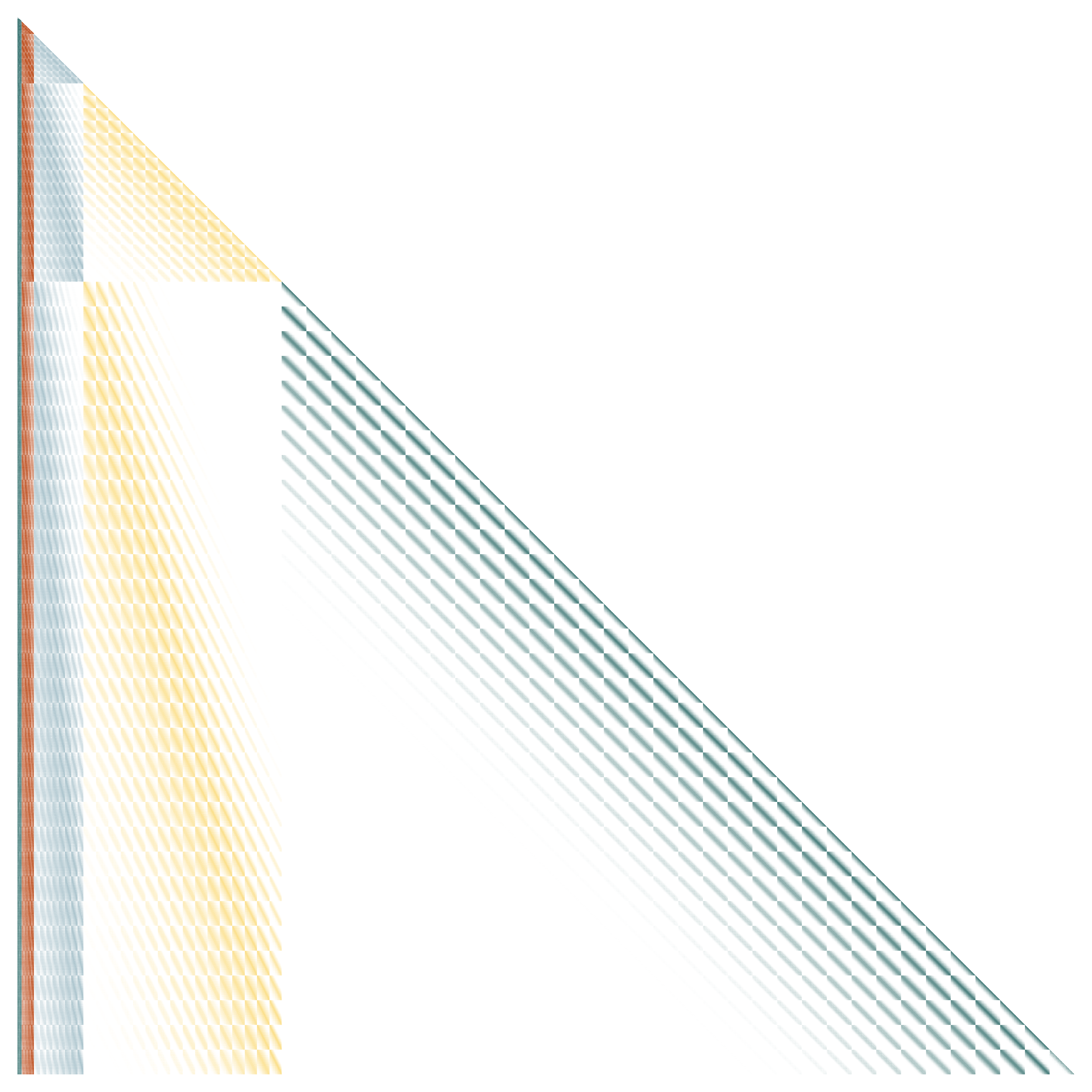}
    \hspace{-25pt}
    \includegraphics[scale=0.675]{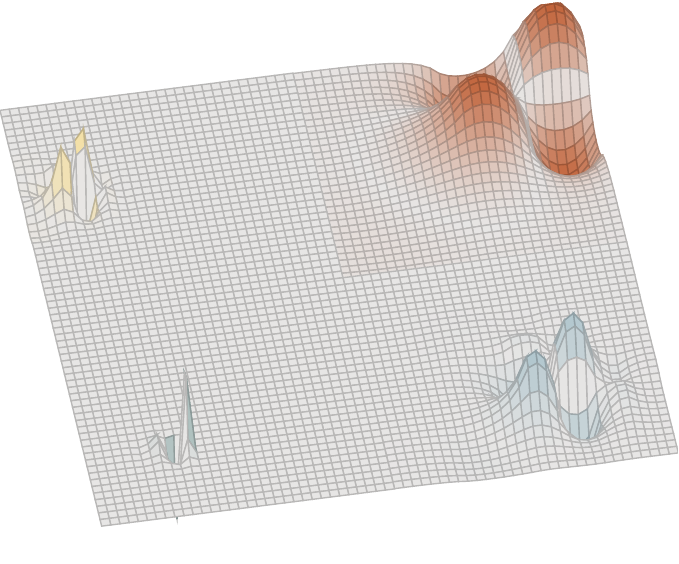}
  \caption{Left: Four examples basis functions of a Haar-type multiresolution basis, on four different levels. Center: Heatmap of Cholesky factors of $\KM$ expressed in the multiresolution basis with diagonal scaled to one. Magnitudes of entries shown in white are smaller than $10^{-10}$.
  The colors of the entries in each block column match that of the example basis function on that level. Right: Columns of Cholesky factors for basis functions on the left, interpreted as spatial functions.
  The near-sparsity of the Cholesky factors is shown in \cite{schafer2021compression}. Intuitively, it arises since Gaussian elimination of the earlier degrees (which are coarse-scale wavelets) removes the long-range interactions.}
  \label{fig:sparse_cholesky_factor}
\end{figure}

\begin{figure}
  \centering
  \includegraphics[scale=0.650]{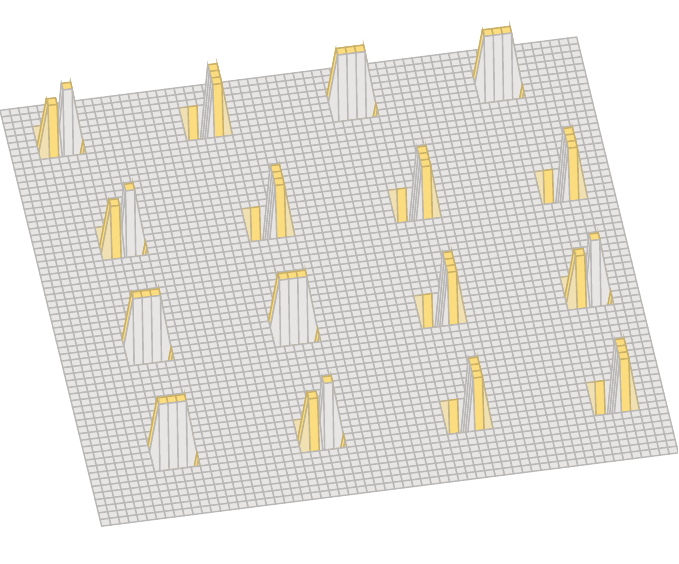}
  \includegraphics[scale=0.13]{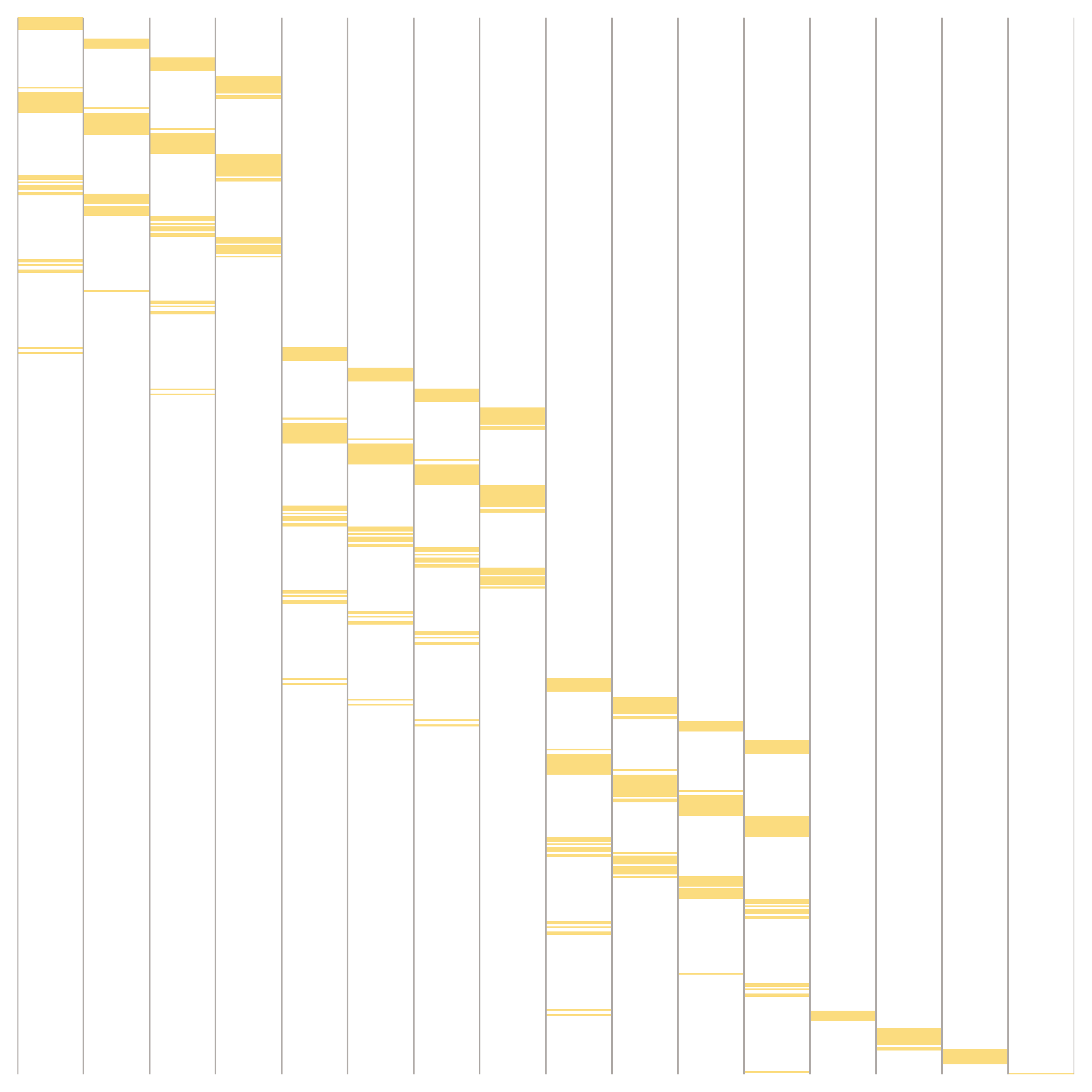}
  \includegraphics[scale=0.650]{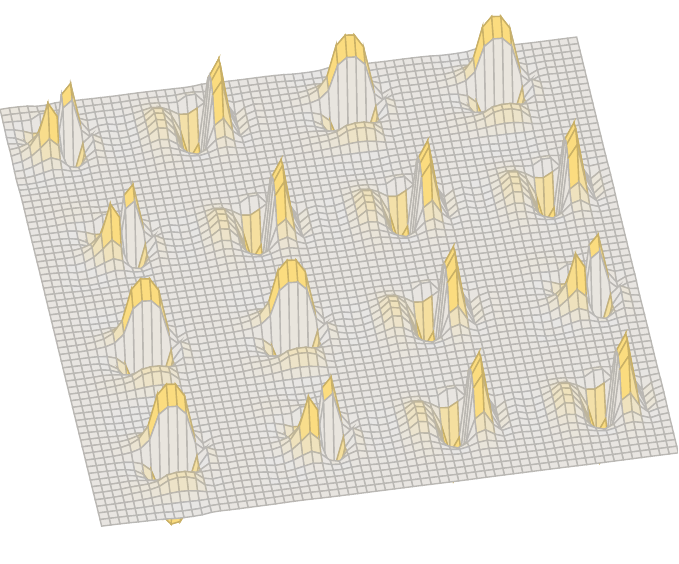}
  \caption{We illustrate an example right-hand side (left) and the resulting matrix-vector product (right) after subtracting coarse-scale contributions. 
  In the center, we illustrate part of the sparsity pattern (truncating entries smaller than $10^{-3}$) of the corresponding columns of the Cholesky factor.}
  \label{fig:multicolor_cholesky}
\end{figure}

\begin{figure}
  \centering
  \begin{minipage}{0.39 \textwidth}
    \centering
    \begin{tikzpicture}[scale=0.45]
\begin{scope}
	\begin{scope}[xscale=1.3, yscale=1.3]
		\draw (-3, 5) rectangle (0, 0);
		\draw (-3.0, 5.0) rectangle (-2.5, 0.00);
		\draw[fill=silver] (-3.0, 4.0) rectangle (-2.5, 3.5);
		\draw[fill=silver] (-3.0, 2.0) rectangle (-2.5, 0.0);

		\draw (-2.5, 5.0) rectangle (-2.0, 0.00);
		\draw[fill=orange] (-2.5, 5.0) rectangle (-2.0, 4.5);
		\draw[fill=orange] (-2.5, 2.5) rectangle (-2.0, 1.5);
		\draw[fill=orange] (-2.5, 1.0) rectangle (-2.0, 0.5);

		\draw (-2.0, 5.0) rectangle (-1.5, 0.00);
		\draw[fill=silver] (-2.0, 4.5) rectangle (-1.5, 3.0);
		\draw[fill=silver] (-2.0, 2.5) rectangle (-1.5, 1.5);
		\draw[fill=silver] (-2.0, 1.0) rectangle (-1.5, 0.5);

		\draw (-1.5, 5.0) rectangle (-1.0, 0.00);
		\draw[fill=rust] (-1.5, 4.5) rectangle (-1.0, 4.0);
		\draw[fill=rust] (-1.5, 3.0) rectangle (-1.0, 2.5);
		\draw[fill=rust] (-1.5, 0.5) rectangle (-1.0, 0.0);

		\draw (-1.0, 5.0) rectangle (-0.5, 0.00);
		\draw[fill=lightblue] (-1.0, 4.0) rectangle (-0.5, 3.0);
		\draw[fill=lightblue] (-1.0, 1.5) rectangle (-0.5, 1.0);

		\draw (-0.5, 5.0) rectangle (-0.0, 0.00);
		\draw[fill=silver] (-0.5, 5.0) rectangle (-0.0, 4.0);
		\draw[fill=silver] (-0.5, 3.5) rectangle (-0.0, 2.5);
		\draw[fill=silver] (-0.5, 2.0) rectangle (-0.0, 1.0);
	\end{scope}
\end{scope}

\begin{scope}[xscale=1.3, yscale=1.3]
\begin{scope}[xshift=-0.10cm]
\node at (1.0, 2.5) {\Huge \textcolor{darksilver}{$\cdot$}};
\end{scope}
\end{scope}

\begin{scope}[xscale=1.3, yscale=1.3]
\begin{scope}[xshift=-0.25cm, yshift=-0.0cm]
	\draw (2, 4) rectangle (2.5, 1);
	\node at (2.25, 3.75) {\small {$0$}};
	\draw (2, 4.0) rectangle (2.5, 3.5);
	\node at (2.25, 3.25) {\small {$1$}};
	\draw (2, 3.5) rectangle (2.5, 3.0);
	\node at (2.25, 2.75) {\small {$0$}};
	\draw (2, 2.5) rectangle (2.5, 2.0);
	\node at (2.25, 2.25) {\small {$1$}};
	\draw (2, 2.0) rectangle (2.5, 1.5);
	\node at (2.25, 1.75) {\small {$1$}};
	\draw (2, 1.5) rectangle (2.5, 1.0);
	\node at (2.25, 1.25) {\small {$0$}};
\end{scope}
\end{scope}

\begin{scope}[xscale=1.3, yscale=1.3]
\begin{scope}[xshift=-0.5cm,yshift=1.0]
\node at (4.0, 2.5) {\Huge \textcolor{darksilver}{$=$}};
\end{scope}
\end{scope}

\begin{scope}[xscale=1.3, yscale=1.3]
	\begin{scope}[xshift=-0.5cm]
		\draw (5.5, 5) rectangle (6.0, 0);

		\draw[fill=orange] (5.5, 5.0) rectangle (6.0, 4.5);
		\draw[fill=orange] (5.5, 2.5) rectangle (6.0, 1.5);
		\draw[fill=orange] (5.5, 1.0) rectangle (6.0, 0.5);

		\draw[fill=rust] (5.5, 4.5) rectangle (6.0, 4.0);
		\draw[fill=rust] (5.5, 3.0) rectangle (6.0, 2.5);
		\draw[fill=rust] (5.5, 0.5) rectangle (6.0, 0.0);

		\draw[fill=lightblue] (5.5, 4.0) rectangle (6.0, 3.0);
		\draw[fill=lightblue] (5.5, 1.5) rectangle (6.0, 1.0);
	\end{scope}
\end{scope}
	
    \end{tikzpicture} 
  \end{minipage}
  \hfill {\color{darksilver}\vline width 1.0mm} \hfill
  \begin{minipage}{0.49 \textwidth}
    \centering
    \begin{tikzpicture}[scale=0.45]
		  	\begin{scope}[xscale=1.3, yscale=1.3]
	\begin{scope}[yshift=-2.0cm, xshift=-1.0cm]
		\draw (-1.5, 3.5) rectangle (-1.0, 0.50);
		\draw[fill=orange, fill opacity=0.25] (-1.5, 3.5) rectangle (-1.0, 3.0);
		\node at (-1.25, 3.25) {\small {$1$}};
		\draw[fill=orange] (-1.5, 2.5) rectangle (-1.0, 1.5);

		\draw (-2.5, 4.5) rectangle (-2.0, 0.50);
		\draw[fill=rust, fill opacity=0.25] (-2.5, 4.5) rectangle (-2.0, 4.0);
		\node at (-2.25, 4.25) {\small {$1$}};
		\draw[fill=rust] (-2.5, 3.0) rectangle (-2.0, 2.5);
		\draw[fill=rust] (-2.5, 1.0) rectangle (-2.0, 0.5);

		\draw (-2.0, 4.0) rectangle (-1.5, 0.50);
		\draw[fill=lightblue, fill opacity=0.25] (-2.0, 4.0) rectangle (-1.5, 3.5);
		\node at (-1.75, 3.75) {\small {$1$}};
		\draw[fill=lightblue] (-2.0, 1.5) rectangle (-1.5, 1.0);

		\draw[fill=silver] (-0.5, 3.0) rectangle (-1.0, 0.5);
	\end{scope}
\end{scope}

\begin{scope}[xscale=1.3, yscale=1.3]
	\begin{scope}[xshift=2.0cm, yshift=1.0cm, yscale=-1, rotate=90]

		\draw (-1.5, 3.5) rectangle (-1.0, 0.50);
		\draw[fill=orange, fill opacity=0.25] (-1.5, 3.5) rectangle (-1.0, 3.0);
		\node at (-1.25, 3.25) {\small {$1$}};
		\draw[fill=orange] (-1.5, 2.5) rectangle (-1.0, 1.5);

		\draw (-2.5, 4.5) rectangle (-2.0, 0.50);
		\draw[fill=rust, fill opacity=0.25] (-2.5, 4.5) rectangle (-2.0, 4.0);
		\node at (-2.25, 4.25) {\small {$1$}};
		\draw[fill=rust] (-2.5, 3.0) rectangle (-2.0, 2.5);
		\draw[fill=rust] (-2.5, 1.0) rectangle (-2.0, 0.5);

		\draw (-2.0, 4.0) rectangle (-1.5, 0.50);
		\draw[fill=lightblue, fill opacity=0.25] (-2.0, 4.0) rectangle (-1.5, 3.5);
		\node at (-1.75, 3.75) {\small {$1$}};
		\draw[fill=lightblue] (-2.0, 1.5) rectangle (-1.5, 1.0);

		\draw[fill=silver] (-0.5, 3.0) rectangle (-1.0, 0.5);
	\end{scope}
\end{scope}

\begin{scope}[xscale=1.3, yscale=1.3]
\begin{scope}[xshift=1.0cm, yshift=-1.50cm]
\node at (1.0, 2.5) {\Huge \textcolor{darksilver}{$\cdot$}};
\end{scope}
\end{scope}

\begin{scope}[xscale=1.3, yscale=1.3]
\begin{scope}[xshift=1.0cm, yshift=-0.5cm]
	\draw (2, 3.5) rectangle (2.5, 0.5);
	\node at (2.25, 3.25) {\small {$1$}};
	\draw (2, 3.5) rectangle (2.5, 3.0);
	\node at (2.25, 2.75) {\small {$1$}};
	\draw (2, 3.0) rectangle (2.5, 2.5);
	\node at (2.25, 2.25) {\small {$1$}};
	\draw (2, 2.5) rectangle (2.5, 2.0);

	\draw (2, 2.0) rectangle (2.5, 1.5);
	\node at (2.25, 1.75) {\small {$0$}};

	\draw (2, 1.5) rectangle (2.5, 1.0);
	\node at (2.25, 1.25) {\small {$0$}};

	\draw (2, 1.0) rectangle (2.5, 0.5);
	\node at (2.25, 0.75) {\small {$0$}};

	\draw (2, 0.5) rectangle (2.5, 0.0);
	\node at (2.25, 0.25) {\small {$0$}};

	\draw (2, 0.0) rectangle (2.5, -0.5);
	\node at (2.25, -0.25) {\small {$0$}};
\end{scope}
\end{scope}

\begin{scope}[xscale=1.3, yscale=1.3]
	\begin{scope}[xshift=1.0cm,yshift=-1.5cm]
		\node at (4.0, 2.5) {\Huge \textcolor{darksilver}{$=$}};
	\end{scope}
\end{scope}

\begin{scope}[xscale=1.3, yscale=1.3]
	\begin{scope}[xshift=1.0cm, yshift=-1.50cm]
		\draw (5.5, 4.5) rectangle (6.0, 1.0);

		\draw[fill=orange, fill opacity=0.25] (5.5, 3.5) rectangle (6.0, 3.0);
		\node at (5.75, 3.25) {\small {$1$}};
		\draw[fill=orange] (5.5, 2.5) rectangle (6.0, 1.5);

		\draw[fill=rust, fill opacity=0.25] (5.5, 4.5) rectangle (6.0, 4.0);
		\node at (5.75, 4.25) {\small {$1$}};
		\draw[fill=rust] (5.5, 3.0) rectangle (6.0, 2.5);
		\draw[fill=rust] (5.5, 1.0) rectangle (6.0, 0.5);

		\draw[fill=lightblue, fill opacity=0.25] (5.5, 4.0) rectangle (6.0, 3.5);
		\node at (5.75, 3.75) {\small {$1$}};
		\draw[fill=lightblue] (5.5, 1.5) rectangle (6.0, 1.0);
	\end{scope}
\end{scope}

%
%
%
%
%
    \end{tikzpicture} 
  \end{minipage}
  \vfill
  \vspace{0.50cm}
  \vfill
  \begin{minipage}{1.00 \textwidth}
    \centering
    \begin{tikzpicture}[scale=0.45]
      \input{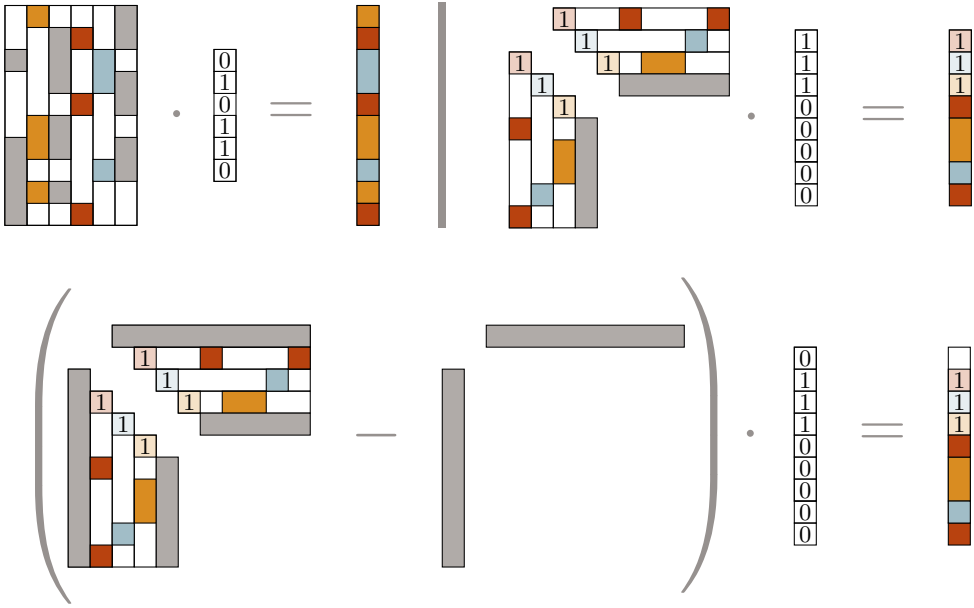}
    \end{tikzpicture}
  \end{minipage}
  \caption{\textbf{Top Left:} Columns (shown in orange, red, and blue) with disjoint and known sparsity patterns can be recovered by a single matrix-vector product with a carefully chosen vector. \\
  \textbf{Top Right:} Cholesky factorizations with disjoint leading-column sparsity patterns can be recovered in the same way. Unit diagonal entries simplify the presentation but are \underline{not necessary} for recovery.\\
  \textbf{Bottom:} If denser columns of the Cholesky factorization prevent recovery of sparser ones, we identify dense columns first and subtract their contribution to efficiently recover the sparser columns. \label{fig:graph_coloring}}
\end{figure}

\section{Our method}

\subsection{Summary} We now explain how to apply our method to inverses of an elliptic partial differential operator $\IK$ or its discretization $\IKM$, accessible through input-output maps $\rhs \mapsto \IK^{-1}\rhs$ or $\rhsM \mapsto \IKM^{-1}\rhsM$. 
Our method consists of four steps.

\begin{enumerate}[label=\textbf{(\Roman*)}]
  \item We construct a Haar-type multiresolution basis $\{w_{i}\}_{i \in \I}$ (resp. $\{\dw_{i}\}_{i \in \I}$)  indexed by a set  $\I$ of size $|\I| =N$, such that the Cholesky factor of $\KM \in \Reals^{\I \times \I}$ defined as $\KM_{ij} \coloneqq \int w_{i} \IK^{-1} w_{j}$ (resp. $\dw_{i}^{\top} \IKM^{-1} \dw_{j}$) is approximately sparse. \label{item:summary_multires}
  \item We choose a tuning parameter $\rho > 0$ and color the multiresolution basis functions such that two bases share the same color only if they are of the same scale $h^k$ and their supports are separated by a distance of at least $2 \rho h^{k-1}$. 
    We denote as $\colors$ the resulting partition of the multiresolution basis in different colors $c \in \colors$. 
    Therefore, each $c \in \colors$ is a subset of $\I$ consisting of those elements of $\I$ that are assigned to this color.
    The larger we choose $\rho$, the more accurate (and expensive) our approximation is. \label{item:summary_coloring}
  \item We construct a measurement matrix\footnote{Here and in the following, $\Reals^{\I}$ and $\Reals^{\I \times \J}$ denote vectors and matrices indexed by sets $\I$ and $\J$.} $\dM \in \Reals^{\I \times \colors}$ with columns $\dM_{:,c}$ given by sums of standard basis vectors  $\left\{\de_i\right\}_{i \in c}$  of a given color $c$.\footnote{The standard basis vector $\de_i$ has a $1$ as its $i$-th entry, with all other entries being $0$.} 
    We then construct the observation matrix $\dO \in \Reals^{\I \times \colors}$ containing the solutions of the PDE for right-hand sides given by columns of $\dM$, defining $\dO_{:,c} \coloneqq \KM \dM_{:,c}$. 
    This matrix-vector products amounts to a single call of $\oracle$.
    Since the $\left\{\KM \de_i\right\}_{i \in c}$ have nearly disjoint supports after removing contributions from coarser scales (see \cref{fig:multicolor_cholesky}), this strategy allows gathering information contained in multiple solutions (corresponding to the number of elements of $c$) from just one application of $\oracle$ (one matrix-vector product). 
    
    \label{item:summary_measurement}
  \item We use \cref{alg:Cholesky} to construct a sparse approximate Cholesky factorization of $\KM$, the operator $\IK^{-1}$ (resp. $\IKM^{-1}$) represented in the basis given by $\{w_{i}\}_{i \in \I}$ (resp. $\{\dw_{i}\}_{i \in \I}$). \label{item:summary_recovery}
\end{enumerate}

\subsection{\ref{item:summary_multires}: Multiresolution basis} 
\label{sec:multires}
We begin by constructing a multiresolution basis using a Haar-type orthogonalization scheme, as illustrated in \cref{fig:basis_functions_haar,fig:basis_functions}. A detailed description of this construction can be found in  \cite[Sec.~5.3, 5.10]{owhadi2019operator}.

We assume that the computational domain $\Omega$ is represented by a quasi-uniform mesh with mesh-width $h_{\min}$.
Our method can be applied to a wide range of discretizations, including finite difference, finite elements, and finite volumes, as long as they are local. 
By this, we mean that the region of influence of each degree of freedom, such as the support of the associated basis function, has diameter $\lessapprox h_{\min}$.

\subsubsection*{Nested partition}
Assuming that the domain $\Omega \in \Reals^d$ has diameter one, we choose $1/2 \approx h \in (0,1)$ and $q \in \mathbb{N}_+$ such that $h^q \approx h_{\min}$.
For $1 \leq k \leq q$, we partition $\Omega$ into subsets $\tau^{(k)}_i$ (indexed by $i\in J^{(k)}$) of diameter $\leq h^k$, and that are nested, in the sense that each  $\tau^{(k)}_i$ is the union of elements $\tau^{(k+1)}_j$ of the finer partition. We write $\tau^{(k)}:=\{\tau^{(k)}_i\mid i\in J^{(k)}\}$.
We further let $\tau^{(0)} \coloneqq \{\Omega\}$ be the trivial partition.
As far as possible, we choose the $\tau^{(k)}_i$ to be convex with small aspect ratios (the aspect ratio of $\tau^{(k)}_i$ is defined as the ratio between the radius of the smallest ball containing that set and the radius of the largest ball contained in that set).

\begin{figure}
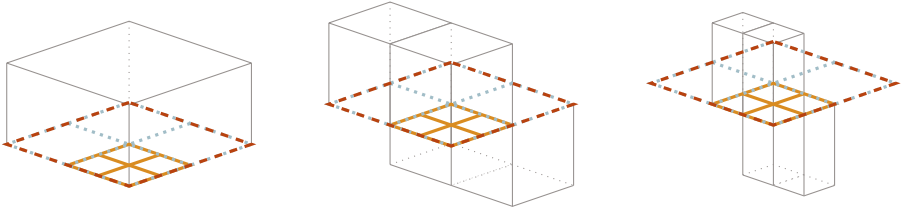

  \centering
  \begin{minipage}{0.32 \textwidth}
    \centering
    \tdplotsetmaincoords{70}{45}
    \begin{tikzpicture}[scale=1.15, tdplot_main_coords]
		  	\input{figures/tikz/basis_functions_haar_c.tex}
    \end{tikzpicture} 
  \end{minipage}
  \begin{minipage}{0.32 \textwidth}
    \centering
    \tdplotsetmaincoords{70}{45}
    \begin{tikzpicture}[scale=1.15, tdplot_main_coords]
		  	\input{figures/tikz/basis_functions_haar_m.tex}
    \end{tikzpicture} 
  \end{minipage}
  \begin{minipage}{0.32 \textwidth}
    \centering
    \tdplotsetmaincoords{70}{45}
    \begin{tikzpicture}[scale=1.15, tdplot_main_coords]
		  	\input{figures/tikz/basis_functions_haar_f.tex}
    \end{tikzpicture} 
  \end{minipage}
  \caption{Elements of {\color{rust} $\tau^{(1)}$}, {\color{lightblue} $\tau^{(2)}$}, and {\color{orange} $\tau^{(3)}$} and idealized basis functions $\cw$ on the first (left), second (center), and third (right) level of the hierarchy.}
  \label{fig:basis_functions_haar}
\end{figure}

\subsubsection*{Basis functions (continuum)} We begin with an idealized description of the multiresolution basis functions at the continuum level, under the assumption that we have access to the continuous solution map $\rhs \mapsto \IK^{-1} \rhs$.
For $t \in \tau^{(k)}$ an element of any of the $q$ partitions, we denote as $\cv_t$ the function that is $1$ on $t$ and zero everywhere else. 
We denote as $\cV^{(k)}$ the linear span of the functions in $\left\{\cv_{t}\right\}_{t \in \tau^(k)}$ (i.e., $\cV^{(k)}$ is the space of functions that are zero outside of $\Omega$ and (piecewise) constant in each $\tau^{(k)}_i$).
For each $1 \leq k \leq q$ we denote as $\cW^{(k)}$ the $L^2$-orthogonal complement of $\cV^{(k - 1)}$ in $\cV^{(k)}$, treating $\cV^{(0)}$ as $\{0\}$ (i.e., $\cW^{(k)}$ is the space of functions that are zero outside of $\Omega$, (piecewise) constant in each $\tau^{(k)}_i$, and of average zero in each $\tau^{(k-1)}_j$). 
We choose an $L^2$-orthonormal basis $(\cw_{i})_{i \in \I^{(k)}}$ for each $W^{(k)}$ that is local in the sense that the support of every $w_{i}$ is contained in (the closure) of an element of $\tau^{(k-1)}$. 
Abusing notation we write $\I \coloneqq \cup_{1 \leq k \leq q} \I^{(k)}$ and we consider the basis set $\left\{w_{i}\right\}_{i \in \I}$ as forming the columns of an infinitely long matrix denoted by $W^{(k)}$. 
An example of the resulting multiresolution basis is shown in \cref{fig:basis_functions_haar}.

\begin{figure}
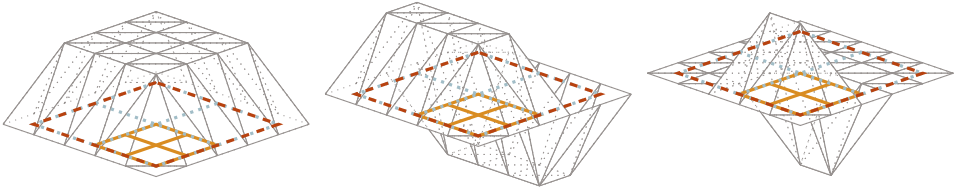

  \centering
  \begin{minipage}{0.32 \textwidth}
    \centering
    \tdplotsetmaincoords{70}{45}
    \begin{tikzpicture}[scale=1.15, tdplot_main_coords]
		  	\input{figures/tikz/basis_functions_c.tex}
    \end{tikzpicture} 
  \end{minipage}
  \begin{minipage}{0.32 \textwidth}
    \centering
    \tdplotsetmaincoords{70}{45}
    \begin{tikzpicture}[scale=1.15, tdplot_main_coords]
		  	\input{figures/tikz/basis_functions_m.tex}
    \end{tikzpicture} 
  \end{minipage}
  \begin{minipage}{0.32 \textwidth}
    \centering
    \tdplotsetmaincoords{70}{45}
    \begin{tikzpicture}[scale=1.15, tdplot_main_coords]
		  	\input{figures/tikz/basis_functions_f.tex}
    \end{tikzpicture} 
  \end{minipage}
  \caption{Elements of {\color{rust} $\tau^{(1)}$}, {\color{lightblue} $\tau^{(2)}$}, and {\color{orange} $\tau^{(3)}$} and basis functions $\dw$ on the first (left), second (center), and third (right) level of the hierarchy, on a piecewise affine finite element discretization.}
  \label{fig:basis_functions}
\end{figure}

\subsubsection*{Basis functions (discrete)}
We now describe how to adapt the above construction to working with the solver of a discretized elliptic PDE $\IKM$.
On a first reading, this somewhat technical section can be skipped.
Let $N$ be the number of rows and columns of $\IKM$. 
For each $1 \leq k \leq q$, $t \in \tau^{(k)}$, $i \in \I^{(k)}$, we need to construct discrete counterparts $\dv_{t}, \dw_{i} \in \Reals^N$ that mimic the behavior of $\cv_t, \cw_i$.
We outline a general approach, the specifics of which depend on the discretization. To illustrate this approach we assume that $\IKM$ is obtained by discretizing the continuous operator $\IK$ with a finite element method with elements $\psi_i$ indexed by $1 \leq i \leq N$, i.e. $\IKM_{i,j}=\int_\Omega \psi_i \IK^{-1} \psi_j$. We assume each $\psi_i$ to be supported in a subset of $\Omega$ of size $\mathcal{O}(h^q)$. We emphasize that the proposed approach does not require the precise knowledge of $\IKM$ or the finite elements $\psi_i$ used for its discretization, it only requires a rough identification of the location of the support of those elements.

For each $1 \leq k \leq q$, we construct a nested partition $\dtau^{(k)}$ of the $N$ discrete degrees of freedom that mirrors the partition $\tau^{(k)}$ of the domain.
We begin by assigning to each $t \in \tau^{(q)}$ the discrete counterpart $\dt(t) \coloneqq \{i\}$, where $i$ is chosen such that the support of $\psi_i$ is as close as possible to $t$, while ensuring that each $i$ is assigned to a unique $t \in \tau^{(q)}$. 
Working our way from finer to coarser levels, we then define a discrete counterpart $\dt(t)$ for each $t \in \tau^{(k)}$ and $1 \leq k < q$.
For each $1 \leq k < q$, and $t \in \tau^{(k - 1)}$ the union over elements of $\mathcal{N} \subset \tau^{(k)}$, we define $\dt(t) \coloneqq \cup_{s \in \mathcal{N}} \dt(s)$.
In other words, the discrete counterparts $\dt$ mimic the inclusion relationships among the associated $t$.
For $1 \leq k \leq q$, the $\dtau^{(k)} \coloneqq \{\dt(t)| t \in \tau^{(k)}\}$ then define the partitions of $\{1, \ldots N\}$.
For each $1 \leq k \leq q$ and $t \in \tau^{(k)}$, we approximate $\cv_t$ using degrees of freedom in the associated $\dt \in \dtau^{(k)}$. 
The resulting vector of coefficients is denoted as $\dv_t \in \Reals^N$ and the linear space spanned by the $(\dv_{t})_{t \in \tau^{(k)}}$ as $\dV^{(k)}$.

By constructing them from fine to coarse scales, we can ensure the $\dV^{(1)} \subset \dots \subset \dV^{(q)}$ are nested.
For $1 \leq k \leq q$, we then construct the $(\dw_{i})_{i \in I^{(k)}}$ as an orthonormal basis of the orthogonal complement of $\dV^{(k - 1)}$ in $\dV^{(k)}$ (where orthogonality and orthonormality are defined with respect to the Euclidean inner product), setting $\dV^{(0)} \coloneqq \{0\}$. 
As in the case of the $\cw_i$, we choose the $\dw_i$ to be \emph{local} in the sense that for $i \in \I^{(k)}$, the support of $\dw_i$ is restricted to a single $\dt \in \dtau^{(k - 1)}$.
Abusing notations we write $\I \coloneqq \cup_{1 \leq k \leq q} \I^{(k)}$ and denote as $\dW \in \Reals^{N \times \I}$ the orthonormal matrix that has the $(\dw_{i})_{i \in \I}$ as its columns, ordered from coarse to fine.
\cref{fig:basis_functions} illustrates this construction in the case of piecewise linear finite elements on a triangular mesh.

\begin{remark}
  If the order $2s$ of $\IK$ is larger than $d$, we can instead use a multiresolution subsampling scheme such as \cite[Example 5.1]{schafer2021compression}.
\end{remark}

\subsection{\ref{item:summary_coloring}: Coloring the basis functions}
\label{sec:coloring}
We now \emph{color} the basis functions, meaning that we create a partition $\colors$ of $\I$ such that any $i,j \in \I$ assigned to the same color $c \in \colors$ must be on the same level of the multiresolution basis and correspond to sufficiently distant basis functions.
To make this precise, we denote for $i \in \I^{(k)}$ as $t(\cw_i)$ the element of $\tau^{(k - 1)}$ that contains the support $\cw_i$.
Similarly, we define as $t(\dw_{i})$ the element of the partition $\tau^{(k - 1)}$ that is associated to the $\dt \in \dtau^{(k - 1)}$ that contains the support of $\dw_i$. 
We choose a tuning parameter $\rho > 0$ and color the set $\I$ such that
\begin{equation*}
  i \in \I^{(k)} \ \mathrm{and} \ j \in \I^{(l)} \ \text{of same color} \quad \Rightarrow \quad 
    k = l \ \mathrm{and} \, 
  \begin{cases} 
    \dist(t(\cw_i), t(\cw_j)) \geq 2 \rho h^{k - 1}\\
    \dist(t(\dw_i), t(\dw_j)) \geq 2 \rho h^{k - 1},   
  \end{cases}
\end{equation*}
where $\dist(\cdot, \cdot)$ is the ordinary Euclidean distance between subsets of $\Omega \subset \Reals^d$.
We can construct such a coloring by successively adding admissible elements of $\I$ to a given color until we run out of elements to add, adding new colors until all of $\I$ is colored.
We reorder the columns of $\dW$, such that those of the same color appear consecutively. 

\subsection{\ref{item:summary_measurement}: Taking measurements}
\label{sec:measurements}
So far, we have not used $\IK$, $\IKM$, or their inverses.
We use the coloring $\colors$ to decide, which solutions $\oracle(\rhs) \coloneqq \IK^{-1} \rhs$, $\oracle(\rhsM) \coloneqq \IKM^{-1} \rhsM$ to compute. 
We write $\de_i$ for the unit vector in the $i$-th direction, and denote as $\KM$ the matrix with entries $\KM_{ij} = \left \langle \cw_{i}, \IK^{-1} \cw_{j}\right \rangle_{L^2}$ (in the continuous case) or $\KM_{ij} = \dw_{i}^{\top} \IKM^{-1} \dw_{j}$ (in the discrete case).
We now define $\dM\in \Reals^{N \times \colors}$ and $\dO \in \Reals^{\I \times \colors}$ column-wise as 
\begin{equation*}
\dM_{:,c} \coloneqq \dm_{c} \coloneqq \sum_{i \in c} \de_i \quad \text{and} \quad \dO_{:,c} \coloneqq \disco_{c} \coloneqq \KM \dm_c.
\end{equation*}
Each matrix-vector product with $\KM$ requires a single call to the black box $\oracle$.
The matrix $\dO$ contains all numerical information about $\IKM$ or $\IK$ that we will use. 

\subsection{\ref{item:summary_recovery}: Computing the Cholesky factorization}
Recall that the elements of $\I$ were ordered from coarse to fine, with the elements of each color appearing consecutively. 
For $i,j \in \I$, we write $i \preceq j$ if $i$ appears before $j$ in this ordering.
We now construct an approximate Cholesky factorization of $\KM$. 
For a color $c \in \colors$ we define the operation $\texttt{scatter}_c(\solM)$ that takes in a vector $\solM \in \Reals^{\I}$ and splits it into a sparse matrix in $\Reals^{\I \times c}$ (replacing $w$ with $\dw$ in the discrete case),
\begin{figure}
  \begin{minipage}{0.38 \textwidth}
    \centering
    \includegraphics[scale=0.675]{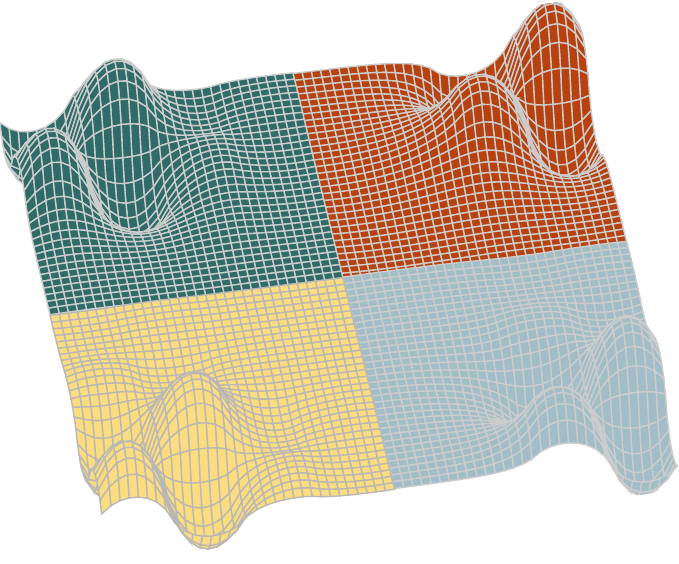}
  \end{minipage}
  \begin{minipage}{0.60 \textwidth}
    \centering
    \begin{tikzpicture}[scale=0.45]
		  	\begin{scope}
	\begin{scope}[xscale=1.3, yscale=1.3, xshift=-250]
		\draw (-0.5, 6) rectangle (0, 0);
 		\draw[fill=seagreen] (-0.5, 6.0) rectangle (0.0, 4.5);
 		\draw[fill=rust] (-0.5, 4.5) rectangle (0.0, 3.0);
 		\draw[fill=lightblue] (-0.5, 3.0) rectangle (-0.0, 1.5);
 		\draw[fill=joshua] (-0.5, 1.5) rectangle (-0.0, 0.0);
    \end{scope}

	\begin{scope}[xscale=1.3, yscale=1.3, xshift=-150]
        \node at (0.0, 3.0) {\Huge \textcolor{darksilver}{$\xmapsto{\mathtt{scatter}}$}};
    \end{scope}

	\begin{scope}[xscale=1.3, yscale=1.3]
		\draw (-2, 6) rectangle (0, 0);

%
 		\draw (-2.0, 6.0) rectangle (-1.5, 0.00);
 		\draw[fill=seagreen] (-2.0, 6.0) rectangle (-1.5, 4.5);
 
 		\draw (-1.5, 6.0) rectangle (-1.0, 0.00);
 		\draw[fill=rust] (-1.5, 4.5) rectangle (-1.0, 3.0);
 
 		\draw (-1.0, 6.0) rectangle (-0.5, 0.00);
 		\draw[fill=lightblue] (-1.0, 3.0) rectangle (-0.5, 1.5);
 
 		\draw (-0.5, 6.0) rectangle (-0.0, 0.00);
 		\draw[fill=joshua] (-0.5, 1.5) rectangle (-0.0, 0.0);
	\end{scope}
\end{scope}
    \end{tikzpicture} 
  \end{minipage}
  \caption{$\dO_{:, c} - \dL \dL^{\top} \dM_{:, c}$ is first obtained as a dense vector (left). 
  It is scattered into four sparse vectors containing the nonzeros corresponding to the $\dw_{i}$ closest to each of the $\{\dw_{\iota}\}_{\iota \in c}$ (right).} 
  \label{fig:scatter}
\end{figure}
\begin{equation*}
  \left(\texttt{scatter}_{c}\left(\solM\right)\right)_{ij} \coloneqq 
  \begin{cases}
    &\solM_{i}, \ \mathrm{if} \ j \preceq i \ \mathrm{and}  \ j = \argmin_{\iota \in c} \dist\left(t(w_i), t(w_\iota)\right)\\
    &0, \ \mathrm{else}.
  \end{cases}.
\end{equation*}
Here, we use an arbitrary method to break ties, ensuring that the $\argmin$ has a unique solution.
We write $\{\boldsymbol{\mu}_{j}\}_{j \in c}$ for the columns of the (sparse) matrix returned by $\texttt{scatter}_c$.
The method $\texttt{scatter}_{c}$ thus assigns the $i$-th entry of $\solM$ to the column $\boldsymbol{\mu}_{j}$ for which $\dw_{j}$ is closest to $\dw_{i}$.  
This is illustrated in \cref{fig:scatter}.
We write $\diag(\mathbf{B})$ for the diagonal of a (possibly non-square) matrix $\mathbf{B}$ and $\texttt{hcat}(\cdot, \cdot)$ for the function that takes two (sparse) matrices with equal number of rows and horizontally concatenates them.
With these definitions, the Cholesky factorization is computed by \cref{alg:Cholesky}.

\begin{algorithm}[H]
    \begin{algorithmic}[1]
        \STATE $\dL \leftarrow 0 \times \I$ empty matrix 
        \FOR{$c \in \colors$}
        \label{line:for_loop_cholesky}
        \STATE $\dL_{\mathrm{new}} \leftarrow \texttt{scatter}_c\left(  \dO_{:, c} - \dL \dL^{\top} \dM_{:, c}\right)$
        \STATE $\dL_{\mathrm{new}} \leftarrow \dL_{\mathrm{new}} \left(\diag\left(\dL_{\mathrm{new}}\right)\right)^{-1/2}$ 
        \STATE $\dL \leftarrow \texttt{hcat}\left(\dL, \dL_{\mathrm{new}}\right)$
        \ENDFOR
  			\RETURN $\dL$
	\end{algorithmic}
	\caption{\label{alg:Cholesky} Cholesky recovery}
\end{algorithm}

\begin{remark}
  Cholesky recovery can easily be extended to an $LU$ recovery that applies to nonsymmetric and indefinite PDEs, given access to matrix-transpose-vector products. 
  We conjecture that our approximation results are robust to perturbations of the PDE by possibly nonsymmetric or indefinite lower-order terms with bounded coefficients.
  Examples are advection-diffusion equations with bounded Pecl{\'e}t number and Helmholtz equations with bounded frequency.
\end{remark}

\newpage 
\section{Theoretical results}
\subsection{Setting for rigorous results}
\label{sec:rigorous_setting}
In a slightly more idealized setting, we can prove the accuracy of the factorization $\dL$ rigorously. 
We assume that $\Omega \subset \Reals^d$ is a Lipschitz-bounded domain and $\IK :H_0^s(\Omega) \longrightarrow H^{-s}(\Omega)$ is linear, bounded, invertible, self-adjoint, and local in the sense that $\int_\Omega u \IK v \dx = 0$ whenever $u,v \in H^s_0(\Omega)$ have disjoint support. 
We further assume that the $\{\tau^{(k)}\}_{1 \leq k \leq q}$ form nested partitions of $\Omega$ into convex and uniformly Lipschitz sets such that for $h,\delta \in (0, 1)$, each element of $\tau^{(k)}$ is contained in a ball of radius $h^k$ and contains a ball of radius $\delta h^k$. 
We then consider the idealized continuous multiresolution basis $\left\{w_{i}\right\}_{i \in \I}$ of \cref{sec:multires} and assume that we have access to matrix-vector multiplication with the resulting $\KM$. 
This setting corresponds to \cite[Example 2]{schafer2021compression}. Using the results therein, we prove rigorous results on the accuracy-vs-complexity tradeoff of our methods.
The requirement for the elements of the $\left\{\tau^{(k)}\right\}_{1 \leq k \leq q}$ to be convex arises only from the use Poincar{\'e} inequalities and Bramble-Hilbert lemmas for convex domains in \cite{schafer2021compression}.
Similar results could be obtained by using results for star-shaped domains or by assuming that the elements of $\left\{\tau^{(k)}\right\}_{1 \leq k \leq q}$ are obtained by intersecting $\Omega$ with a convex set. 

\subsection{Results for the simplicial case}
We begin by presenting a theoretical result for \cref{alg:Cholesky}, which we will refer to as \emph{simplicial} Cholesky recovery. 
In this setting, the accuracy-vs-complexity tradeoff of our method is characterized by the following theorem.

\begin{theorem}
  \label{thm:main_thm_simplicial}
  By choosing $\rho \gtrapprox \log(N / \epsilon)$, we    obtain a measurement matrix $\dM$ with $\O\big(\log(N) \log^{d}(N/\epsilon)\big)$ columns such that the $\dL$ produced by \cref{alg:Cholesky} applied to a suitable $\epsilon$-perturbation of $\KM$ satisfies $\dL \dL^{\top} = \KM$.
  By this, we mean that there exists a matrix $E \in \Reals^{N \times N}$ with $\|E\| \leq \epsilon$, such that the application of \cref{alg:Cholesky} to $\KM + E$ returns the Cholesky factor of $\KM$.
  It requires $\O\big(\log(N) \log^{d}(N/\epsilon)\big)$ matrix-vector products with $\KM$, as well as computational cost $\O\big(N \log^2(N) \log^{2d}(N/\epsilon)\big)$ and the result $\dL$ has $\O\big(N \log(N) \log^{d}(N/\epsilon)\big)$ nonzero entries.
  The hidden constants in $\gtrapprox, \O$ above depend on $d$, $\Omega$, $\|\IK\|$, $\|\IK^{-1}\|$, $\delta$, and $h$, but not on $q$, $N$, or $\epsilon$.
\end{theorem}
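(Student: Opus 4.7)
The plan is to realize the algorithm's output $\dL$ as the truncation of the exact Cholesky factor $\tilde\dL$ of $\KM$ to the ``basin'' sparsity pattern produced by $\texttt{scatter}$, and to bound the associated backward error via the exponential decay of $\tilde\dL$ established in \cite{schafer2021compression}. The proof reduces to three tasks: a geometric count of colors, an exact-recovery claim for \cref{alg:Cholesky} on basin-sparse Cholesky factorizations, and a perturbation estimate converting decay to operator norm error.

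For the color count, at level $k$ the conflict graph on $\I^{(k)}$ has degree $\O(\rho^d)$, since the number of level-$(k-1)$ cells within distance $2\rho h^{k-1}$ of a given one is $\O(\rho^d)$ by a volume argument using the uniform Lipschitz and $\delta h^{k-1}$-inner-ball assumptions. Greedy coloring therefore uses $\O(\rho^d)$ colors per level; multiplying by the $q=\O(\log N)$ levels and taking $\rho=\Theta(\log(N/\epsilon))$ yields $|\colors|=\O(\log N\,\log^d(N/\epsilon))$. For the perturbation step, I would import $|\tilde\dL_{ij}|\lesssim \exp\bigl(-\gamma\,\dist(t(w_i),t(w_j))/h^{k-1}\bigr)$ for $j\in\I^{(k)}$, $i\succeq j$, from \cite[Example~2 and Theorem~3.8]{schafer2021compression}, and define $\dL^\star$ by retaining $\tilde\dL_{ij}$ on the basin pattern and zeroing the rest. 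Any discarded $(i,j)$ with $j\in c\cap\I^{(k)}$ has $\dist(t(w_i),t(w_j))\geq \rho h^{k-1}$ because otherwise $i$ would be strictly closer to $j$ than to every other element of color $c$. Combining the decay bound with a crude $\poly(N)$ count of entries gives $\|\dL^\star-\tilde\dL\|_F\lesssim \poly(N)\,e^{-\gamma\rho}$, hence $\|E\|:=\|\dL^\star(\dL^\star)^\top-\KM\|\leq \epsilon$ for $\rho\gtrapprox \log(N/\epsilon)/\gamma$.

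The core step is verifying by induction on $c\in\colors$ that \cref{alg:Cholesky} applied to $\KM+E=\dL^\star(\dL^\star)^\top$ returns precisely $\dL^\star$. Suppose the columns already computed coincide with the columns of $\dL^\star$ from earlier colors. Then $r=\dO_{:,c}-\dL\dL^\top\dm_c=\sum_{j}\dL^\star_{:,j}\bigl((\dL^\star_{:,j})^\top\dm_c\bigr)$ with $j$ ranging over colors at or after $c$. Two cancellations occur: columns from later colors vanish against $\dm_c$ because $\dL^\star$ is lower triangular and every $i\in c$ satisfies $i\prec j$; within $c$, the inner product $(\dL^\star_{:,j})^\top\dm_c=\sum_{i\in c}\dL^\star_{ij}$ collapses to $\tilde\dL_{jj}$ because any $i\in c$ with $i\neq j$ lies in its own basin, not in the basin of $j$, so $\dL^\star_{ij}=0$. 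Thus $r=\sum_{j\in c}\tilde\dL_{jj}\dL^\star_{:,j}$; the basins of distinct $j\in c$ partition $\I$, so each row $i$ of $r$ is supported on a unique $j\in c$ whose basin contains $i$, and the value is $\tilde\dL_{jj}\dL^\star_{ij}$. The $\texttt{scatter}$ routine sends $r_i$ to this $j$, yielding the column $\tilde\dL_{jj}\dL^\star_{:,j}$, and division by $\sqrt{\tilde\dL_{jj}^2}=\tilde\dL_{jj}$ recovers $\dL^\star_{:,j}$ exactly, closing the induction.

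Complexity follows quickly: each row $i$ appears in at most one column per color of level $\leq$ level$(i)$, giving $\mathrm{nnz}(\dL^\star)=\O(N\log N\log^d(N/\epsilon))$; each of the $|\colors|$ iterations performs one oracle call and one mat-vec with $\dL\dL^\top$ of cost $\O(\mathrm{nnz}(\dL))$, summing to the advertised $\O(N\log^2 N\log^{2d}(N/\epsilon))$. I expect the main obstacle to be the precise compatibility between (i) the ordering $\preceq$ used in $\texttt{scatter}$ and in the lower-triangularity of $\dL^\star$, (ii) the $\argmin$ tie-breaking rule, and (iii) the strict separation $2\rho h^{k-1}$ being larger than the basin radius $\rho h^{k-1}$; any mismatch in these choices breaks the exact cancellation in the induction and must be carried as an extra perturbation. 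A secondary subtlety is ensuring the $\poly(N)$ factor in the decay bound is sharp enough to keep $\rho=\O(\log(N/\epsilon))$ rather than demanding a larger logarithmic factor.
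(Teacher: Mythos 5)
Your proposal is correct and takes essentially the same route as the paper: it too truncates the exact Cholesky factor to the scatter-induced sparsity pattern, bounds the backward error $E$ by combining the exponential-decay bound of \cite{schafer2021compression} with the $2\rho h^{k-1}$ separation of same-colored basis functions, and proves exact recovery of the truncated factor by induction over colors using the disjointness of same-color basins (the paper phrases this step via Schur complements rather than your outer-product expansion of $\KM+E$, which is an equivalent presentation). The remaining differences are cosmetic, e.g.\ the paper keeps the $N^{\alpha}$ prefactor of the decay estimate explicit where you absorb it into a $\poly(N)$ factor.
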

\begin{proof}
  The proof can be found in \cref{sec:proof_simplicial}.
\end{proof}

According to \cref{thm:main_thm_simplicial} there exists an exponentially small perturbation of the input, such that applying \cref{alg:Cholesky} to the \emph{perturbed} input yields the \emph{exact} solution. 
If we could prove that small perturbations of the input result in small perturbations of the output, this result would imply an accuracy estimate for \cref{alg:Cholesky}. 
Despite encouraging numerical results, we are not able to rigorously establish this result, just as in the case of incomplete Cholesky factorization \cite{schafer2021compression}. 
However, just as in \cite{schafer2021compression}, we can prove an end-to-end guarantee for a small variation of \cref{alg:Cholesky} operating on \emph{supernodes} instead of individual rows and columns of $\dL$. 

\subsection{Supernodal Cholesky recovery} 
The difficulty in proving accuracy results for \cref{alg:Cholesky} arises from the possible amplification of the truncation errors.
There will be on the order of $\log(N) \rho^{d}$ colors, and the truncation error is of order $\exp\left(-\gamma \rho\right)$ for a constant $\gamma$.
In principle, the error could be amplified by a constant factor $C > 1$ after peeling off each color, leading to a total error bound of the order of $\exp\left(\log(C) \log(N) \rho^d - \gamma \rho \right)$.
Since this bound is increasing in $\rho$, it is of no use in proving the accuracy of \cref{alg:Cholesky}, even though we do not observe such catastrophic error amplification in practice. 
In \cite{schafer2021compression}, this problem is addressed by aggregating nearby basis functions with similar sparsity patterns into supernodes with identical sparsity patterns.
The Cholesky factorization, and thus \cref{alg:Cholesky}, can then be performed on a block matrix defined with respect to this partition into supernodes.
Without significantly changing the sparsity pattern, we can choose supernodes of size $\approx \rho^d$.
If we color the supernodes just as we did in the simplicial case, we find that we only need $\O(\log(N))$ colors, resulting in a total error bound of $\exp\left(\log(C) \log(N) - \gamma \rho \right)$ for some $C > 1$, allowing us to rigorously prove the accuracy of our supernodal algorithm.
Supernodes also allow us to reformulate our algorithms in terms of a smaller number of level three BLAS operations.
In the closely related work of \cite{chen2021multiscale}, as well as sparse linear algebra more broadly \cite[Section 9]{davis2016survey}, supernodes are used to improve performance. 

\subsubsection{Supernodal aggregation}
A supernode is a set of indices that represents a group of basis functions, and thus of rows and columns of the matrices $\KM$ and $\dL$.
For our purposes, supernodes indexed by sets $\{\tilde{\I}^{(k)}\}_{1 \leq k \leq q}$ can be constructed as follows. 
\begin{enumerate}
  \item For each $1 \leq k \leq q$ select locations\footnote{For instance, this can be done by successively selecting points $x$ that violate the third condition.} $\left\{y_{\tilde{i}}\right\}_{\tilde{i} \in \tilde{\I}^{(k)}} \subset \Omega$ such that 
  \begin{align*}
    \min_{\tilde{i}, \tilde{j} \in \tilde{\I}^{(k)}} \dist\left(y_{\tilde{i}}, y_{\tilde{j}}\right) &\geq \rho h^k\\ 
    \min_{\tilde{i} \in \tilde{\I}^{(k)}} \dist\left(y_{\tilde{i}}, \partial \Omega\right) &\geq \rho h^k\\ 
    \max_{x \in \Omega} \min\left(\dist\left(x, \partial \Omega\right), \min_{\tilde{i} \in \tilde{\I}^{(k)}} \dist\left(y_{\tilde{i}}, x\right)\right) &\leq \rho h^k.
  \end{align*}
  \item Assign index $i \in \I^{(k)}$ to the supernode $\tilde{i}$ (writing $i \in \tilde{i}$) if $\tilde{i}$ is the element of $\tilde{\I}^{(k)}$ for which $y_{\tilde{i}}$ is closest to the centroid of $t(\dw_i)$, breaking ties arbitrarily.
\end{enumerate}

\subsubsection{Supernodal graph coloring}
Instead of coloring individual nodes in \cref{sec:coloring}, we now color entire supernodes such that for any supernodes $\tilde{i}$ and $\tilde{j}$ on scale $k$ with the same color, we have
\begin{equation*}
  i \in \tilde{i}, j \in \tilde{j} \Rightarrow \dist\left(t\left(\dw_i\right), t\left(\dw_j\right)\right) \geq 2 \rho h^{k - 1}. 
\end{equation*}
In practice, as in \cref{sec:coloring}, we try to choose the coloring, such as to reduce the number of colors needed, by packing supernodes of the same color as tightly as we can.
Crucially, the number of supernodal colors needed can be upper bounded by $\O(\log(N))$, with \emph{no dependence on} $\rho$.
As before, we denote as $\supercolors^{(k)}$ the supernodal colors on level $k$ and write $\supercolors \coloneqq \cup_{1 \leq k \leq q} \supercolors^{(k)}$.
We use $\preceq$ to denote the ordering of the supernodes from coarse to fine, with each color appearing consecutively.

\subsubsection{Supernodal measurements}
In \cref{sec:coloring}, each color resulted in a single column of the measurement matrix $\dM$. 
Here, we associate to each supernodal color $\tilde{c} \in \supercolors$ on level $k$ a supernodal measurement consisting of $m_{\tilde{c}} \coloneqq \max_{\tilde{i} \in \tilde{c}} \# \tilde{i}$ columns.
To define this measurement, we introduce an arbitrary ordering of the elements of each supernode and let the function $\eta: \I \longrightarrow \Naturals$ return the position of a basis function within this supernode. 
So, if $i$ is the first element of supernode $\tilde{i}$, we have $\eta(i) = 1$, and if $j$ is the third element of supernode $\tilde{j}$, we have $\eta(j) = 3$.
Denoting the $l$-th unit vector as $\mathbf{e}_{l}$, the supernodal measurement of $\tilde{c} \in \supercolors$ is then given by
\begin{equation}
\label{eqn:block-measurement}
  \dM_{:, \tilde{c}} = \sum \limits_{\tilde{i} \in \tilde{c}} \sum \limits_{i \in \tilde{i}} \de_{i} \de_{\eta(i)}^{\top} \subset \Reals^{N \times m_{\tilde{c}}}.
\end{equation}
In other words, we obtain each column of $\dM_{:, \tilde{c}}$ as in \cref{sec:measurements} after picking one basis function from each supernode in $\tilde{c}$, picking each basis function only once. 

\subsubsection{Computing the supernodal Cholesky factorization}
The supernodal measurements $\dM_{:, \tilde{c}}$ for each color $\tilde{c} \in \supercolors$, allow us to use a blocked version of the algorithm that performs key operations using dense level-three BLAS. 
To this end, we consider the matrices $\dO$, and $\dL$ as $\tilde{\I} \times \tilde{\I}$ block-matrices, the $(\tilde{i}, \tilde{j})$-th entry of which is a matrix in $\Reals^{\tilde{i}, \tilde{j}}$.
The matrix $\dM$ is treated as a $\tilde{I} \times \supercolors$ matrix, with the $(\tilde{i}, \tilde{c})$ entry a matrix in $\Reals^{\tilde{i} \times m_{\tilde{c}}}$ defined by \cref{eqn:block-measurement}.
We define a block-version of scatter taking in a block-vector $\solM \in \Reals^{\tilde{I} \times m_{\tilde{c}}}$ and splits it into a sparse matrix in $\Reals^{\tilde{I} \times \tilde{c}}$ defined by:
\begin{equation*} 
  \left(\texttt{scatter}_{\tilde{c}} \left(\solM\right) \right)_{\tilde{i}\tilde{j}} \coloneqq 
  \begin{cases}
    &\solM_{\tilde{i}, 1 : \# \tilde{j} }, \ \mathrm{if} \ \tilde{j} \prec \tilde{i} \ \mathrm{and}  \ \tilde{j} = \argmin_{\tilde{\iota} \in \tilde{c}} \dist\left(y_{\tilde{i}}, y_{\tilde{\iota}})\right)\\
    &0, \ \mathrm{else}
  \end{cases},
\end{equation*} 
where we use an arbitrary method to break ties, ensuring that the $\argmin$ has a unique solution.
Letting $\diag\left(\dL\right)$ extract the block-diagonal part of $\dL$ and denoting as $\diag(\dL)^{-1/2}$ the inverse-Cholesky factorization of the block-diagonal part, the resulting \cref{alg:SuperCholesky} is almost identical to \cref{alg:Cholesky}, just acting on supernodal blocks instead of individual entries.

\begin{algorithm}[H]
    \begin{algorithmic}[1]
        \STATE $\dL \leftarrow 0 \times \tilde{\I}$ empty matrix 
        \FOR{$\tilde{c} \in \supercolors$}
        \STATE $\dL_{\mathrm{new}} \leftarrow \texttt{scatter}_{\tilde{c}}\left( \dO_{:, \tilde{c}} - \dL \dL^{\top} \dM_{:, \tilde{c}}\right)$
        \STATE $\diag(\dL_{\mathrm{new}}) \leftarrow \frac{\diag\left(\dL_{\mathrm{new}}\right) + \diag\left(\dL_{\mathrm{new}}\right)^{\top}}{2}$
        \STATE $\dL_{\mathrm{new}} \leftarrow \dL_{\mathrm{new}} \diag\left(\dL_{\mathrm{new}}\right)^{-1/2}$
 
        \STATE $\dL \leftarrow \texttt{hcat}\left(\dL, \dL_{\mathrm{new}}\right)$
        \ENDFOR
  			\RETURN $\dL \diag\left(\dL\right)^{-1/2}$
	\end{algorithmic}
	\caption{\label{alg:SuperCholesky} Supernodal Cholesky recovery}
\end{algorithm}

\subsubsection{Results for the supernodal case}
When using the supernodal multicolor ordering, the total number of colors is upper bounded by $\O\left(\log(N)\right)$, which allows us to rigorously control error propagation throughout the Cholesky recovery.

\begin{theorem}
  \label{thm:main_thm_supernodal}
  By choosing $\rho \gtrapprox \log(N / \epsilon)$, we obtain a measurement matrix $\dM$ with $\O\big(\log(N) \log^{d}(N/\epsilon)\big)$ columns such that the $\dL$ produced by \cref{alg:SuperCholesky} satisfies $\big\| \dL \dL^{\top} - \KM \big\| \leq \epsilon$.
  This requires $\O\big(\log(N) \log^{d}(N/\epsilon)\big)$ matrix-vector products with $\KM$ and additional computational cost $\O\big(N \log^2(N) \log^{2d}(N/\epsilon)\big)$. The resulting $\dL$ has $\O\big(N \log(N) \log^{d}(N/\epsilon)\big)$ nonzero entries.
  The hidden constants in $\gtrapprox, \O$ above depend on $d$, $\Omega$, $\|\IK\|$, $\|\IK^{-1}\|$, $\delta$, and $h$, but not on $q$, $N$, or $\epsilon$.
\end{theorem}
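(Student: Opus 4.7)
The plan is to reduce the analysis of \cref{alg:SuperCholesky} to a controlled perturbation of the exact sparse Cholesky compression of \cite{schafer2021compression}, exploiting the fact that the supernodal coloring uses only $\O(\log N)$ colors so that error amplification stays polynomial in $N$. First I would fix the exact Cholesky factor $\dL^{\star}$ of $\KM$ in the coarse-to-fine supernodal ordering and invoke the exponential-decay results of \cite{schafer2021compression} to produce a truncated factor $\dL^{\rho}$ supported on the sparsity pattern $S_\rho$ of supernodal pairs $(\tilde i,\tilde j)$ with $\tilde j\preceq\tilde i$ and $\dist(y_{\tilde i},y_{\tilde j})\le C\rho h^{k-1}$, where $k$ is the level of $\tilde j$. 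Standard ball-counting across the $q=\O(\log N)$ levels gives $|S_\rho|=\O(N\log(N)\rho^d)$ and $\|\dL^{\rho}(\dL^{\rho})^{\top}-\KM\|\lesssim\exp(-\gamma\rho)$ for some $\gamma>0$ depending only on the constants listed in the theorem.

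Next I would verify the measurement-inversion step. For each supernodal color $\tilde c\in\supercolors^{(k)}$, two supernodes $\tilde i,\tilde j\in\tilde c$ are at distance $\ge 2\rho h^{k-1}$, so on the pattern $S_\rho$ the row supports of the columns of $\dL^{\rho}$ indexed by different $\tilde j\in\tilde c$ are pairwise disjoint. Consequently, if the exact $\dL^{\rho}$ had been reproduced through all colors $\tilde c'\prec\tilde c$, the quantity $\dO_{:,\tilde c}-\dL^{\rho}(\dL^{\rho})^{\top}\dM_{:,\tilde c}$ would split into well-separated columns, and $\texttt{scatter}_{\tilde c}$ would extract precisely the new block of $\dL^{\rho}$, up only to the truncation error inherent in $\dL^{\rho}$ itself. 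The diagonal symmetrization and inverse-Cholesky on lines~4--5 then realize the block-Cholesky step on the corresponding Schur complement.

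The crux is the inductive error bound across colors. I would show that perturbing the current $\dL$ by $\eta$ in operator norm changes the computed new block by at most $C_{\rm amp}\,\eta+C\exp(-\gamma\rho)$, where $C_{\rm amp}$ depends only on $d$, $\Omega$, $\|\IK\|$, $\|\IK^{-1}\|$, $\delta$, $h$. This requires uniform bounds on the norms of $\dL$ and $\dM$, a uniform positive lower bound on the block pivots encountered so that the inverse square-root of $\diag(\dL_{\mathrm{new}})$ is stable, and Lipschitz continuity of that map on a neighborhood of those pivots. Iterating across $|\supercolors|=\O(\log N)$ colors then accumulates to a bound of the form $C_{\rm amp}^{\O(\log N)}\exp(-\gamma\rho)=N^{\O(1)}\exp(-\gamma\rho)$, which is at most $\epsilon$ once $\rho\gtrapprox\log(N/\epsilon)$.

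The complexity estimates follow by bookkeeping. The number of measurement columns is $\sum_{\tilde c}m_{\tilde c}=\O(|\supercolors|\cdot\rho^d)=\O(\log N\log^d(N/\epsilon))$, giving the stated matrix-vector product count. The dominant per-color work is forming $\dL\dL^{\top}\dM_{:,\tilde c}$; with $\dL$ sparse on $S_\rho$ and $\dM_{:,\tilde c}$ having $\O(\rho^d)$ columns, this costs $\O(N\log N\cdot\rho^{2d})$ per color and $\O(N\log^2 N\log^{2d}(N/\epsilon))$ in total, while $\dL$ inherits at most $|S_\rho|=\O(N\log N\log^d(N/\epsilon))$ nonzeros. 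The main obstacle I anticipate is step three: proving that the amplification constant $C_{\rm amp}$ is genuinely independent of both $N$ and $\rho$. This reduces to uniform positive lower bounds on the block pivots during supernodal elimination, in the same vein as the bounds used to analyze incomplete Cholesky in \cite{schafer2021compression}; the geometric spacing $\rho h^{k-1}$ imposed by the coloring must be shown not to degrade those bounds, which is where the assumption that $\rho$ is large enough (but independent of $h$ and $N$) enters crucially.
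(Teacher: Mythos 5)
Your overall architecture matches the paper's: compare the computed factorization against a reference with \emph{exactly} sparse supernodal Cholesky factors, propagate the error color by color, use the fact that $\#\supercolors = \O(\log N)$ to conclude that a constant amplification per color costs only a $\poly(N)$ factor, and beat that factor with the $\exp(-\gamma\rho)$ truncation error by taking $\rho \gtrapprox \log(N/\epsilon)$; the bookkeeping for columns of $\dM$, matvecs, flops, and nonzeros is also the same. The gap is in your key inductive step, which is exactly the point you flag as the "crux" but do not resolve, and as formulated it rests on two claims that are not available. First, you ask for "uniform bounds on the norms of $\dL$ and $\dM$": the operator norm of $\dM_{:,\tilde c}$ is \emph{not} $\O(1)$ --- each of its columns is a sum of one standard basis vector per supernode of the color, so $\|\dM_{:,\tilde c}\|$ grows like the square root of the number of supernodes in that color, which is polynomial in $N$ at fine scales. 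Hence "perturb $\dL$ by $\eta$ in operator norm $\Rightarrow$ the new block moves by $C_{\rm amp}\eta$ with $C_{\rm amp}$ independent of $N$" does not follow from a global operator-norm induction: the multiplication by $\dM_{:,\tilde c}$ sums many entries of the error and can amplify an unstructured operator-norm bound polynomially \emph{per color}, which over $\O(\log N)$ colors would give a quasi-polynomial, not polynomial, factor. Second, there is no uniform positive lower bound on the block pivots: by the spectral localization result (\cref{thm:spectral_loc}) the pivots on scale $k$ scale like $h^{2ks}$, i.e.\ they decay polynomially in $N$ at the finest scale; what is uniform is only the \emph{ratio} $\lambda_{\max}/\lambda_{\min}$ of the relevant Schur-complement blocks, and stability of the inverse square root must be formulated relative to that ratio together with a smallness condition of the form $\epsilon \le \lambda_{\min}/2$ (which is why the paper's intermediate statement requires $\epsilon < \bar C^{-1}N^{-\bar\alpha}$).

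The paper's proof closes exactly this hole by abandoning the single operator-norm invariant. It tracks \emph{blockwise Frobenius} errors of the additive updates $\addschur^{\tilde c} = \dL_{:,\tilde c}\dL_{:,\tilde c}^{\top}$ versus their reference counterparts, exploits that both the computed and the reference updates are supported on the supernodal sparsity pattern so that the sums $\sum_{\tilde\iota\in\tilde c^{+}}$ produced by multiplying with $\dM_{:,\tilde c^{+}}$ contain only a geometrically controlled number of nonzero terms (\cref{lem:ball_packing}), weights the contributions of coarser scales by $h^{d(p-k)}$ so that the growing interaction counts are balanced, and then runs a two-level recursion (\cref{lem:error_propagation,lem:single_scale_propagation}): constant amplification $\varphi = 1 + \tfrac32\kappa + \tfrac92\kappa^2$ within the bounded number of colors per scale, where $\kappa$ bounds $\lambda_{\max}/\lambda_{\min}$ via \cref{thm:spectral_loc}, and a constant factor per scale across the $q = \O(\log N)$ scales. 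To repair your argument you would need to replace "operator norm of the current factor" by a localized (blockwise) error measure and supply the ball-packing and conditioning-ratio inputs; without that, the central estimate $C_{\rm amp}^{\O(\log N)} = N^{\O(1)}$ is unsupported.
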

\begin{proof}
  The proof can be found in \cref{sec:proof_supernodal}.
\end{proof}
By adopting the supernodal version of the algorithm, we provide a rigorous end-to-end guarantee on the approximation accuracy of our method and its relationship to the computational cost and the number of matrix-vector products with $\KM$ required.

\begin{remark}
  Analog to \cite{schafer2021compression}, one can slightly adapt the (simplicial) sparsity pattern, which in our case is given by the simplicial $\texttt{scatter}$ operation, to obtain the same end-to-end guarantees for \cref{alg:Cholesky} applied in the supernodal ordering.
\end{remark}

\subsection{Low-rank approximation}
In the above, we have considered the improvement of the approximation accuracy as we increase the tuning parameter $\rho$ and thus the size of the sparsity pattern of $\dL$.
Instead, we could choose $\rho = \infty$ and limit the number of columns of $\dL$, obtaining a low-rank approximation of $\KM$.
Following \cite{schafer2021compression}, this approach results in a \emph{near-optimal} low-rank approximation, matching the $2$-norm approximation of the eigendecomposition up to a constant factor.

\begin{theorem}
  \label{thm:low_rank}
  In the setting of \cref{thm:main_thm_simplicial,thm:main_thm_supernodal}, let $\dL$ be the exact Cholesky factor of $\KM$ (which is obtained from setting $\rho = \infty$ and using \cref{alg:Cholesky,alg:SuperCholesky}).
  Denoting by $\left\|\cdot\right\|$ the matrix-$2$-norm, we have
  \begin{equation*}
    \left\| \KM - \dL_{:, 1:k} \left(\dL_{:, 1:k}\right)^{\top} \right\| \leq C \left\|\KM \right\| k ^{-2s / d}
  \end{equation*}
  with the constant $C$ depending only on $\|\IK\|$, $\|\IK^{-1}\|$, $d$, $h$, $\delta$, and $s$. 
  This is, up to a constant, the optimal rate decay of any low-rank approximation.
\end{theorem}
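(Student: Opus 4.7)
The plan is to identify the tail $\KM - \dL_{:,1:k}(\dL_{:,1:k})^\top$ with a Schur complement on the fine-scale block, bound it using the operator-adapted wavelet (gamblet) approximation theory of \cite{schafer2021compression}, and finally invoke Weyl-type eigenvalue asymptotics for the optimality statement. First I would restrict to values of $k$ that fall on a scale boundary, $k = |\I^{(1)}| + \cdots + |\I^{(K)}|$ for some $K \leq q$; since $|\I^{(j+1)}| / |\I^{(j)}|$ is bounded in terms of $h$ and $d$, intermediate values of $k$ change the rate only by a multiplicative constant.

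Partitioning indices into coarse ($c$: scales $\leq K$) and fine ($f$: scales $>K$) blocks, the lower-triangular Cholesky structure $\KM = \dL\dL^\top$ gives
\begin{equation*}
  \KM - \dL_{:,1:k}(\dL_{:,1:k})^\top = \begin{pmatrix} 0 & 0 \\ 0 & \schur \end{pmatrix}, \qquad \schur \coloneqq \KM_{ff} - \KM_{fc} \KM_{cc}^{-1} \KM_{cf},
\end{equation*}
so $\|\KM - \dL_{:,1:k}(\dL_{:,1:k})^\top\| = \|\schur\|$. The Schur complement admits the standard variational characterization: for any fine-block coefficient vector $\alpha$, $\alpha^\top \schur \alpha$ equals the minimum of the $\KM$-quadratic form over all extensions by a coarse coefficient vector $\beta$. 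Translating to the continuum via $u_f = \sum_{i \in f} \alpha_i w_i$ and $u_c = \sum_{i \in c} \beta_i w_i$, this reads
\begin{equation*}
  \alpha^\top \schur \alpha = \min_{u_c} \langle u_c + u_f, \IK^{-1}(u_c + u_f) \rangle_{L^2},
\end{equation*}
which is exactly the numerical homogenization error at scale $h^K$. The operator-adapted wavelet estimates developed in \cite{schafer2021compression} bound this by $C h^{2sK} \|u_f\|_{L^2}^2 = C h^{2sK} \|\alpha\|_2^2$, with $C$ depending only on $\|\IK\|$, $\|\IK^{-1}\|$, $d$, $h$, $\delta$, and $s$. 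Combined with the volumetric count $k \asymp h^{-Kd}$, this yields $\|\schur\| \leq C k^{-2s/d}$, and absorbing $\|\KM\|$ into the constant gives the claimed bound.

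For optimality I would invoke Weyl-type asymptotics for self-adjoint elliptic operators of order $2s$ on Lipschitz domains: the eigenvalues of $\IK^{-1}$ satisfy $\mu_j \asymp j^{-2s/d}$. Since $\KM$ represents $\IK^{-1}$ in an $L^2$-orthonormal basis of a finite-dimensional subspace, the eigenvalues of $\KM$ interlace with those of $\IK^{-1}$, so the Eckart--Young theorem implies that any rank-$k$ operator-norm approximation of $\KM$ incurs error at least $\mu_{k+1}$, which cannot decay faster than $k^{-2s/d}$.

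The main obstacle is making the Schur-complement bound rigorous: one must invoke the gamblet approximation estimates of \cite{schafer2021compression} while keeping all constants uniform in $K$, $q$, and $N$, and carefully track the equivalence between $L^2$-orthonormality of the multiresolution basis and the Euclidean inner product on coefficient vectors. Everything else (the block Cholesky algebra, the scale-boundary reduction, and Weyl's law) is routine once this is in place.
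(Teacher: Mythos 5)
Your upper bound is correct and follows essentially the paper's route: the residual $\KM - \dL_{:,1:k}\left(\dL_{:,1:k}\right)^{\top}$ is the Schur complement of the fine-scale block, its quadratic form is controlled by $C h^{2sK}\|\alpha\|_2^2$ because the fine wavelets are $L^2$-orthogonal to $V^{(K)}$ (this is exactly the spectral localization estimate of \cref{thm:spectral_loc}, i.e.\ \cite[Theorem 5.16]{schafer2021compression}, which the paper simply cites), and the volumetric count $k \asymp h^{-Kd}$ converts $h^{2sK}$ into $k^{-2s/d}$; the reduction to scale boundaries is harmless since the Schur complement norm is nonincreasing in $k$ and the level sizes grow geometrically.

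The optimality part, however, has a genuine gap. By Eckart--Young you need a \emph{lower} bound $\lambda_{k+1}(\KM) \gtrsim k^{-2s/d}$, but the interlacing you invoke goes the wrong way: $\KM = W^{\top}\IK^{-1}W$ is the compression of a positive compact self-adjoint operator to a finite-dimensional subspace, and for such compressions the min--max principle only gives $\lambda_{j}(\KM) \leq \mu_{j}(\IK^{-1})$; a compression can shrink eigenvalues arbitrarily (e.g.\ compress onto a span of eigenfunctions with small eigenvalues), so ``error at least $\mu_{k+1}$'' does not follow, and Weyl asymptotics for $\IK^{-1}$ alone cannot close this. What is needed is a proof that this particular multiresolution subspace captures the top of the spectrum, and that is precisely how the paper argues: by \cref{thm:spectral_loc}, the principal submatrix $\KM_{\bar{\I}^{(p)},\bar{\I}^{(p)}}$ of size $\asymp h^{-pd}$ has smallest eigenvalue $\gtrsim h^{2ps}$, so finite-dimensional Cauchy interlacing yields $\lambda_{m}(\KM) \gtrsim m^{-2s/d}$ for $m = \#\bar{\I}^{(p)}$, and optimality follows. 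If you prefer your continuum framing, you would have to additionally prove $\lambda_{k+1}(\KM) \gtrsim \mu_{k+1}(\IK^{-1})$ for $k$ up to a constant fraction of $N$, which in substance reduces to the same discrete submatrix eigenvalue bound.
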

\begin{proof}
  The proof can be found in \cref{sec:proof_low_rank}.
\end{proof}

Asymptotically speaking, the accuracy-vs-cost (both in terms of computation and matrix-vector products with $\KM$) of the sparse approximation obtained by setting $\rho < \infty$ clearly outperforms that of the low-rank approximation obtained by setting $k < N$.  
In practice, it can be beneficial to utilize \emph{both} approximations by restricting \cref{alg:Cholesky,alg:SuperCholesky} to only some of the colors. 
This can improve the accuracy for a given computational budget and provides an approximation of the leading eigenspace.

\subsection{Approximation of the continuous Green's function}
Our theoretical results on the approximation of the discretized Green's function can be extended to approximations of the continuous Green's function $\K \in L^{2}\left(\Omega \times \Omega\right)$, by piecewise constant or piecewise affine interpolation. 

\subsubsection{Approximation in operator norm}
From the approximate factorizaton $\KM \approx \dL \dL^{\top}$ computed using \cref{alg:SuperCholesky} with a given $\rho$, we define the associated approximate continuous Green's function as 
\begin{equation*}
  \bar{\K}_{\rho}\left(x, y\right) = \sum_{i \in \cup_{k \leq q} \I^{(k)}} \sum_{j \in \cup_{k \leq q} \I^{(k)}} \left(\dL \dL^{\top}\right)_{ij} w_{i}(x) w_{j}(y).
\end{equation*}
We denote $\bar{\K}_{\infty}$ simply as $\bar{\K}$ and observe that it is equal to 
\begin{equation*} 
  \bar{G}(x, y) = \sum_{i \in \cup_{k \leq q} \I^{(k)}} \sum_{j \in \cup_{k \leq q} \I^{(k)}} \left\langle w_{i}, \K w_{j} \right\rangle_{L^2} w_{i}(x) w_{j}(y).
\end{equation*} 
Like the Green's function $\K$, the projected Green's function $\bar{\K}$ defines a map from $L^2\left(\Omega\right)$ to itself by $\rhs \mapsto \int_{\Omega} \bar{\K}(\cdot, y) \rhs(y) \mathrm{d}y$.
We denote the $L^2$ operator norm as $\|\cdot\|_{L^2 \rightarrow L^2}$.
\begin{theorem}
  \label{thm:continuous_operator}
  In the setting of \cref{thm:main_thm_supernodal}, for $\rho \gtrapprox \log(N)$ and $q$ chosen such that $h^q \approx N^{-1/d}$, we have for a constant $C$ depending only on $d, \Omega, \IK, h, \delta, p$,
  \begin{equation*}
   \|\bar{\K}_{\rho} - \K \|_{L^2 \rightarrow L^2} \leq CN^{-1 / d}.
  \end{equation*}
  Thus, $N \approx \epsilon^{-d}$ yields an $\epsilon$ approximation of $\KM$ in $\|\cdot\|_{L^2 \rightarrow L^2}$ norm from $C\left(\log^{d + 1}(\epsilon)\right)$ matrix-vector products with $\KM$, and thus applications of the solution operator $\K$.
\end{theorem}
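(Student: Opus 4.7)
\bigskip

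\textbf{Proof plan.} The natural strategy is a triangle inequality split
\begin{equation*}
  \| \bar{\K}_{\rho} - \K \|_{L^2 \to L^2} \leq \| \bar{\K}_{\rho} - \bar{\K} \|_{L^2 \to L^2} + \| \bar{\K} - \K \|_{L^2 \to L^2},
\end{equation*}
handling the first term via the already-proved \cref{thm:main_thm_supernodal} and the second term via a standard projection estimate on the Galerkin truncation of $\K$ to the multiresolution basis at depth $q$.

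For the first term, I would exploit the fact that the multiresolution basis $\{w_i\}_{i\in\I}$ is $L^2$-orthonormal. The map $\boldsymbol{u} \mapsto \sum_i u_i w_i$ is therefore an isometry $\ell^2(\I) \to L^2(\Omega)$, and consequently the operator defined by the kernel $\sum_{ij} A_{ij} w_i(x) w_j(y)$ has $L^2\to L^2$ operator norm equal to the spectral norm of the matrix $A$. Applied to $A=\dL\dL^\top-\KM$, this gives $\|\bar{\K}_\rho-\bar{\K}\|_{L^2\to L^2}=\|\dL\dL^\top-\KM\|$. Invoking \cref{thm:main_thm_supernodal} with target accuracy $\epsilon := N^{-1/d}$ requires $\rho\gtrapprox \log(N/\epsilon)=(1+1/d)\log N$, i.e.\ $\rho\gtrapprox\log N$, and yields $\|\bar{\K}_\rho-\bar{\K}\|_{L^2\to L^2}\lesssim N^{-1/d}$.

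For the second term, let $P_q$ denote the $L^2$-orthogonal projection onto $\cV^{(q)}=\mathrm{span}\{w_i\}_{i\in\I}$. By construction, $\bar{\K}$ is the kernel of the operator $P_q\,\K\,P_q$, so
\begin{equation*}
  \K - P_q \K P_q = (I-P_q)\K + P_q\K(I-P_q),
\end{equation*}
and since $\K$ is self-adjoint and $P_q$ is an orthogonal projection,
\begin{equation*}
  \|\K-\bar{\K}\|_{L^2\to L^2} \leq 2\,\|(I-P_q)\K\|_{L^2\to L^2}.
\end{equation*}
The operator $\K=\IK^{-1}$ maps $L^2(\Omega)\hookrightarrow H^{-s}(\Omega)$ continuously into $H^s_0(\Omega)$; combining this with the standard Bramble--Hilbert-type approximation estimate on the piecewise-constant/multiresolution space $\cV^{(q)}$ (whose pieces are convex sets of diameter $\le h^q$), I get $\|(I-P_q)u\|_{L^2}\lesssim h^q\|u\|_{H^{\min(s,1)}}$. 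Composing with $\K$ and using $h^q\approx N^{-1/d}$ gives $\|\K-\bar{\K}\|_{L^2\to L^2}\lesssim N^{-1/d}$, and summing the two contributions yields the claimed bound. Finally, the sample complexity statement follows by plugging $\epsilon=N^{-1/d}$ into the $\O(\log N\log^d(N/\epsilon))$ bound of \cref{thm:main_thm_supernodal} and substituting $N\approx\epsilon^{-d}$ to obtain $\O(\log^{d+1}(\epsilon^{-1}))$ calls to the solution operator.

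The main obstacle, and the only step that is not essentially bookkeeping, is the projection-error estimate on the multiresolution space: one needs the composition $(I-P_q)\K$ to actually converge in operator norm at rate $h^q$, which requires matching the smoothing order of $\IK^{-1}$ to the approximation order of the piecewise-constant Haar-type basis at scale $h^q$. This is immediate for $s\geq 1$ (first-order approximation of $H^1$ by piecewise constants), and for $0<s<1$ one argues by interpolation between $L^2$ and $H^1$, picking up the rate $h^{qs}$ instead; the theorem statement is then understood with an implicit exponent that is absorbed into the constant and hidden by taking $h^q\approx N^{-1/d}$ small enough.
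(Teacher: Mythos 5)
Your proposal is correct and takes essentially the same route as the paper: the same triangle-inequality split, the same identification of $\|\bar{\K}_{\rho}-\bar{\K}\|_{L^2\rightarrow L^2}$ with the spectral norm $\|\KM-\dL\dL^{\top}\|$ via $L^2$-orthonormality of the $w_i$, and the same invocation of \cref{thm:main_thm_supernodal} at accuracy $\epsilon=N^{-1/d}$, giving $\rho\gtrapprox\log N$. The only cosmetic difference is the Galerkin-truncation term: you bound $\|(I-P_q)\K\|_{L^2\rightarrow L^2}\lessapprox h^q$ directly from the smoothing property $\K:L^2\rightarrow H^s_0$ plus a Poincar\'e/Bramble--Hilbert estimate on the convex cells, whereas the paper (\cref{lem:approx} and \cref{cor:opnorm}) proves the dual statement $\|\K(I-P_q)\|\lessapprox h^q$ by duality against $H^{r-2s}$ test functions; by self-adjointness of $\K$ these are equivalent.
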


\subsection{Approximation in Hilbert-Schmidt norm}
We can also obtain estimates on the Green's function in $L^2\left(\Omega \times \Omega\right)$ or, equivalently, estimates of the associated operator in the Hilbert-Schmidt norm.
This is, in general, not possible with a piecewise constant Green's function, but requires projecting the approximate Green's function on a space of piecewise polynomials. 
For a nested partition $\tau$ with $\hat{q}$ levels and a positive integer $p$, we define the doubly projected Green's function $\hat{\bar{\K}}$ as 
\begin{equation*}
  \hat{\bar{G}}_{\rho, \hat{q}} (x, y) = \sum_{\hat{i} \in \cup_{k \leq \hat{q}} \hat{\I}^{(k)}} \sum_{\hat{j} \in \cup_{k \leq \hat{q}} \hat{\I}^{(k)}} \left\langle \hat{w}_{\hat{i}}, \bar{\K}_{\rho} \hat{w}_{\hat{j}} \right\rangle_{L^2} \hat{w}_{\hat{i}}(x) \hat{w}_{\hat{j}}(y),
\end{equation*}
where the $\left\{\hat{w}_{i}\right\}_{i \in \cup_{k \leq \hat{q}} \hat{\I}^{(k)}}$ are a local orthogonal multiresolution basis of functions that are polynomials of order $p - 1$  on elements of $\tau^{(\hat{q})}$ in the same way in which the $\left\{w_{i}\right\}_{i \in \cup_{k \leq q} \I^{(k)}}$ form a local orthogonal basis of functions that are constant on each element of $\tau^{(q)}$. 
We again write $\hat{\bar{\K}}_{\infty, \hat{q}} = \hat{\bar{K}}_{\hat{q}}$ and will furthermore drop the explicit dependence on $\hat{q}$, where it is unambiguous.
For the important case of $s = 1, d \in \{2,3\}$, we also need higher-order regularity. 
For instance, in the case of second-order divergence form elliptic PDE operators $\IK$ with $C^1$-coefficients and $C^2$ boundary, it can be shown that they form bounded invertible maps not just from $H^{-1}$ to $H^1_{0}$ but also from $L^2$ to $H^{2} \cap H_{0}^1$.

\begin{theorem}
  \label{thm:continuous_l2}
  In the setting of \cref{thm:main_thm_supernodal}, assume that the nested partition can be convexly refined infinitely to $\left\{\tau^{(k)}\right\}_{1 \leq k < \infty}$ for a given regularity parameter $\delta$. 
  Assume also that for $s \leq r \leq 2s$, $\IK$ is a bounded invertible map from $H^{r - 2s}\left(\Omega\right)$ to $\left(H_{0}^s \cap H^{r}\right)\left(\Omega\right)$.
  Choose $q$ such that $h^q \approx N^{-1/d}$.\\
  Then, for $C$ depending only on $d, \Omega, \IK, h, \delta, p$ and $\rho > C\log(N)$, $\min(p, r) > d/2$, and $\hat{q}$ such that $\lceil q / \min(p, r) \rceil \geq \hat{q} \geq \lfloor q / \min(p, r) \rfloor$, we have
  \begin{equation*}
   \|\hat{\bar{\K}}_{\rho, \hat{q}} - \K \|_{L^2 \otimes L^2} \leq CN^{-\left(1 - \frac{d}{2\min(p, r)}\right)}.
  \end{equation*}
  By choosing $N$ according to $\epsilon \approx N^{-\left(1 - \frac{d}{2\min(p, r)}\right)}$, we thus obtain an $\epsilon$ approximation of $\K$ in $\|\cdot\|_{L^2 \otimes L^2}$ norm from $C\left(\log^{d + 1}(\epsilon)\right)$ applications of the solution operator $\K$.
\end{theorem}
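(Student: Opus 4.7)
The plan is to split the error by the triangle inequality into a Cholesky-recovery contribution and a discretization contribution,
\begin{equation*}
  \|\hat{\bar{\K}}_{\rho, \hat{q}} - \K \|_{L^2 \otimes L^2}
  \leq \|\hat{\bar{\K}}_{\rho, \hat{q}} - \hat{\bar{\K}}_{\infty, \hat{q}}\|_{L^2 \otimes L^2}
  + \|\hat{\bar{\K}}_{\infty, \hat{q}} - \K\|_{L^2 \otimes L^2},
\end{equation*}
and to handle each term separately.

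For the Cholesky term, both objects have their Hilbert--Schmidt kernels supported in the tensor product of the level-$\hat{q}$ polynomial multiresolution space, whose dimension $\hat{N}$ satisfies $\hat{N} \lesssim h^{-d \hat{q}} \approx N^{1/\min(p, r)}$. Because the $\hat{w}_{\hat{i}}$ are $L^2$-orthonormal, the $L^2 \otimes L^2$ norm of the kernel difference equals the Hilbert--Schmidt norm of the operator difference, which is bounded by $\sqrt{\hat{N}}$ times the corresponding $L^2 \to L^2$ operator norm. The latter is controlled by $\|\bar{\K}_{\rho} - \bar{\K}\|_{L^2 \to L^2} = \|\dL \dL^{\top} - \KM\|$, which \cref{thm:main_thm_supernodal} bounds by any prescribed accuracy $\varepsilon$ once $\rho \gtrapprox \log(N/\varepsilon)$. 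Choosing $\varepsilon$ polynomially small in $N$ absorbs the $\sqrt{\hat{N}}$ factor and renders this contribution of strictly smaller order than the target rate.

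For the discretization term, I would write $\hat{\bar{\K}}_{\infty, \hat{q}} = \hat{P}_{\hat{q}}\, P_q\, \K\, P_q\, \hat{P}_{\hat{q}}$, where $P_q$ and $\hat{P}_{\hat{q}}$ denote the $L^2$-orthogonal projections onto the piecewise constants on $\tau^{(q)}$ and the piecewise polynomials of order $p - 1$ on $\tau^{(\hat{q})}$, respectively, and decompose
\begin{equation*}
  \hat{\bar{\K}}_{\infty, \hat{q}} - \K = \bigl(\hat{P}_{\hat{q}} \K \hat{P}_{\hat{q}} - \K\bigr) + \hat{P}_{\hat{q}} \bigl(P_q \K P_q - \K\bigr) \hat{P}_{\hat{q}}.
\end{equation*}
The second summand is bounded using \cref{thm:continuous_operator} together with the same rank argument as above and is of smaller order. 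The first summand is the standard $L^2 \otimes L^2$ projection error of the Green's function onto the tensor-product polynomial multiresolution space. Using the symmetry of $\K$ and its eigendecomposition $\K = \sum_j \lambda_j\, e_j \otimes e_j$, it is at most $2 \sum_j \lambda_j^2 \|(I - \hat{P}_{\hat{q}}) e_j\|_{L^2}^2$. Each term I would bound by the minimum of the trivial estimate $\|(I - \hat{P}_{\hat{q}}) e_j\|_{L^2} \leq 1$ and the classical piecewise-polynomial approximation bound $\lesssim (h^{\hat{q}})^{\min(p, r)} \|e_j\|_{H^{\min(p, r)}}$; the regularity hypothesis $\IK : H^{r - 2s} \to H^r$ combined with elliptic regularity gives $\|e_j\|_{H^{\min(p, r)}} \lesssim j^{\min(p, r)/d}$, while Weyl's law yields $\lambda_j \sim j^{-2s/d}$. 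Splitting the $j$-sum at the threshold $j \approx (h^{\hat{q}})^{-d}$ where the two bounds coincide, and using the mesh balance $h^{\hat{q} \min(p, r)} \approx h^q \approx N^{-1/d}$, produces the advertised rate $N^{-(1 - d/(2 \min(p, r)))}$; the condition $\min(p, r) > d/2$ is exactly what ensures summability of the resulting tail series.

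The main obstacle will be making this eigenvalue-split argument fully rigorous while accommodating two incompatibilities. First, because the Green's function has only limited Sobolev regularity across the diagonal, the elliptic bound on $\|e_j\|_{H^{\min(p, r)}}$ has to be exploited in both regimes of the split so that the two contributions combine to give the sharp rate rather than a suboptimal one. Second, since the piecewise constant space on $\tau^{(q)}$ and the polynomial space on $\tau^{(\hat{q})}$ are incomparable when $p > 1$, the mixed term $\hat{P}_{\hat{q}}(P_q \K P_q - \K) \hat{P}_{\hat{q}}$ cannot be dispatched by a commutation argument and must instead be estimated through the operator-norm bound of \cref{thm:continuous_operator}, compensated by the $\sqrt{\hat{N}}$ rank factor.
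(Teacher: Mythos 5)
Your proposal follows the same outer skeleton as the paper's proof: the triangle-inequality split into a recovery term and a discretization term, the bound $\|\hat{\bar{\K}}_{\rho,\hat{q}}-\hat{\bar{\K}}_{\infty,\hat{q}}\|_{L^2\otimes L^2}\leq\sqrt{\#\hat{\I}}\,\|\KM-\dL\dL^{\top}\|$ absorbed by taking $\rho\gtrapprox\log N$ (this is literally the paper's final step), and the decomposition of the discretization term into $\hat{P}_{\hat{q}}\K\hat{P}_{\hat{q}}-\K$ plus the mixed term $\hat{P}_{\hat{q}}(P_q\K P_q-\K)\hat{P}_{\hat{q}}$, which is exactly the splitting $(\hat{\K}-\K)+(\hat{\bar{\K}}-\hat{\K})$ of \cref{lem:interpol}; your rank-times-operator-norm estimate of the mixed term is equivalent to the paper's summation of the bound from \cref{cor:opnorm} over the $\lesssim h^{-\hat{q}d}$ coarse basis functions. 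Where you genuinely diverge is the pure projection term: the paper controls $\|\hat{P}_{\hat{q}}\K\hat{P}_{\hat{q}}-\K\|_{L^2\otimes L^2}$ by a local duality/Bramble--Hilbert argument (\cref{lem:approx}) summed scale by scale over the multiresolution basis (\cref{cor:hsnorm}), using only the assumed mapping property $\IK^{-1}:H^{r-2s}\to H^r\cap H^s_0$, whereas you use the eigendecomposition of $\K$, eigenvalue decay, eigenfunction Sobolev bounds, and a dyadic split of the spectral sum. The spectral route is shorter and, executed correctly, gives a rate at least as good, but it requires spectral inputs that the paper's hypotheses do not hand you directly.

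Two of those inputs are not justified as stated, and this is the gap. First, a two-sided Weyl law $\lambda_j\sim j^{-2s/d}$ is not available from the assumptions; only the upper bound $\lambda_j\lesssim j^{-2s/d}$ follows (via \cref{thm:spectral_loc}/\cref{thm:low_rank}), and fortunately it suffices if you organize the split in terms of $\lambda_j$, since the per-eigenpair contribution $\lambda^2\min\bigl(1,h^{2\hat{q}\min(p,r)}\lambda^{-2\min(p,r)/r}\bigr)$ is nondecreasing in $\lambda$. Second, the eigenfunction bound $\|e_j\|_{H^{\min(p,r)}}\lesssim j^{\min(p,r)/d}$ presumes full regularity $r=2s$; what the hypothesis actually gives is $\|e_j\|_{H^{r}}\leq C\lambda_j^{-1}\|e_j\|_{H^{r-2s}}\leq C\lambda_j^{-1}$ and hence, by interpolation, $\|e_j\|_{H^{\min(p,r)}}\lesssim\lambda_j^{-\min(p,r)/r}$. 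Redoing your split with these corrected inputs still yields a bound no worse than $Ch^{\hat{q}(\min(p,r)-d/2)}\approx Ch^{q(1-d/(2\min(p,r)))}$, i.e.\ the same rate the paper's own chain \cref{lem:approx}--\cref{cor:hsnorm}--\cref{lem:interpol} produces, so your argument survives the repair, but as written these two claims are unsupported. Two smaller points: the mixed term is not of smaller order---with $\hat{q}\approx q/\min(p,r)$ it is of exactly the same order $h^{q-\hat{q}d/2}$ as the main term, the choice of $\hat{q}$ being precisely what balances the two---and your final conversion via $h^q\approx N^{-1/d}$ yields $N^{-(1-d/(2\min(p,r)))/d}$ rather than the stated $N^{-(1-d/(2\min(p,r)))}$; this last bookkeeping step is identical to the one in the paper's \cref{lem:interpol}, so your accounting matches the paper's there.
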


As a byproduct of our proof, we also obtain the following corollary.

\begin{corollary}[\cref{cor:hsnorm} of the appendix]
  \label{cor:hsnorm_main}
  In the setting of \cref{thm:continuous_l2}, we can reconstruct the Green's function $\K$ to accuracy $\epsilon$ from solutions of the PDE for $C \epsilon^{\frac{2}{2\min(p, r) / d - 1}}$ that are piecewise polynomials (of order $p$) right-hand sides.
  This is possible without using sparsity, using only a global $L^2$-projection. 
\end{corollary}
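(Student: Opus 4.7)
The plan is to bypass the sparsity structure entirely and reconstruct $\K$ directly as the $L^2 \otimes L^2$-projection onto the tensor product of the local polynomial basis space $\hat V := \lspan\{\hat w_{\hat i}\}_{\hat i \in \cup_{k \leq \hat q} \hat \I^{(k)}}$ at a single scale $\hat q$. For each basis function $\hat w_{\hat j}$, one call to the solver produces $u_{\hat j} := \K \hat w_{\hat j}$, and the Galerkin entries $M_{\hat i, \hat j} := \langle \hat w_{\hat i}, u_{\hat j}\rangle_{L^2}$ are then inner products in the basis. Assembling
\[
\hat{\bar\K}_{\infty, \hat q}(x,y) = \sum_{\hat i, \hat j \in \cup_{k\leq \hat q} \hat \I^{(k)}} M_{\hat i,\hat j}\, \hat w_{\hat i}(x) \hat w_{\hat j}(y)
\]
recovers the kernel of $P_{\hat V} \K P_{\hat V}$, where $P_{\hat V}$ is the $L^2$-orthogonal projection onto $\hat V$. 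The total number of piecewise polynomial PDE solves is exactly $|\cup_{k\leq \hat q} \hat \I^{(k)}| = \O(h^{-\hat q d})$.

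To bound the $L^2 \otimes L^2$ projection error, use self-adjointness of $\K$ to obtain $\|\K - P_{\hat V}\K P_{\hat V}\|_{L^2 \otimes L^2} \leq 2\|(I-P_{\hat V})\K\|_{L^2 \otimes L^2}$ and expand in the eigenbasis $\K = \sum_k \sigma_k\, u_k \otimes u_k$ so that $\|(I-P_{\hat V}) \K\|_{L^2 \otimes L^2}^2 = \sum_k \sigma_k^2 \|(I-P_{\hat V}) u_k\|_{L^2}^2$. The assumption $\IK: H_0^s \cap H^r \to H^{r-2s}$ (bounded and invertible, $r \leq 2s$) combined with $L^2 \hookrightarrow H^{r-2s}$ yields $\|u_k\|_{H^r} \lesssim \sigma_k^{-1}$; Sobolev interpolation then gives $\|u_k\|_{H^{\min(p,r)}} \lesssim \sigma_k^{-\min(p,r)/r}$, and the local Bramble--Hilbert polynomial approximation (already used in the proof of \cref{thm:continuous_l2}) provides $\|(I-P_{\hat V}) u\|_{L^2} \leq C h^{\hat q \min(p,r)} \|u\|_{H^{\min(p,r)}}$. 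Combining these with the Weyl decay $\sigma_k \sim k^{-r/d}$ and splitting the sum at the threshold $k^\ast \sim h^{-\hat q d}$ (for which $\sigma_{k^\ast} \sim h^{\hat q r}$) balances the polynomial-limited contribution from low $k$ and the spectrum-limited tail from high $k$, yielding
\[
\|(I - P_{\hat V}) \K\|_{L^2 \otimes L^2} \leq C\, h^{\hat q(\min(p,r) - d/2)}.
\]

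Requiring this bound to be at most $\epsilon$ pins down $\hat q \gtrsim \log(\epsilon^{-1})/((\min(p,r) - d/2)\log(h^{-1}))$, so that the number of PDE solves is
\[
|\cup_{k\leq \hat q} \hat \I^{(k)}| \sim h^{-\hat q d} \sim \epsilon^{-\frac{d}{\min(p,r) - d/2}} = \epsilon^{-\frac{2}{2\min(p,r)/d - 1}},
\]
as claimed. The main obstacle is the Hilbert--Schmidt projection bound: the ``$-d/2$'' penalty cannot be extracted from any operator-norm estimate by itself but requires the low/high spectral splitting and cancellation described above. This is precisely the balance that produces the $N^{-(1-d/(2\min(p,r)))}$ rate in \cref{thm:continuous_l2}, so the corollary's proof amounts to isolating the single-scale polynomial-projection component of that argument, with the multiresolution/sparsity truncation on $\rho$ and $q$ switched off.
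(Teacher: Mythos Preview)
Your overall plan—reconstruct $P_{\hat V}\K P_{\hat V}$ with one solver call per basis function and bound the Hilbert--Schmidt error of the $L^2$-projection—is exactly the paper's strategy. The gap is in how you bound $\|(I-P_{\hat V})\K\|_{L^2\otimes L^2}$.

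Your eigenfunction route contains a concrete error: the Weyl asymptotics for $\K=\IK^{-1}$ with $\IK$ elliptic of order $2s$ are $\sigma_k \sim k^{-2s/d}$, not $k^{-r/d}$. The parameter $r\in[s,2s]$ is an additional regularity index for solutions and does not govern the spectral decay. With the correct exponent your threshold shifts to $k^\ast\sim h^{-\hat q rd/(2s)}$ rather than $h^{-\hat q d}$, the low/high balance changes, and the case analysis (including the regime where the Bramble--Hilbert bound exceeds the trivial bound $\|(I-P_{\hat V})u_k\|_{L^2}\le 1$, and the sub-case $p<r$) becomes delicate. It may be salvageable, but as written it does not establish the claimed rate $h^{\hat q(\min(p,r)-d/2)}$.

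The paper sidesteps all of this by computing the Hilbert--Schmidt norm in the multiresolution basis rather than the eigenbasis. Extending $\{\hat w_{\hat i}\}$ to an orthonormal basis of $L^2(\Omega)$ by continuing the hierarchy to all levels $k>\hat q$, one has directly
\[
\|\K(I-P_{\hat V})\|_{L^2\otimes L^2}^2 \;=\; \sum_{k>\hat q}\ \sum_{\hat i\in\hat\I^{(k)}} \|\K\hat w_{\hat i}\|_{L^2}^2,
\]
and the same Bramble--Hilbert plus $H^{r-2s}\!\to\!H^r$ mapping argument you invoke gives $\|\K\hat w_{\hat i}\|_{L^2}\le Ch^{\min(p,r)k}$ for $\hat i$ on level $k$, without passing through eigenfunctions. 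With $\#\hat\I^{(k)}\lesssim h^{-kd}$ and $2\min(p,r)>d$ the sum is a convergent geometric series bounded by $Ch^{(2\min(p,r)-d)\hat q}$. No Weyl law, no Sobolev interpolation, no spectral splitting: the multiresolution basis is already adapted to the operator in precisely the way you were trying to extract from the eigendecomposition.
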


\begin{remark}
  By \cref{cor:hsnorm_main}, the approximation rates of \cite{boulle2021learning} can be improved by identifying $\left(\K w_{i}\right)_{1 \leq i \leq m}$ one by one, for $\left(w_i\right)_{1 \leq i \leq m}$ a basis of local polynomials.  
\end{remark}

\section{Numerical Experiments}
\label{sec:numerics}

\subsection{Setup}
We now present numerical evidence for the practical utility of our method, focusing on the tradeoff between the number of matrix-vector products and the accuracy of the resulting approximation. 
We construct the multiresolution basis as in \cref{sec:multires}, using the collocation points of the finite difference and finite element discretizations to compute distances between basis functions. 
We construct the graph-coloring by successively selecting the basis function that is furthest from the basis functions that were already colored in a given color, using a fast nearest-neighbor lookup and a binary heap to achieve near-linear complexity.
We follow the implementation of \cref{alg:Cholesky} with the only difference that we use the original basis $\left\{e_{i}\right\}_{i \in \hat{\I}}$ given by the finite difference points to represent the column space of $\dL$, instead of the multiresolution basis given by the $\left\{\dw_{i}\right\}_{i \in \I}$. 
The code for reproducing the experiments can be found under \url{https://github.com/f-t-s/sparse_recovery_of_elliptic_solution_operators_from_matrix-vector_products}.

\begin{figure}
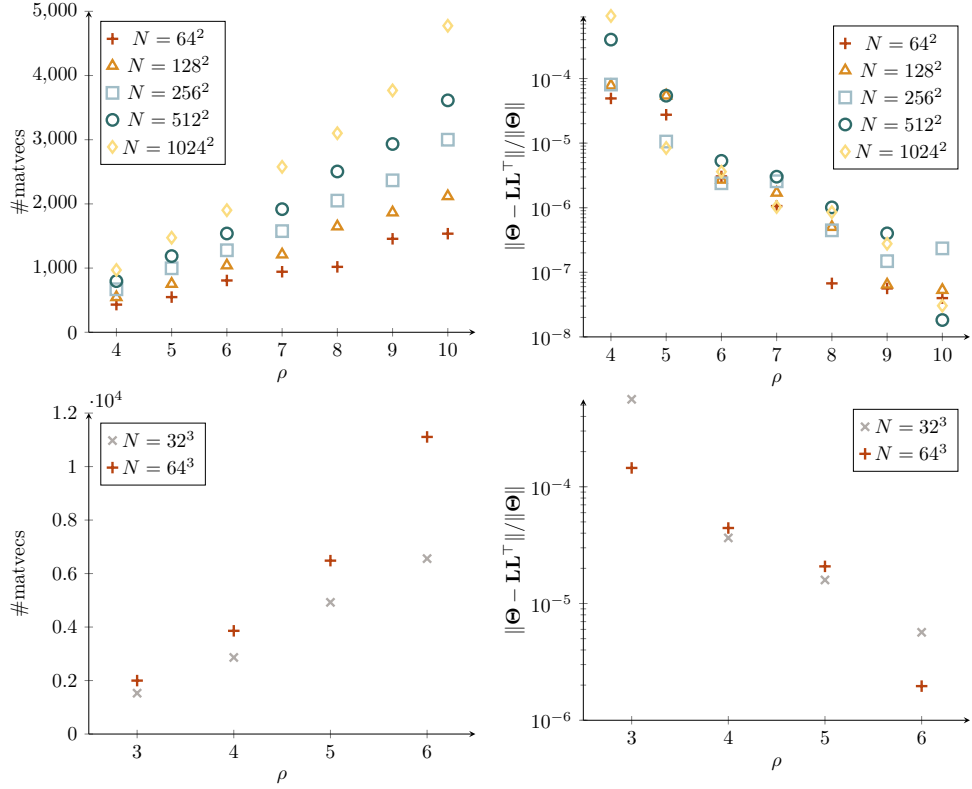

  \centering
  \begin{minipage}{0.495 \textwidth}
    \centering
    \begin{tikzpicture}[scale=0.75]
		  	\begin{axis}[
    axis lines = left,
    xlabel = \(\rho\),
    ylabel = \(\# \mathrm{matvecs}\),
    legend pos = north west,
    xmin = 3.5, xmax = 10.5,
    ymin = 0, ymax = 5000,
]
\addplot+[
    only marks,
    very thick,
    mark=+,
    color=rust,
    mark size=2.9pt]
table[col sep=comma,x index=0,y index=1]{./figures/csv/rel_err_plots/poisson/64_2_periodic_constant_random_mild.csv};
\addlegendentry{\(N = 64^2\)}

\addplot+[
    only marks,
    very thick,
    mark=triangle,
    color=orange,
    mark size=2.9pt]
table[col sep=comma,x index=0,y index=1]{./figures/csv/rel_err_plots/poisson/128_2_periodic_constant_random_mild.csv};
\addlegendentry{\(N = 128^2\)}

\addplot+[
    only marks,
    very thick,
    mark=square,
    color=lightblue,
    mark size=2.9pt]
table[col sep=comma,x index=0,y index=1]{./figures/csv/rel_err_plots/poisson/256_2_periodic_constant_random_mild.csv};
\addlegendentry{\(N = 256^2\)}

\addplot+[
    only marks,
    very thick,
    color=seagreen,
    mark=o,
    mark size=2.9pt]
table[col sep=comma,x index=0,y index=1]{./figures/csv/rel_err_plots/poisson/512_2_periodic_constant_random_mild.csv};
\addlegendentry{\(N = 512^2\)}

\addplot+[
    only marks,
    very thick,
    color=joshua,
    mark=diamond,
    mark size=2.9pt]
table[col sep=comma,x index=0,y index=1]{./figures/csv/rel_err_plots/poisson/1024_2_periodic_constant_random_mild.csv};
\addlegendentry{\(N = 1024^2\)}

\end{axis}
    \end{tikzpicture} 
    \begin{tikzpicture}[scale=0.75]
		  	\begin{axis}[
    axis lines = left,
    xlabel = \(\rho\),
    ylabel = \(\# \mathrm{matvecs}\),
    legend pos = north west,
    xmin = 2.5, xmax = 6.5,
    ymin = 0, ymax = 12000,
]

\addplot+[
    only marks,
    very thick,
    mark=x,
    color=silver,
    mark size=2.9pt]
table[col sep=comma,x index=0,y index=1]{./figures/csv/rel_err_plots/poisson/32_3_periodic_constant_random_mild.csv};
\addlegendentry{\(N = 32^3\)}

\addplot+[
    only marks,
    very thick,
    mark=+,
    color=rust,
    mark size=2.9pt]
table[col sep=comma,x index=0,y index=1]{./figures/csv/rel_err_plots/poisson/64_3_periodic_constant_random_mild.csv};
\addlegendentry{\(N = 64^3\)}


\end{axis}
    \end{tikzpicture} 
  \end{minipage}
  \begin{minipage}{0.495 \textwidth}
    \centering
    \begin{tikzpicture}[scale=0.75]
		  	\input{figures/tikz/rel_err_plot_periodic_mild_2d.tex}
    \end{tikzpicture} 
    \begin{tikzpicture}[scale=0.75]
		  	\input{figures/tikz/rel_err_plot_periodic_mild_3d.tex}
    \end{tikzpicture} 
  \end{minipage}
    \caption{We plot the number of matrix-vector products (left) and resulting relative error in operator-$2$-norm (right) of our method for a constant coefficient Laplacian with a random potential and periodic boundary conditions, as discussed in \cref{sec:laplace_mild}. 
    The top row shows results in $[0, 1)^2$, and the bottom row in $[0, 1)^3$.}
    \label{fig:matvecplots_mild}
\end{figure}

\begin{figure}
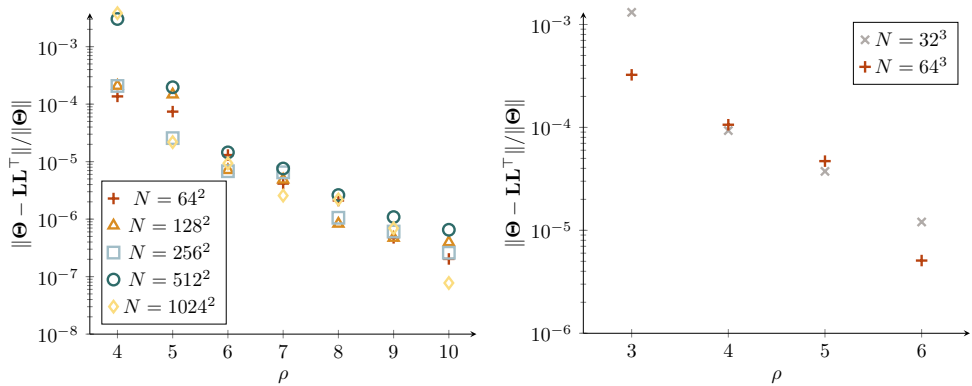

  \centering
  \begin{minipage}{0.495 \textwidth}
    \centering
    \begin{tikzpicture}[scale=0.75]
		  	\input{figures/tikz/rel_err_plot_periodic_severe_2d.tex}
    \end{tikzpicture} 
  \end{minipage}
  \begin{minipage}{0.495 \textwidth}
    \centering
    \begin{tikzpicture}[scale=0.75]
		  	\input{figures/tikz/rel_err_plot_periodic_severe_3d.tex}
    \end{tikzpicture} 
  \end{minipage}
    \caption{We plot the approximation rates of our method in the setting of rough high-contrast coefficients discussed in \cref{sec:laplace_mild}, showing that its accuracy decreases only slightly compared to \cref{fig:matvecplots_mild}. The \#matvecs per  $\rho$ are the same as in \cref{fig:matvecplots_mild}.}
    \label{fig:matvecplots_severe}
\end{figure}

\subsection{Laplacian with random potential}
\label{sec:laplace_mild}
Following \cite{lin2011fast}, we consider a Laplace operator over $[0, 1)^2$ with periodic boundary conditions, discretized over a regular grid of $N = n^2$ points.\footnote{Note that the $N$ of \cite{lin2011fast} is equivalent to the $n$ in the present work.}
Again following \cite{lin2011fast}, we add to the $i$-th diagonal entry of the matrix representation of $-\Delta$ a zeroth order term $1 + W_i$, where the $W_i \sim \operatorname{UNIF}([0, 1])$ are independent.
The zeroth-order term removes the null space of the differential operator and, by its roughness, prevents higher-order regularity of the PDE.
This allows showcasing the fact that our method, just as those of \cite{lin2011fast}, does not rely on higher-order smoothness of solutions of the PDE.
In \cref{fig:matvecplots_mild}, we have plotted the number of matrix-vector-products and the resulting relative error of our approximation as a function of $\rho$.
For the same relative accuracy of about $10^{-7}$, our method requires about half as many matrix-vector products compared to the results reported in \cite{lin2011fast}.
To demonstrate that our method is not restricted to two-dimensional problems, we provide results for the same setting, but with $N = n^3$ points in $[0, 1)^3$.

\begin{figure}
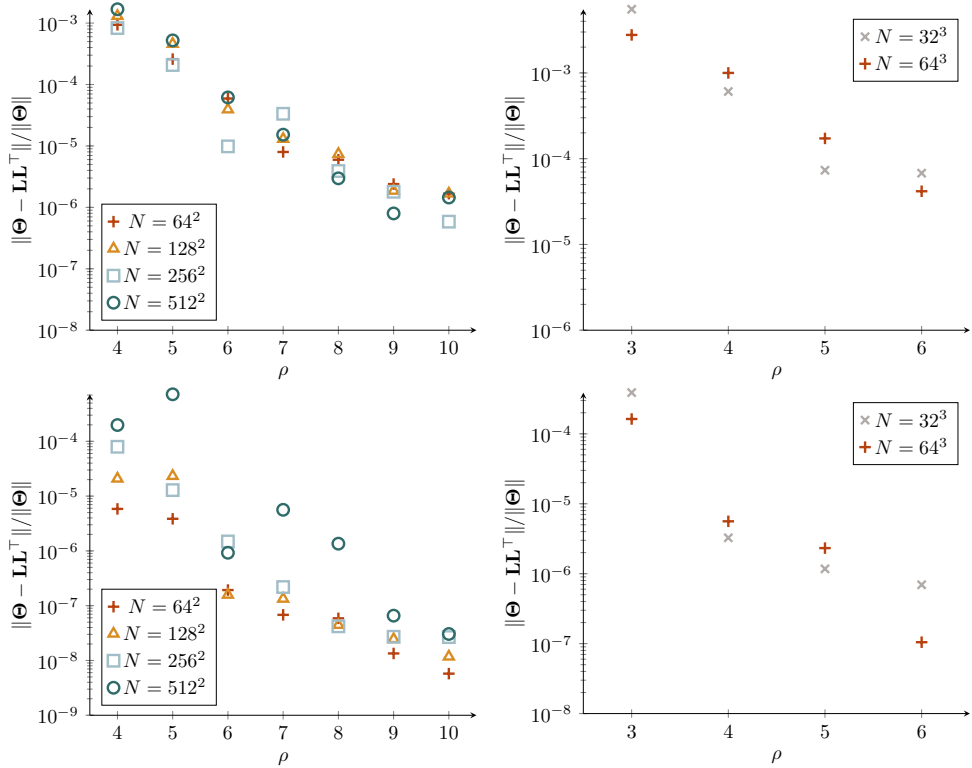

  \centering
  \begin{minipage}{0.495 \textwidth}
    \centering
    \begin{tikzpicture}[scale=0.75]
		  	\input{figures/tikz/rel_err_plot_fractional_12_2d.tex}
    \end{tikzpicture} 
    \begin{tikzpicture}[scale=0.75]
		  	\input{figures/tikz/rel_err_plot_fractional_32_2d.tex}
    \end{tikzpicture} 
  \end{minipage}
  \begin{minipage}{0.495 \textwidth}
    \centering
    \begin{tikzpicture}[scale=0.75]
		  	\input{figures/tikz/rel_err_plot_fractional_12_3d.tex}
    \end{tikzpicture} 
    \begin{tikzpicture}[scale=0.75]
		  	\input{figures/tikz/rel_err_plot_fractional_32_3d.tex}
    \end{tikzpicture} 
  \end{minipage}
    \caption{We apply our method to solution operators of the fractional Laplace equation described in \cref{sec:laplace_fractional}. 
    The first row shows the case $s = 0.5$, and the second row the case $s = 1.5$. 
    The first column shows the case $d = 2$ and the second column the case $d = 3$.}
    \label{fig:matvecplots_fractional}
\end{figure}

\subsection{Laplacian with rough coefficients}
\label{sec:laplace_severe}
To show that our methods can work under minimal regularity assumptions, we also provide results with rough coefficients. 
In the setting of \cref{sec:laplace_mild}, we obtain each conductivity coefficient as $Z_{i} + 10^{-4}$, where the $Z_i \sim \operatorname{UNIF}([0, 1])$ are independent.
The resulting problem has a contrast ratio of about $10^4$. 
As shown in \cref{fig:matvecplots_severe} the approximation rates obtained by our method are hardly affected, showing its robustness beyond the scope of our rigorous results (which are not robust to high contrast coefficients).

\subsection{Fractional Laplace operators}
\label{sec:laplace_fractional}
The exponential decay of Cholesky factors that forms the basis of our method was observed to hold even in for Mat{\'e}rn kernels corresponding to fractional-order PDEs \cite{schafer2021compression}.
Motivated by this observation, we now attempt to recover the solution operators of fractional Laplacian equations. 
We use the spectral definition of the fractional Laplacian together with the fast Fourier transform efficiently evaluate the solution operator. 
In \cref{fig:matvecplots_fractional}, we show the results of our method applied to the solution operators of the equation $u \mapsto (-\Delta)^s u  + u = f$ for $s \in \{0.5, 1.5\}$. 
Even though fractional Laplace operators are nonlocal, discretizations of their inverses have near-local Cholesky factors.

\begin{figure}
  \includegraphics[width=0.49\textwidth]{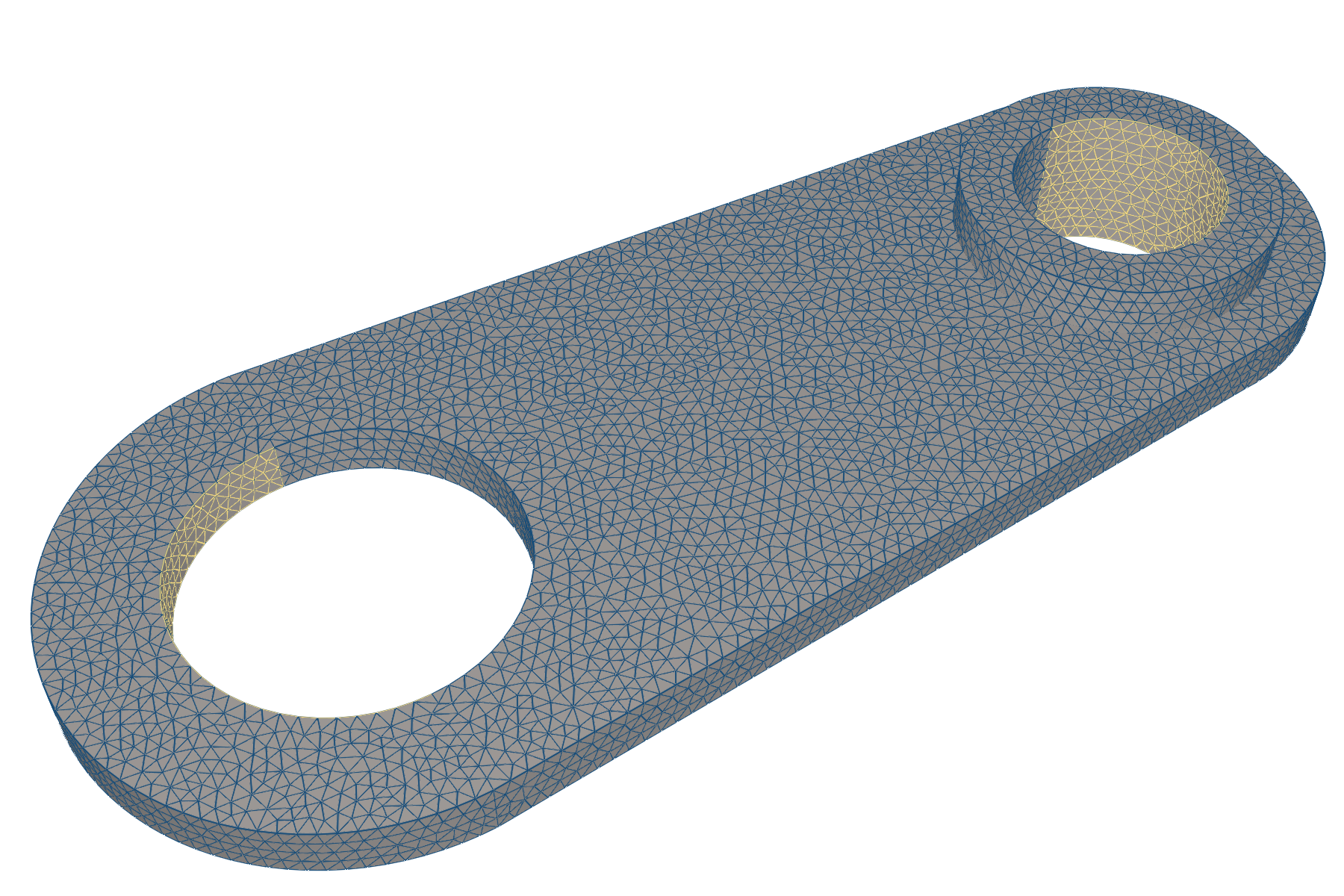}
  \includegraphics[width=0.49\textwidth]{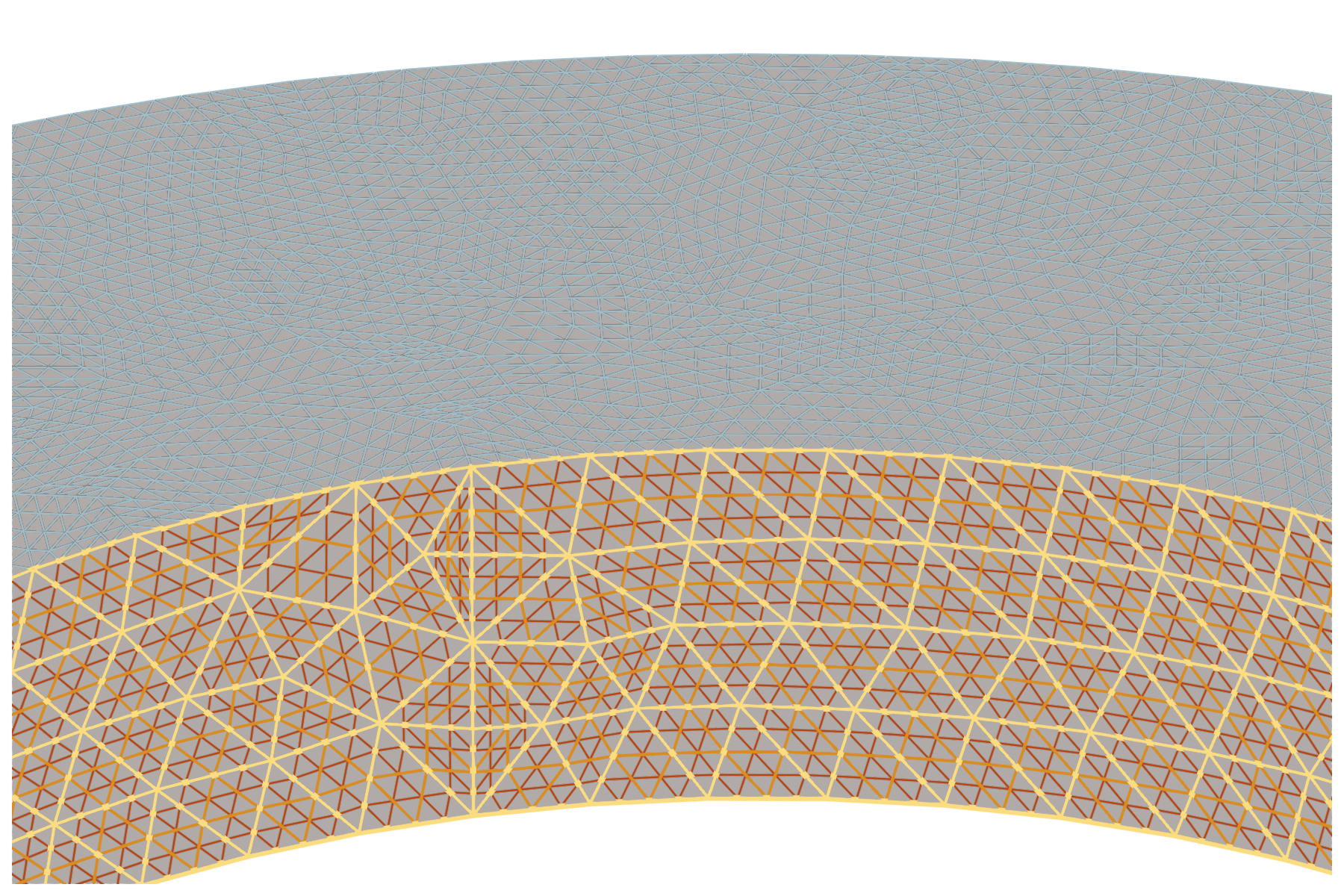}
  \caption{Left: The (volumetric) finite element model used in \cref{sec:laplace_gridap} at its coarsest resolution, Dirichlet boundary conditions (BC) marked in yellow, and Neumann BC in blue. Right: The three different levels of refinement. 
  Model and discretization provided by \cite{Badia2020}.}
  \label{fig:gridap_model}
\end{figure}

\begin{figure}
  \centering
  \begin{minipage}{0.495 \textwidth}
    \centering
    \begin{tikzpicture}[scale=0.75]
		  	\begin{axis}[
    axis lines = left,
    xlabel = \(\rho\),
    ylabel = \(\# \mathrm{matvecs}\),
    legend pos = north west,
    xmin = 2.5, xmax = 8.5,
    ymin = 0, ymax = 18000,
]

\addplot+[
    only marks,
    very thick,
    mark=x,
    color=silver,
    mark size=2.9pt]
table[col sep=comma,x index=0,y index=1]{./figures/csv/rel_err_plots/gridap/1.csv};
\addlegendentry{\(N = 9925\)}

\addplot+[
    only marks,
    very thick,
    mark=+,
    color=rust,
    mark size=2.9pt]
table[col sep=comma,x index=0,y index=1]{./figures/csv/rel_err_plots/gridap/2.csv};
\addlegendentry{\(N = 66111\)}


\addplot+[
    only marks,
    very thick,
    mark=triangle,
    color=orange,
    mark size=2.9pt]
table[col sep=comma,x index=0,y index=1]{./figures/csv/rel_err_plots/gridap/3.csv};
\addlegendentry{\(N = 478953\)}

\end{axis}
    \end{tikzpicture} 
  \end{minipage}
  \begin{minipage}{0.495 \textwidth}
    \centering
    \begin{tikzpicture}[scale=0.75]
		  	\input{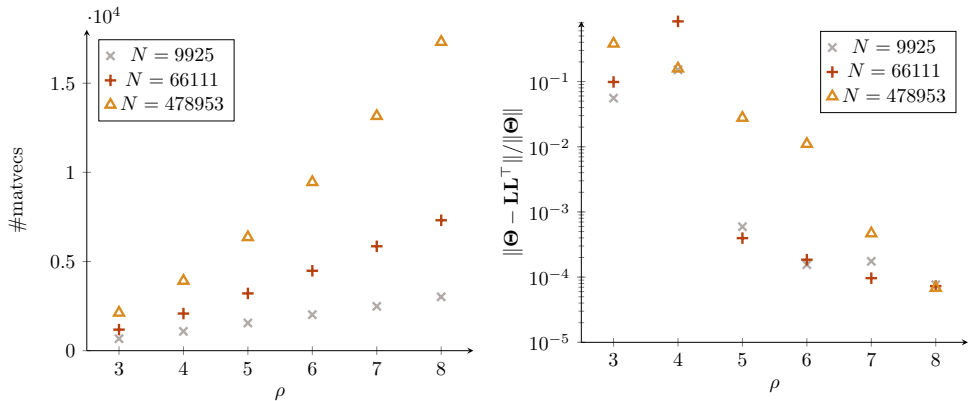}
    \end{tikzpicture} 
  \end{minipage}
    \caption{We plot the number of matrix-vector products (left) and resulting relative error in operator-$2$-norm (right) of our method for the finite-element discretization shown in \cref{fig:gridap_model}, as discussed in \cref{sec:laplace_gridap}.}
    \label{fig:matvecplots_gridap}
\end{figure}

\subsection{Finite element discretization} 
\label{sec:laplace_gridap}
To show that our method can handle finite element discretizations, mixed boundary conditions, and complex geometries, we apply our method to a Laplace operator discretized on the finite element model shown in \cref{fig:gridap_model}, with Dirichlet and Neumann boundary conditions.
We consider three levels of refinement with $N = \left\{9925, 66111, 478953\right\}$ degrees of freedom. 
\Cref{fig:gridap_model} shows the exponential decay of the approximation error as a function of $\rho$.

\section*{Acknowledgments}
This research was supported in part through research cyberinfrastructure resources and services provided by the Partnership for an Advanced Computing Environment (PACE) at the Georgia Institute of Technology, Atlanta, Georgia, USA.
Both authors gratefully acknowledge support from the Air Force Office of Scientific Research under award number FA9550-20-1-0358 (Machine Learning and Physics-Based Modeling and Simulation) and award number FA9550-18-1-0271 (Games for Computation and Learning) and from the Office of Naval Research under award number N00014-18-1-2363 (Toward scalable universal solvers for linear systems).
FS gratefully acknowledges support from the Office of Naval Research under award number N00014-23-1-2545 (Untangling Computation). We thank the anonymous referees for helping us improve the presentation of our results. 

\bibliographystyle{siamplain}
\bibliography{references}

\appendix

\section{Proofs of theoretical results}

\subsection{Prior results}
Our proofs of \cref{thm:main_thm_simplicial,thm:main_thm_supernodal,thm:low_rank} rely on results from \cite{schafer2021compression}.
The first result bounds extremal eigenvalues of $\KM$ and its Schur complements.

\begin{theorem}[Spectral localization {\cite[Theorem 5.16]{schafer2021compression}}]
  \label{thm:spectral_loc}
  In the setting of \cref{thm:main_thm_simplicial,thm:main_thm_supernodal,thm:low_rank} there exists a constant $C$, depending only on $\|\IK\|$, $\|\IK^{-1}\|$, $d$, $s$, and $\delta$ such that writing $\bar{\I}^{(k)} \coloneqq \cup_{1 \leq l \leq k} \I^{(l)}$, we have for each $1 \leq k < l \leq q$ that 
  \begin{align*}
    \frac{1}{C} h^{2ks} &\leq \lambda_{\min}\left(\KM_{\bar{\I}^{(l)}, \bar{\I}^{(k)}}\right), \quad  \frac{1}{C} h^{2qs} \leq \lambda_{\min}\left(\KM\right)\\
    C h^{2ks} &\geq \lambda_{\max}\left(\KM_{\I^{(l)}, \I^{(l)}} - \KM_{\I^{(l)}, \bar{\I}^{(k)}} \left(\KM_{\bar{\I}^{(k)}, \bar{\I}^{(k)}}\right)^{-1} \KM_{\bar{\I}^{(k)}, \I^{(l)}} \right),
  \end{align*}
  Where $\lambda_{\min}$ and $\lambda_{\max}$ denote the smallest respectively largest value of their (symmetric and positive-definite) arguments.
\end{theorem}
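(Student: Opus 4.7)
\medskip

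\noindent\textbf{Proof plan for Theorem 4.1 (Spectral localization).}
The plan is to translate each spectral statement about $\KM$ (a Gram matrix of $\IK^{-1}$ in the basis $\{w_i\}$) into a functional analytic statement about $\IK^{-1}$, and then exploit the multiresolution structure via Bramble--Hilbert and inverse inequalities. Throughout, for $c\in\Reals^{\bar\I^{(k)}}$ write $u_c \coloneqq \sum_{i\in\bar\I^{(k)}} c_i w_i \in V^{(k)}$, and note that orthonormality of $\{w_i\}$ gives $\|u_c\|_{L^2}=\|c\|_{\ell^2}$ and $c^\top \KM c = \langle u_c, \IK^{-1} u_c\rangle_{L^2}$. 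The hypotheses on $\IK$ (linear, self-adjoint, invertible, bounded) yield the two-sided bound
\begin{equation*}
\frac{1}{C_{\IK}}\|u\|_{H^{-s}}^2 \;\leq\; \langle u, \IK^{-1} u\rangle \;\leq\; C_{\IK}\|u\|_{H^{-s}}^2 \qquad \forall u\in L^2(\Omega),
\end{equation*}
with a constant depending only on $\|\IK\|$ and $\|\IK^{-1}\|$. This reduces every spectral bound to a norm-equivalence statement relating $\|\cdot\|_{H^{-s}}$ on $V^{(k)}$ (or its suitable subspace) to the scaled $L^2$ norm.

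First I would establish the two key scale-dependent norm equivalences on the multiresolution basis:
\begin{equation*}
  \|u\|_{H^{-s}} \;\geq\; c_1 h^{ks}\, \|u\|_{L^2} \quad \text{for all } u\in V^{(k)},
\end{equation*}
\begin{equation*}
  \|u\|_{H^{-s}} \;\leq\; c_2 h^{ks}\, \|u\|_{L^2} \quad \text{for all } u\in (V^{(k-1)})^{\perp_{L^2}}.
\end{equation*}
The upper bound on $(V^{(k-1)})^\perp$ follows by duality: any $\phi\in H^s_0$ can be replaced by $\phi - \Pi^{(k-1)}\phi$ (since $u$ is orthogonal to $V^{(k-1)}$), and the $L^2$-projection satisfies the Bramble--Hilbert estimate $\|\phi - \Pi^{(k-1)}\phi\|_{L^2}\leq C h^{(k-1)s}\|\phi\|_{H^s}$ over convex sets of bounded aspect ratio (this is where the convexity and regularity parameter $\delta$ are used). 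The lower bound on $V^{(k)}$ is an \emph{inverse} inequality: one constructs a test function $\phi_u\in H^s_0$ by mollifying $u$ at scale $h^k$, obtaining $\langle u,\phi_u\rangle\gtrsim\|u\|_{L^2}^2$ and $\|\phi_u\|_{H^s}\lesssim h^{-ks}\|u\|_{L^2}$.

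With these in hand, the two claims follow quickly. For $\lambda_{\min}(\KM_{\bar\I^{(k)},\bar\I^{(k)}})$: for any $c$, $c^\top\KM c \geq C_{\IK}^{-1}\|u_c\|_{H^{-s}}^2 \geq C_{\IK}^{-1} c_1^2 h^{2ks}\|c\|_{\ell^2}^2$. The analogous bound for $\lambda_{\min}(\KM)$ at $k=q$ uses $h^q\approx h_{\min}$. For the Schur complement, I would use its variational characterization
\begin{equation*}
  c^\top \bigl(\KM_{\I^{(l)},\I^{(l)}} - \KM_{\I^{(l)},\bar\I^{(k)}}\KM_{\bar\I^{(k)},\bar\I^{(k)}}^{-1}\KM_{\bar\I^{(k)},\I^{(l)}}\bigr)c \;=\; \min_{d\in\Reals^{\bar\I^{(k)}}} \langle u_c+u_d, \IK^{-1}(u_c+u_d)\rangle,
\end{equation*}
and simply take $d=0$. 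Since $u_c\in W^{(l)}\subset (V^{(l-1)})^\perp\subset (V^{(k)})^\perp$ for $k\leq l-1$, the upper bound above yields $\|u_c\|_{H^{-s}}\leq c_2 h^{ks}\|u_c\|_{L^2}$, and therefore the Schur-complement entry is at most $C_{\IK}c_2^2 h^{2ks}\|c\|_{\ell^2}^2$, as required.

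The main obstacle is the inverse inequality $\|u\|_{H^{-s}}\geq c h^{ks}\|u\|_{L^2}$ on $V^{(k)}$: the piecewise-constant functions in $V^{(k)}$ are not in $H^s$ for $s\geq 1$, so the mollification step must be carried out carefully at scale $h^k$ while preserving both the inner product with $u$ and the $H^s$-bound. This is where the quantitative geometry of the partition $\tau^{(k)}$ (convex cells containing a ball of radius $\delta h^k$ and contained in a ball of radius $h^k$) is crucial, since one must construct smooth bumps respecting these scales with uniform constants. The remaining steps—$L^2$-projection error estimates and invoking coercivity—are routine given the hypotheses of Section~\ref{sec:rigorous_setting}.
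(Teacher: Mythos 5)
You should first note that the paper does not actually prove this statement: it is quoted verbatim from \cite{schafer2021compression} (their Theorem 5.16) in the ``Prior results'' subsection, so the only proof to compare against is the one in that reference. Your overall reduction is the same in spirit as that proof: use the energy identity $c^{\top}\KM c=\langle u_c,\IK^{-1}u_c\rangle$ together with $\langle u,\IK^{-1}u\rangle\approx\|u\|_{H^{-s}}^2$, then turn the eigenvalue claims into scale-dependent comparisons of $\|\cdot\|_{H^{-s}}$ with $h^{ks}\|\cdot\|_{L^2}$ proved by Poincar\'e/Bramble--Hilbert inequalities on the convex cells (this is exactly why the paper assumes convexity), and treat the Schur complement through its variational characterization. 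For $s=1$ your plan is essentially complete, modulo the boundary/cutoff care in the mollification step that you flag yourself.

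There is, however, a genuine gap for $s\geq 2$, which the theorem covers (arbitrary order is the point of the paper). Your second norm equivalence, and the Bramble--Hilbert step behind it, are false for piecewise constants: the $L^2$-projection $\Pi^{(k)}$ onto $V^{(k)}$ has approximation order one, so $\|\phi-\Pi^{(k)}\phi\|_{L^2}\leq Ch^{ks}\|\phi\|_{H^s}$ fails for $s\geq2$, and so does $\|u\|_{H^{-s}}\leq Ch^{ks}\|u\|_{L^2}$ on $(V^{(k)})^{\perp}$. Concretely, $u=\Pi^{(k+1)}\phi-\Pi^{(k)}\phi$ for a fixed smooth $\phi$ lies in $W^{(k+1)}\subset(V^{(k)})^{\perp}$, satisfies $\langle u,\phi\rangle=\|u\|_{L^2}^2$ with $\|u\|_{L^2}\sim h^{k}$, and hence $\|u\|_{H^{-s}}\gtrsim h^{k}\|u\|_{L^2}\gg h^{ks}\|u\|_{L^2}$. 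Consequently your choice $d=0$ in the Schur-complement minimization cannot deliver the claimed $Ch^{2ks}$ bound when $s\geq2$: the diagonal block $\KM_{\I^{(l)},\I^{(l)}}$ alone has $\lambda_{\max}$ of order $h^{2(l-1)}$, not $h^{2(l-1)s}$, so the coarse-scale correction is not a convenience but the mechanism that produces the extra powers of $h$. The repair (and, in essence, what the cited proof does) is to keep the minimization: writing $S$ for the Schur complement, duality gives
\begin{equation*}
  c^{\top}S c \;\approx\; \Bigl(\sup\bigl\{\langle u_c,\phi\rangle \,:\, \phi\in H^s_0(\Omega),\ \|\phi\|_{H^s}\leq 1,\ \textstyle\int_t\phi=0 \ \text{for all } t\in\tau^{(k)}\bigr\}\Bigr)^{2},
\end{equation*}
and for test functions with vanishing averages on every cell of $\tau^{(k)}$ one has the higher-order Poincar\'e inequality $\|\phi\|_{L^2}\leq Ch^{ks}\|\phi\|_{H^s}$ (the first-order Poincar\'e inequality on convex cells bootstrapped through Sobolev interpolation), which yields $c^{\top}Sc\leq Ch^{2ks}\|c\|^2$. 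Your $\lambda_{\min}$ half (inverse inequality on $V^{(k)}$ via mollification at scale $\sim\delta h^k$) is the right idea, but the Schur half needs to be rebuilt along these lines rather than via an approximation property of $\Pi^{(k)}$ that piecewise constants do not possess. (A minor point: you silently read the rectangular $\KM_{\bar{\I}^{(l)},\bar{\I}^{(k)}}$ in the statement as the principal block $\KM_{\bar{\I}^{(k)},\bar{\I}^{(k)}}$, which is surely the intended meaning.)
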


The second result describes the exponential decay of the Cholesky factors of $\KM$ that enables the exponential accuracy of our approximation.

\begin{theorem}[Exponential decay {\cite[Theorem 5.23]{schafer2021compression}}]
  \label{thm:spatial_loc}
  In the setting of \cref{thm:main_thm_simplicial,thm:main_thm_supernodal,thm:low_rank} there exist constants $C$, $\alpha$, and $\gamma$, depending only on $\|\IK\|$, $\|\IK^{-1}\|$, $d$, $s$, $h$, and $\delta$, such that the entries of the exact lower triangular Cholesky factor $\dL$ of $\KM$ satisfy
  \begin{equation*}
    \left|\dL_{ij}\right| \leq C N^{\alpha}  \exp\left(- h^{-(\min(k,l))}\gamma \dist\left(t(\dw_{i}), t(\dw_{j})\right)\right)
  \end{equation*}
\end{theorem}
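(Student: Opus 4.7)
The strategy is to relate the Cholesky factor of $\KM$ to operator-adapted (``gamblet'') wavelets and exploit their exponential localization. Writing $\Phi_i := \IK^{-1}\cw_i$, one has $\KM_{ij} = \langle \cw_i, \Phi_j\rangle_{L^2} = \langle \Phi_i, \Phi_j\rangle_{\IK}$, where $\langle u,v\rangle_{\IK} := \langle u, \IK v\rangle_{L^2}$ is the energy inner product. Thus $\KM$ is the Gram matrix of $\{\Phi_i\}_{i\in\I}$ in the energy inner product, and the lower-triangular Cholesky factorization in the coarse-to-fine ordering is exactly Gram--Schmidt orthogonalization of $\{\Phi_i\}_{i\in\I}$ in that inner product. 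Letting $\tilde\Phi_j := \Phi_j - \sum_{i\prec j} c_{ij}\Phi_i$ be the energy-orthogonal residual, the identity $\dL_{ij} = \langle \cw_i, \tilde\Phi_j\rangle_{L^2}/\|\tilde\Phi_j\|_{\IK}$ is the bridge between matrix algebra and spatial PDE arguments.

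The core of the argument is to show that, for $j \in \I^{(\ell)}$, the gamblet $\tilde\Phi_j$ decays exponentially away from $t(\cw_j)$ on the scale $h^{\ell-1}$. Using the variational characterization of $\tilde\Phi_j$ as the $\IK$-energy minimizer subject to matching the coarse moments $\langle \cw_i, \cdot\rangle_{L^2}$ for $i \preceq j$, I would run a Caccioppoli-type iteration: multiplying the Euler--Lagrange equation by $\eta^2 \tilde\Phi_j$, where $\eta$ is a smooth cutoff supported outside a ball of radius $r$ around $t(\cw_j)$ and satisfying $\|\nabla\eta\|_\infty \leq C/h^{\ell-1}$, coercivity of $\IK$ together with control of the commutator $[\IK,\eta^2]$ yields an energy recursion in which $\|\tilde\Phi_j\|_{\IK,\Omega\setminus B_r}$ shrinks by a uniform factor $\theta < 1$ each time $r$ grows by $\mathcal{O}(h^{\ell-1})$. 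The uniformity of $\theta$ across scales comes from scale-invariance of Poincar{\'e} constants on convex sets of controlled aspect ratio, exactly the role played by the parameters $h$ and $\delta$ in \cref{sec:rigorous_setting}. Iterating the recursion yields exponential decay at rate $\gamma h^{-(\ell-1)}$.

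Given this decay, the entry bound follows by Cauchy--Schwarz applied on the support of $\cw_i$: $|\dL_{ij}| \leq \|\cw_i\|_{L^2}\,\|\tilde\Phi_j\|_{L^2(t(\cw_i))}/\|\tilde\Phi_j\|_{\IK}$. The numerator inherits the exponential decay from the Caccioppoli iteration after converting the $\IK$-energy of $\tilde\Phi_j$ on the far field to an $L^2$ norm via a (scaled) Poincar{\'e} inequality. The denominator is bounded below by $c\,h^{\ell s}$ because $\|\tilde\Phi_j\|_{\IK}^2$ equals the $(j,j)$-th diagonal entry of the Schur complement whose minimum eigenvalue is controlled by \cref{thm:spectral_loc}. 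The polynomial prefactor $N^{\alpha}$ aggregates the inverse $h$-powers from these normalizations, the counting of basis functions per scale, and constants in the scaled Sobolev embeddings used to convert between $L^2$ and $\IK$-energy norms.

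The main obstacle is making the Caccioppoli iteration rigorous for arbitrary operator order $2s$. For $s=1$ the classical product rule suffices, but for $s>1$ the commutator $[\IK,\eta^2]$ is a differential operator of order $2s-1$, and bounding it requires either higher-regularity cutoffs or a reformulation via a mixed variational principle; in either case one must show that commutator contributions scale like $h^{\ell-1}/r$ relative to the coercive leading term so they can be absorbed once $r \gg h^{\ell-1}$. A secondary subtlety is reconciling the $\min(k,\ell)$ in the statement: since $\dL$ is lower triangular in the coarse-to-fine ordering, $\dL_{ij}$ is nonzero only when $\ell \leq k$, so $\min(k,\ell)=\ell$ is precisely the scale of the ``column'' gamblet $\tilde\Phi_j$, and this matches the rate produced by the Caccioppoli iteration on $\tilde\Phi_j$ above.
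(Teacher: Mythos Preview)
The paper does not prove this theorem: it is listed under ``Prior results'' and quoted verbatim from \cite[Theorem~5.23]{schafer2021compression}, so there is no in-paper proof to compare against. Your outline is, at the high level, the strategy used in that reference and in the gamblet literature it builds on: identify Cholesky with energy-norm Gram--Schmidt of $\Phi_i = \IK^{-1}\cw_i$, recognize the resulting $\tilde\Phi_j$ (up to normalization) as the energy minimizer subject to $\langle \cw_i,\cdot\rangle_{L^2}=\delta_{ij}$ for $i\preceq j$, and deduce the entry bound from exponential spatial decay of this minimizer together with the Schur-complement eigenvalue bounds of \cref{thm:spectral_loc}.

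There is one genuine gap in your Caccioppoli step, and it is not the higher-order commutator issue you flag. The Euler--Lagrange equation reads $\IK\tilde\Phi_j = \sum_{i\preceq j}\lambda_i \cw_i$, and the right-hand side is \emph{not} supported near $t(\cw_j)$: the coarse-level $\cw_i$ cover all of $\Omega$. Testing naively with $\eta^2\tilde\Phi_j$ therefore produces a nonvanishing boundary term $\sum_i \lambda_i\langle \cw_i,\eta^2\tilde\Phi_j\rangle_{L^2}$ that the recursion cannot absorb. The fix used in the cited work (and in the localized orthogonal decomposition literature it draws on) is to test with $\eta\tilde\Phi_j$ \emph{minus its quasi-interpolant onto the coarse space} $V^{(\ell-1)}$; this kills the Lagrange-multiplier term, and it is precisely the moment conditions $\langle \cw_i,\tilde\Phi_j\rangle=0$ for $i\prec j$ that let one control the interpolation error via the scaled Poincar\'e inequality on elements of $\tau^{(\ell-1)}$. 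Your remark about uniform Poincar\'e constants on convex sets of bounded aspect ratio is exactly the ingredient needed here, but it enters through the interpolation-error estimate rather than through a commutator bound. With that correction, the rest of your outline (including the resolution of $\min(k,\ell)=\ell$ via lower-triangularity) is sound.
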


\subsection{Proof of \texorpdfstring{\cref{thm:main_thm_simplicial}}.}
\label{sec:proof_simplicial}
We denote as $\S_\rho \subset \I \times \I$ the sparsity pattern of $\dL$ as produced by \cref{alg:Cholesky}. 
For $i \succ j$, the entry $\dL_{ij}$ is part of the sparsity pattern if and only if $t(\dw_{i})$ is closest to $t(\dw_{j})$ out of all other basis functions of the same color.
For $i, \iota \in c \subset \colors^{(k)}, i \neq \iota$ sharing the same color, we have $\dist\left(t(\dw_{i}), t(\dw_{\iota})\right) \geq \rho 2 h^{k - 1}$. 
Together with the triangle inequality, this implies that for $i \in \I^{(k)}$ and $j \in \I^{(l)}$,
\begin{equation}
  \label{eqn:sparsity_decay}
j \preceq i, (i, j) \notin \S_{\rho} \Rightarrow \dist\left(t(\dw_{i}), t(\dw_{j})\right) \geq \rho h^{l-1} - h^{k-1} \geq (\rho - 1) h^{l - 1}.  
\end{equation}
For $S \subset \I \times \I$, the truncation operator $A \mapsto \trunc_{\S}(A)$ takes in a matrix $A \in \I \times \I$ and returns a matrix $\trunc_{\S}(A)$ that is equal to $A$ on $\S$ and equal to zero, everywhere else.
We can then write 
\begin{equation*}
  \KM = \underbrace{\trunc_{\S_{\rho}}(\dL) \left(\trunc_{\S_{\rho}}(\dL)\right)^{\top}}_{\bar{\KM} \coloneqq} + \underbrace{\KM - \bar{\KM}}_{-E \coloneqq}. 
\end{equation*} 
By completing the square and using \cref{thm:spatial_loc}, \cref{thm:spectral_loc}, and \cref{eqn:sparsity_decay}, we obtain
\begin{footnotesize}
\begin{align*}
  \left\|E\right\| &= \left\| \left(\dL - \trunc_{\S_{\rho}}\left(\dL\right)\right) \dL^{\top} + \dL \left(\dL - \trunc_{\S_{\rho}}\left(\dL\right)\right)^{\top} + \left(\dL - \trunc_{\S_{\rho}}\left(\dL\right)\right) \left(\dL - \trunc_{\S_{\rho}}\left(\dL\right)\right)^{\top}\right\|\\
  &\leq \bar{C} N^{\bar{\alpha}}  \exp\left(- \rho \bar{\gamma} \right),
\end{align*}
\end{footnotesize}
where $\bar{C}$, $\bar{\alpha}$, and $\bar{\gamma}$ have the same dependences as the original $C$, $\alpha$, and $\gamma$.
It is left to show that applying \cref{alg:Cholesky} $\bar{\KM}$ correctly recovers the \emph{exact} Cholesky $\trunc_{\S_{\rho}}\left(\dL\right)$ of $\bar{\KM}$.
This can be proved by induction over the for-loop of \cref{alg:Cholesky}.
Assume that for a given color $c \in \colors$, the columns corresponding to indices of all prior colors were reconstructed correctly. 
Denote as $k$ the index of the last column that has been reconstructed already and denote as $c_\succ$ the color following $c$. 
By assumption, the argument of $\texttt{scatter}$ can then be reformulated as
\begin{align*}
  \dO_{:, c} - \dL \dL^{\top} \dM_{:, c} 
  =& \bar{\KM} \dM_{:,c} - \left(\bar{\KM}_{:, 1:k} \left(\bar{\KM}_{1 : k, 1 : k}\right)^{-1} \bar{\KM}_{1:k, :}\right) \dM_{:, c} \\
  =& \underbrace{\left( \bar{\KM} - \bar{\KM}_{:, 1:k} \left(\bar{\KM}_{1 : k, 1 : k}\right)^{-1} \bar{\KM}_{1:k, :}\right)}_{\schur^{k} \coloneqq}\dM_{:, c} .
\end{align*}
It is well known that the $(k+1)$-st column of the \emph{Schur complement} $\schur^k$ is equal to the $k+1$-st column of the Cholesky factor, up to a diagonal scaling. 
Furthermore, we observe that eliminating the $(k+1)$-st row and column does not impact any rows and columns of $\schur^k$ that are part of the same color $c^{+}$ since, in general, an entry $(i,j)$ can not be in the sparsity set $\S_\rho$ if $i$ and $j$ are not in the same color.
In particular, the nonzeros on columns in $c^{+}$ are located on the sparsity set $\S_{\rho}$ and thus multiplication with $\dM_{:, c^{+}}$ followed by $\texttt{scatter}$ recovers these columns of $\schur^k$ exactly. 
After diagonal normalization, we thus exactly recover the columns of $\dL$ in color $c^{+}$.
The result then follows by induction over the colors in $\colors$.

\subsection{Proof of \texorpdfstring{\cref{thm:main_thm_supernodal}}.}
\label{sec:proof_supernodal}
We now use the limited number of supernodal colors to rigorously bound the error propagation in \cref{alg:SuperCholesky}, closely following the proof of \cite[Theorem 5.27]{schafer2021compression}.
In the following, we assume that all supernodes in a given color $\tilde{c}$ have the same size and thus sums of the form $\sum_{\tilde{i} \in \tilde{c}}\KM_{\tilde{i}\tilde{j}}$ are well-defined. 
Our proof can easily be extended beyond this case by truncating or zero-padding supernodal entries accordingly, at the cost of additional notational complexity.

Analogously to the proof of \cref{thm:main_thm_simplicial}, we denote as $\tilde{\S}_{\rho} \subset \tilde{\I} \times \tilde{\I}$ the supernodal sparsity pattern of the Cholesky factor computed by \cref{alg:SuperCholesky}, and let $\trunc_{\tilde{\S}_{\rho}}$ denote the operation of truncating a given supernodal block-matrix to this sparsity pattern. 
As before, we then denote $\bar{\KM} = \KM + E$ as a perturbation of the Cholesky factorization that has \emph{exactly} sparse Cholesky factors according to the pattern $\tilde{\S}$.

We will show that there exist constants $\bar{C}, \bar{\alpha} > 0$ depending only on $\|\IK\|$, $\|\IK^{-1}\|$, $d$, $s$, $h$, and $\delta$ such that for $\epsilon < \bar{C}^{-1} N^{-\bar{\alpha}}$ and $\|E\| \leq \epsilon$, \cref{alg:SuperCholesky} applied to $\KM$ terminates without encountering a non- positive-definite diagonal block and such that the resulting Cholesky factor $\dL$ satisfies $\|\dL \dL^{\top} - \bar{\KM}\| \epsilon C N^{\bar{\alpha}}$.

We prove this by controlling how successive columns of the Cholesky factor $\dL$ computed by \cref{alg:SuperCholesky} diverge from those of the factors $\bar{\dL}$ of the perturbed matrix $\bar{\KM}$ during the factorization. 
We introduce the following notation. 
For $\tilde{c} \in \supercolors$, we denote additive updates given by the $\tilde{c}$-th column of the Cholesky factors as
\begin{equation*}
  \bar{\addschur}^{\tilde{c}} \coloneqq \bar{\dL}_{:, \tilde{c}}\bar{\dL}_{:, \tilde{c}}^{\top} 
  \quad 
  \text{and}
  \quad 
  \addschur^{\tilde{c}} \coloneqq \dL_{:, \tilde{c}}\dL_{:, \tilde{c}}^{\top}.
\end{equation*}
We also introduce notation for the (approximate) Schur complements 
\begin{equation*}
  \schur^{\tilde{c}} \coloneqq \KM - \sum \limits_{\tilde{c}^- \preceq\tilde{c}}  \addschur^{\tilde{c}^-}, 
  \quad 
  \bar{\schur}^{\tilde{c}} \coloneqq \bar{\KM} - \sum \limits_{\tilde{c}^- \preceq\tilde{c}}  \bar{\addschur}^{\tilde{c}^-}. 
\end{equation*}
While $\bar{\schur}^{\tilde{c}}$ coincides with the exact Schur complement of $\bar{\KM}$ after eliminating the colors up to $\tilde{c}$ and thus coincides with $\sum_{\tilde{c}^+ \succ \tilde{c}} \bar{\addschur}^{\tilde{c}^+}$, $\schur^{\tilde{c}}$ only approximates the Schur complement of $\KM$, and is thus only approximately equal to $\sum_{\tilde{c}^+ \succ \tilde{c}} \addschur^{\tilde{c}^+}$.
The key result that allows us to prove \cref{thm:main_thm_supernodal} is the following lemma that bounds the rate at which $\schur^{\tilde{c}}$ and $\bar{\schur}^{\tilde{c}}$ diverge as we iterate through the supernodal colors $\tilde{c} \in \supercolors$.
It will be used recursively to provide upper bounds $\epsilon^{(k)}$ for the error on each scale $k$, in terms of the $\big\{\epsilon^{(l)}\big\}_{0 \leq l < k}$ on coarser scales.

\begin{lemma}
  \label{lem:error_propagation}
  There exists a constant $g$ depending only on $d$, such that for $\rho \geq 1$, the following holds.

  For a given color $\tilde{c} \in \supercolors^{(p)}$ let $\left\{\epsilon^{(k)}\right\}_{0 \leq k \leq q}$ be such that for each $1 \leq k \leq q$ and $\tilde{c}^{-} \in \supercolors^{(k)}$ with $\tilde{c}^{-} \preceq \tilde{c}$, we have
  
  \noindent \begin{minipage}{0.56\linewidth}
   \begin{equation}
    \label{eqn:propagation_assumption_1}
  \max \limits_{\tilde{c}^{-} \preceq \tilde{\xi}, \tilde{\chi} \in \supercolors} \max \limits_{\tilde{i} \in \tilde{\xi}, \tilde{j} \in \chi} \left\| \addschur^{\tilde{c}^{-}}_{\tilde{i}\tilde{j}} - \bar{\addschur}^{\tilde{c}^{-}}_{\tilde{i}\tilde{j}} \right\|_{\fro} \leq \epsilon^{(k)} \quad \text{and} \quad
    \end{equation}     
  \end{minipage}
  \begin{minipage}{0.40\linewidth}
    \begin{equation}
        \label{eqn:propagation_assumption_2}
        \max \limits_{\tilde{i}, \tilde{j} \in \tilde{\I}} \left\| \KM_{\tilde{i} \tilde{j}} - \bar{\KM}_{\tilde{i} \tilde{j}} \right\|_{\fro} \leq \epsilon^{(0)}.
    \end{equation}
  \end{minipage}\par\vspace{\belowdisplayskip}
  Denoting as $\|\cdot\|$ the operator norm and as $\tilde{c}^{+}$ the direct successor of $\tilde{c}$, we define
  \begin{equation*}
    \epsilon \coloneqq g^2 \left( \sum \limits_{k = 0}^{p} h^{d(p - k)} \epsilon^{(k)} \right), \quad
    \lambda_{\min} \coloneqq \lambda_{\min} \left(\bar{\KM}_{\tilde{c}^+ \tilde{c}^+}\right), \quad \text{and} \quad \lambda_{\max} \coloneqq \max_{\tilde{c}^+ \preceq \tilde{\xi} \in \supercolors} \left\|\bar{\schur}^{\tilde{c}}_{\tilde{c}^{+} \tilde{\xi}}\right\|.
  \end{equation*}
   Assume that $\epsilon \leq \frac{\lambda_{\min}}{2}$.
  We then have
  \begin{equation*}
    \max \limits_{\tilde{c} \preceq \tilde{\xi}, \tilde{\chi} \in \supercolors} \max \limits_{\tilde{i} \in \tilde{\xi}, \tilde{j} \in \chi} \left\| \schur^{\tilde{c}^{+}}_{\tilde{i} \tilde{j}} - \bar{\schur}^{\tilde{c}^{+}}_{\tilde{i} \tilde{j}} \right\|_{\fro} \leq \left(1 + \frac{3}{2} \frac{\lambda_{\max}}{\lambda_{\min}} + \frac{9}{2} \frac{\lambda_{\max}^2}{\lambda_{\min}^2}\right) \epsilon.
  \end{equation*}
\end{lemma}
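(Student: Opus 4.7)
The plan is to split the claim into two parts: first, an \emph{inherited} bound $\|\schur^{\tilde{c}}_{\tilde{i}\tilde{j}} - \bar{\schur}^{\tilde{c}}_{\tilde{i}\tilde{j}}\|_{\fro} \leq \epsilon$ on the preceding Schur complement, and second, an \emph{amplification} estimate for the single block Cholesky step producing $\addschur^{\tilde{c}^+}$. The telescoping identity $\schur^{\tilde{c}^+} = \schur^{\tilde{c}} - \addschur^{\tilde{c}^+}$, together with its bar-analogue, then yields the conclusion by the triangle inequality, with the factor $(1 + \tfrac{3}{2}\lambda_{\max}/\lambda_{\min} + \tfrac{9}{2}\lambda_{\max}^2/\lambda_{\min}^2)$ arising from the three distinct contributions described below.

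For the inherited bound I would begin from the identity
\begin{equation*}
  \schur^{\tilde{c}}_{\tilde{i}\tilde{j}} - \bar{\schur}^{\tilde{c}}_{\tilde{i}\tilde{j}} = \bigl(\KM_{\tilde{i}\tilde{j}} - \bar{\KM}_{\tilde{i}\tilde{j}}\bigr) - \sum_{\tilde{c}^- \preceq \tilde{c}} \bigl(\addschur^{\tilde{c}^-}_{\tilde{i}\tilde{j}} - \bar{\addschur}^{\tilde{c}^-}_{\tilde{i}\tilde{j}}\bigr),
\end{equation*}
apply the triangle inequality, and split the sum by scale $k$. The initial term is controlled by \cref{eqn:propagation_assumption_2}, each summand on scale $k$ by \cref{eqn:propagation_assumption_1}. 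A geometric count based on the supernode spacing $\rho h^k$ and the Cholesky sparsity radius $\rho h^{k-1}$ limits the number of scale-$k$ supernodes $\tilde{c}^-$ whose Cholesky column contains both $\tilde{i}$ and $\tilde{j}$ in its sparsity pattern, and the block structure of $\addschur^{\tilde{c}^-}$ restricted to the fine-scale sub-block $(\tilde{i}, \tilde{j})$ contributes the factor $h^{d(p-k)}$. Absorbing all dimension-only constants into $g^2$ yields the claimed $\epsilon$.

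For the amplification step, the supernodal coloring together with the sparsity of $\bar{\dL}$ ensures that the scatter operation in \cref{alg:SuperCholesky} unambiguously identifies each row supernode $\tilde{i}$ with the correct column supernode in $\tilde{c}^+$: same-color supernodes are separated by at least $2\rho h^{p-1}$ while each Cholesky column has sparsity radius $\rho h^{p-1}$, so the nearest-neighbor rule picks the right origin. Consequently $\dL_{:, \tilde{c}^+}$ equals $\schur^{\tilde{c}}_{:, \tilde{c}^+}$ right-multiplied by the inverse symmetric square root of the symmetrized diagonal block, and analogously for $\bar{\dL}_{:, \tilde{c}^+}$. Writing $\Delta \coloneqq \schur^{\tilde{c}}_{:, \tilde{c}^+} - \bar{\schur}^{\tilde{c}}_{:, \tilde{c}^+}$ and $D_{\Delta} \coloneqq \schur^{\tilde{c}}_{\tilde{c}^+ \tilde{c}^+} - \bar{\schur}^{\tilde{c}}_{\tilde{c}^+ \tilde{c}^+}$, both controlled block-wise by $\epsilon$ and hence in operator norm by $\epsilon$, the condition $\epsilon \leq \lambda_{\min}/2$ ensures invertibility of the diagonal block via a Neumann expansion that yields $\|(\bar{\schur}^{\tilde{c}}_{\tilde{c}^+ \tilde{c}^+} + D_{\Delta})^{-1} - (\bar{\schur}^{\tilde{c}}_{\tilde{c}^+ \tilde{c}^+})^{-1}\| \leq 2\epsilon/\lambda_{\min}^2$. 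Expanding
\begin{equation*}
  \addschur^{\tilde{c}^+} - \bar{\addschur}^{\tilde{c}^+} = \schur^{\tilde{c}}_{:, \tilde{c}^+} \bigl(\schur^{\tilde{c}}_{\tilde{c}^+ \tilde{c}^+}\bigr)^{-1} \bigl(\schur^{\tilde{c}}_{:, \tilde{c}^+}\bigr)^{\top} - \bar{\schur}^{\tilde{c}}_{:, \tilde{c}^+} \bigl(\bar{\schur}^{\tilde{c}}_{\tilde{c}^+ \tilde{c}^+}\bigr)^{-1} \bigl(\bar{\schur}^{\tilde{c}}_{:, \tilde{c}^+}\bigr)^{\top},
\end{equation*}
one separates three types of contributions whose coefficients give the three terms of the lemma: a pure $\Delta \bigl(\bar{\schur}^{\tilde{c}}_{\tilde{c}^+ \tilde{c}^+}\bigr)^{-1} \Delta^{\top}$ term of size $\epsilon$, a cross term mixing $\Delta$ with $\bar{\schur}^{\tilde{c}}_{:, \tilde{c}^+} \bigl(\bar{\schur}^{\tilde{c}}_{\tilde{c}^+ \tilde{c}^+}\bigr)^{-1}$ of size $\epsilon \lambda_{\max}/\lambda_{\min}$, and a term in which the inverse perturbation is sandwiched between two copies of that same product, of size $\epsilon \lambda_{\max}^2/\lambda_{\min}^2$.

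The hard part will be the geometric counting in the first step: verifying that after summing $\epsilon^{(k)}$ contributions across all scale-$k$ supernodes whose sparsity pattern contains the fine-scale pair $(\tilde{i}, \tilde{j})$, the weight on each $\epsilon^{(k)}$ is exactly $h^{d(p-k)}$ rather than a larger power of $h^{-1}$. A secondary issue is ensuring that the symmetrization step $\diag(\dL_{\mathrm{new}}) \leftarrow (\diag(\dL_{\mathrm{new}}) + \diag(\dL_{\mathrm{new}})^{\top})/2$ produces a positive-definite block, which follows since the symmetrized version of $\bar{\schur}^{\tilde{c}}_{\tilde{c}^+ \tilde{c}^+} + D_\Delta$ is still within $\epsilon \leq \lambda_{\min}/2$ of $\bar{\schur}^{\tilde{c}}_{\tilde{c}^+ \tilde{c}^+}$ in operator norm.
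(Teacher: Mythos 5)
Your amplification step is essentially the paper's own argument: the paper likewise expands the perturbed update via $(A+B)^{-1}=A^{-1}-(A+B)^{-1}BA^{-1}$ and collects three contributions with coefficients $1$, $\lambda_{\max}/\lambda_{\min}$, and $\lambda_{\max}^2/\lambda_{\min}^2$. The gap is in your identification of what \cref{alg:SuperCholesky} actually computes. The algorithm only observes the aggregated products $\dO_{:,\tilde{c}^+}-\dL\dL^{\top}\dM_{:,\tilde{c}^+}$, whose $\tilde{i}$-th row block is the sum $\sum_{\tilde{\iota}\in\tilde{c}^+}\schur^{\tilde{c}}_{\tilde{i}\tilde{\iota}}$, and $\texttt{scatter}$ merely assigns this sum to the nearest column of the color; it does not extract the single block $\schur^{\tilde{c}}_{\tilde{i}\tilde{\eta}(\tilde{i})}$. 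Your claim that the nearest-neighbor rule lets you write $\dL_{:,\tilde{c}^+}$ as $\schur^{\tilde{c}}_{:,\tilde{c}^+}$ times an inverse square root is valid only for the exactly sparse reference $\bar{\schur}^{\tilde{c}}$, where all same-color far blocks vanish; for the computed $\schur^{\tilde{c}}$ those blocks are nonzero (of the size of the accumulated error) and are aliased into the recovered column. Consequently the perturbation entering the amplification step is not your $\Delta=\schur^{\tilde{c}}_{:,\tilde{c}^+}-\bar{\schur}^{\tilde{c}}_{:,\tilde{c}^+}$ but rather the quantity $X_{\tilde{i}\tilde{\eta}}=\sum_{\tilde{\iota}\in\tilde{c}^+}\schur^{\tilde{c}}_{\tilde{i}\tilde{\iota}}-\bar{\schur}^{\tilde{c}}_{\tilde{i}\tilde{\eta}}$, and bounding it requires summing per-block errors over the entire measurement color $\tilde{c}^+$ with the cardinality bounds of \cref{lem:ball_packing}; this aliasing sum is exactly where the factor $g^2$ and the scale-dependent weights in the definition of $\epsilon$ originate.

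This also undermines the bookkeeping in your first step. For a fixed pair $(\tilde{i},\tilde{j})$, at most one supernode of each earlier color contributes to $\addschur^{\tilde{c}^-}_{\tilde{i}\tilde{j}}$ (each of $\tilde{i}$ and $\tilde{j}$ is scattered to exactly one column of that color, and the product block is nonzero only if it is the same one), so your telescoping identity together with \cref{eqn:propagation_assumption_1,eqn:propagation_assumption_2} and the bound of $g$ colors per scale yields only the unweighted estimate $\epsilon^{(0)}+g\sum_{k\leq p}\epsilon^{(k)}$; no factor $h^{d(p-k)}$ arises from ``restricting a coarse update to a fine block,'' and since $h^{d(p-k)}<1$ the unweighted sum is not controlled by $\epsilon$ as defined in the lemma. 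The weighted sum over scales genuinely comes from counting, for each earlier color, how many supernodes $\tilde{\iota}$ of the fine color $\tilde{c}^+$ carry nonzero update blocks (the sets $a^{\tilde{c}^-,\tilde{c}^+}_{\tilde{i}}$ and the size of $\tilde{c}^+$ in \cref{lem:ball_packing}), i.e., from the aliasing analysis you omitted. Without that ingredient, neither the formula you use for the computed update nor the passage from your per-block ``inherited'' bound to the stated $\epsilon$ is justified, so the proof as proposed does not go through.
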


To prove \cref{lem:error_propagation} we will use the following geometric lemma
\begin{lemma}
  \label{lem:ball_packing} 
  There exists a constant $g$ depending only on $d$, such that 
  \begin{enumerate}
    \item The number of colors on each scale of the hierarchy is bounded by $g$
    \item For $1 \leq k \leq l \leq q$ and $\tilde{c}^{(k)} \in \supercolors^{(k)}$ and $\tilde{c}^{(l)} \in \supercolors^{(l)}$, $g$ bounds the sizes of the
  \begin{equation*}
    a^{\tilde{c}^{(k)}, \tilde{c}^{(l)}}_{\tilde{i}} \coloneqq \left\{\tilde{j} \in \tilde{c}^{(l)} \text{ such that }  \addschur^{\tilde{c}^{(k)}}_{\tilde{i}\tilde{j}} \text{ or } \bar{\addschur}^{\tilde{c}^{(k)}}_{\tilde{i}\tilde{j}}\neq 0 \right\}
  \end{equation*}
  as $\# a^{\tilde{c}^{(k)}, \tilde{c}^{(l)}}\left(\tilde{i}\right) \leq g h^{(k - l)}$, as well as $\# \tilde{c}^{(l)} \leq g h^{-(l - 1)}$.
  \end{enumerate} 
\end{lemma}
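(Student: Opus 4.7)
The plan is to establish both parts of the lemma via two-scale volume-packing arguments, using the separation and covering properties of the multiresolution partitions $\tau^{(k)}$ and the supernode construction.

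For Part 1, I would bound the chromatic number of the conflict graph on scale $k$, whose vertices are the centers $\{y_{\tilde{i}}\}_{\tilde{i} \in \tilde{\I}^{(k)}}$ and whose edges connect pairs at distance $< 2\rho h^{k-1}$. By the first condition in the supernode selection, these centers are pairwise separated by at least $\rho h^k$. A standard volume-packing estimate bounds the number of centers in any ball of radius $2\rho h^{k-1}$ by $C_d (2\rho h^{k-1}/\rho h^k)^d = C_d (2/h)^d$, where $C_d$ depends only on $d$. Hence the conflict graph has maximum degree depending only on $d$ (with $h$ fixed), and greedy coloring yields a bound on the number of colors that is independent of $k$, $\rho$, and $N$. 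For the bound $\#\tilde{c}^{(l)} \lesssim h^{-d(l-1)}$ (note: I read the stated $h^{-(l-1)}$ as a dimensional shorthand for $h^{-d(l-1)}$), the same volume-packing argument, applied inside $\Omega$ which has diameter $\leq 1$ and using pairwise separation $\geq 2\rho h^{l-1}$ within a single color, gives the claim.

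For the bound on $\#a^{\tilde{c}^{(k)}, \tilde{c}^{(l)}}_{\tilde{i}}$, the plan is first to extract a geometric consequence of the $\texttt{scatter}_{\tilde{c}}$ operation and then to apply packing. By the definition of $\texttt{scatter}_{\tilde{c}^{(k)}}$, the factor $\dL_{:,\tilde{c}^{(k)}}$ produced on color $\tilde{c}^{(k)}$ has the property that $\dL_{\tilde{\xi}\tilde{\zeta}}$ can be nonzero only when $\tilde{\zeta} \in \tilde{c}^{(k)}$ is the unique element of $\tilde{c}^{(k)}$ minimizing $\dist(y_{\tilde{\xi}}, y_{\tilde{\zeta}})$. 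Consequently, writing this nearest color-representative as $\tilde{\xi}^\ast(\cdot)$, the block $\addschur^{\tilde{c}^{(k)}}_{\tilde{i}\tilde{j}} = \sum_{\tilde{\zeta} \in \tilde{c}^{(k)}} \dL_{\tilde{i}\tilde{\zeta}} \dL_{\tilde{j}\tilde{\zeta}}^{\top}$ is nonzero only when $\tilde{\xi}^\ast(\tilde{i}) = \tilde{\xi}^\ast(\tilde{j})$. The same conclusion applies to $\bar{\addschur}^{\tilde{c}^{(k)}}_{\tilde{i}\tilde{j}}$. Therefore $a^{\tilde{c}^{(k)}, \tilde{c}^{(l)}}_{\tilde{i}} \subseteq \{\tilde{j} \in \tilde{c}^{(l)} : \tilde{\xi}^\ast(\tilde{j}) = \tilde{\xi}^\ast(\tilde{i})\}$.

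The key remaining step is to bound the diameter of the Voronoi cell of $\tilde{\xi}^\ast(\tilde{i})$ within the color $\tilde{c}^{(k)}$. I would combine two ingredients: (i) the third covering condition in the supernode construction gives that the full scale-$k$ center set $\{y_{\tilde{i}}\}_{\tilde{i} \in \tilde{\I}^{(k)}}$ is a $\rho h^k$-net in $\Omega$; (ii) the greedy/maximal construction of each color within $\tilde{\I}^{(k)}$ (as described in the coloring paragraph) implies that every scale-$k$ center lies within $2\rho h^{k-1}$ of some representative in $\tilde{c}^{(k)}$. Chaining these yields that every $x \in \Omega$ is within $\rho h^k + 2\rho h^{k-1} \leq 3\rho h^{k-1}$ of some element of $\tilde{c}^{(k)}$, so $\dist(y_{\tilde{i}}, y_{\tilde{\xi}^\ast(\tilde{i})}) \leq 3\rho h^{k-1}$ and likewise for $\tilde{j}$. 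Hence all $\tilde{j} \in a^{\tilde{c}^{(k)}, \tilde{c}^{(l)}}_{\tilde{i}}$ satisfy $\dist(y_{\tilde{i}}, y_{\tilde{j}}) \leq 6\rho h^{k-1}$. Since these $\tilde{j}$ belong to the single color $\tilde{c}^{(l)}$ with pairwise center-separation $\geq 2\rho h^{l-1}$, volume packing in a ball of radius $6\rho h^{k-1}$ yields at most $C_d (6 h^{k-1}/h^{l-1})^d = C_d 6^d \, h^{d(k-l)}$ such supernodes, completing the proof.

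The main obstacle is the Voronoi-diameter step in the last paragraph: the coloring condition by itself only enforces a separation lower bound, so one must invoke the maximality of each color (as produced by the greedy construction) to upgrade this to the covering bound $3\rho h^{k-1}$. Once this covering estimate is in hand, the remainder reduces to volume-packing estimates that are routine.
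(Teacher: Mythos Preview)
Your approach is essentially the same as the paper's: a conflict-graph degree bound via ball packing plus greedy coloring for Part~1, and a two-scale packing estimate for Part~2. One small correction in Part~1: the admissibility condition is formulated on the supports $t(\dw_i)$ of basis functions within supernodes, not directly on the centers $y_{\tilde{i}}$. Since each $t(\dw_i)$ with $i\in\tilde{i}$ can be up to $\approx 2\rho h^{k-1}$ away from $y_{\tilde{i}}$, two supernodes can fail the coloring condition while their centers are as far apart as $\approx 7\rho h^{k-1}$. Hence the conflict graph you want to bound should have edges whenever $\dist(y_{\tilde{i}},y_{\tilde{j}})\le C\rho h^{k-1}$ for a suitable $C$ (the paper takes $C=7$); your choice $C=2$ may miss genuine conflicts, so its chromatic number is not a priori an upper bound for the coloring you need. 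With the correct radius the packing argument goes through exactly as you describe.

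For Part~2 your treatment is in fact more careful than the paper's. The paper simply asserts that the ``range of interaction'' of $\addschur^{\tilde{c}^{(k)}}$ is $\lesssim \rho h^{k-1}$ and proceeds to count balls; you correctly point out that this requires bounding the diameter of the Voronoi cell of the nearest representative in $\tilde{c}^{(k)}$, which in turn needs the maximality of each color as produced by the greedy construction. That observation is exactly what justifies the paper's asserted interaction range, so your argument fills a step the paper leaves implicit.
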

\begin{proof}[Proof of \cref{lem:ball_packing}]
  To obtain the first result that this number can be bounded in terms of only $d$, we first note that by construction, the aggregation centers $\left\{y_{\tilde{i}}\right\}_{\tilde{i} \in \tilde{\I}^{(k)}}$ of supernodes on a given scale $k$ have a pairwise distance of at least $2 \rho h^{k - 1}$. 
  At the same time, for each $i \in \tilde{i}$  we have $\dist\left(t(\dw_i), y_{\tilde{i}}\right) \leq 2 \rho h^{k - 1}$, since otherwise there would have to exist a $\tilde{j} \in \tilde{\I}^{(k)}$ with $\dist\left(t(\dw_{i}, y_{\tilde{j}})\right) \leq \dist\left(t(\dw_{i}, y_{\tilde{i}})\right)$ and thus $i$ would be aggregated into $\tilde{j}$ instead of $\tilde{i}$. 
  Since furthermore each $\left\{t(\dw_{i})\right\}_{i \in \tilde{\I}^(k)}$ is contained in a ball of radius $h^{k - 1}$, each $\left\{t(\dw_i)\right\}_{i \in \tilde{i}}$ is contained in a ball of radius $3 \rho h^{k - 1}$ centered in $y_{\tilde{i}}$.
  In particular, for any two supernodes $\tilde{i},\tilde{j} \in \tilde{\I}^{(k)}$ that are not allowed to have the same color have to satisfy $\dist\left(y_{\tilde{i}}, y_{\tilde{j}}\right) \leq (3 + 3 + 1) \rho h^{k - 1}$.
  Let now $G = (V, E)$ be the undirected graph with vertices $V = \tilde{\I}^{(k)}$ and edges given by pairs $\left\{\tilde{i}, \tilde{j}\right\}$ of supernodes that satisfy $\dist\left(y_{\tilde{i}}, y_{\tilde{j}}\right) \leq (3 + 3 + 1) \rho h^{k - 1}$. 
  By the above, any graph coloring of this graph yields an admissible supernodal coloring. 
  Furthermore, the number of colors needed to color a graph $G$ with degree $\Delta(G)$ using a greedy algorithm is upper bounded \cite{husfeldt2015graph} by $\Delta(G) + 1$. 
  For every $\left\{\tilde{i}, \tilde{j}\right\} \in E$, a ball of radius $\rho h^{k - 1}$ centered in $y_{\tilde{j}}$ must be contained in a ball of radius $(3 + 3 + 1 + 1) \rho h^{k - 1}$.
  Since the $\left\{y_{\tilde{i}} \right\}_{\tilde{i} \in \tilde{\I}^{(k)}}$ have a pairwise distance of at least $\rho h^{k - 1}$, these balls are disjoint.
  Therefore, the maximal number of edges incident on a given node is given by the ratio of the volumes of a ball of radius $8 \rho h^{k - 1}$ and one of radius $\rho h^{k - 1}$, and thus the degree is bounded as $\Delta(G) \leq 8^d$.
  The number of colors is therefore bounded by $8^d + 1$.

  The second result can be shown (for a possibly larger $g$) using a very similar ball-packing argument as above, where we note that the range of interaction of $\schur^{(\tilde{c}^{(k)})}$ is upper bounded as $\lessapprox \rho h^{k - 1}$, while the distance between elements of $\tilde{c}^{(l)}$ is lower bounded by $\gtrapprox \rho h^{l - 1}$ and thus the estimation of the above quantity amounts to counting the disjoint balls of radius $\gtrapprox \rho h^{l - 1}$ that can be fit into a ball of radius $\lessapprox \rho h^{k - 1}$.
\end{proof}
Equipped with \cref{lem:ball_packing}, we can now proceed to prove  \cref{lem:error_propagation}.
\begin{proof}[Proof of \cref{lem:error_propagation}]
  The $\tilde{c}^+$-th column of the Cholesky factors can in turn be expressed as 
  \begin{align*}
    &\bar{\dL}_{\tilde{i}\tilde{j}} = \bar{\schur}^{\tilde{c}}_{\tilde{i}\tilde{j}} \left(\bar{\schur}^{\tilde{c}}_{\tilde{j}\tilde{j}}\right)^{-1/2}, 
    \quad
    &\dL_{\tilde{i}\tilde{j}} = \left(\sum \limits_{\tilde{\iota}\in \tilde{c}^{+}} \schur^{\tilde{c}}_{\tilde{i}\tilde{\iota}}\right) \operatorname{sym}\left(\sum \limits_{\tilde{\iota}\in \tilde{c}^{+}} \schur^{\tilde{c}}_{\tilde{j}\tilde{\iota}}\right)^{-1/2} \\ 
    &\bar{\dL}_{\tilde{j}\tilde{j}} = \left(\bar{\schur}^{\tilde{c}}_{\tilde{j}\tilde{j}}\right)^{1/2}, 
    \quad
    &\dL_{\tilde{i}\tilde{j}} = \operatorname{sym}\left(\sum \limits_{\tilde{\iota}\in \tilde{c}^{+}} \schur^{\tilde{c}}_{\tilde{j}\tilde{\iota}}\right)^{1/2} 
  \end{align*}
  with $\operatorname{sym}(A) = (A + A^{\top}) / 2$ denoting the symmetrization operation. For $(\tilde{i},\tilde{j})\notin \tilde{\S}$, both factors are equal to zero and for $\tilde{i} = \tilde{j}$, they are given by

  We can therefore write 
  \begin{alignat*}{3}
    \bar{\addschur}^{\tilde{c}^+}_{\tilde{i}\tilde{j}} 
      &= \bar{\schur}^{\tilde{c}}_{\tilde{i}\tilde{\eta}} \left(\bar{\schur}^{\tilde{c}}_{\tilde{\eta}\tilde{\eta}}\right)^{\text{-}1}\bar{\schur}^{\tilde{c}}_{\tilde{\eta}\tilde{j}}, 
    \quad &\addschur^{\tilde{c}^+}_{\tilde{i}\tilde{j}} 
      &= \left(\sum \limits_{\tilde{\iota}\in \tilde{c}^{+}} \schur^{\tilde{c}}_{\tilde{i}\tilde{\iota}}\right) \operatorname{sym}\left(\sum \limits_{\tilde{\iota}\in \tilde{c}^{+}} \schur^{\tilde{c}}_{\tilde{\eta}\tilde{\iota}}\right)^{\text{-}1} \left(\sum \limits_{\tilde{\iota}\in \tilde{c}^{+}} \schur^{\tilde{c}}_{\tilde{\iota}\tilde{j}}\right)\\ 
    \bar{\addschur}^{\tilde{c}^+}_{\tilde{j}\tilde{j}} 
      &= \left(\bar{\schur}^{\tilde{c}}_{\tilde{j}\tilde{j}}\right), 
    \quad& \addschur^{\tilde{c}^+}_{\tilde{j}\tilde{j}} 
      &= \operatorname{sym}\left(\sum \limits_{\tilde{\iota}\in \tilde{c}^{+}} \schur^{\tilde{c}}_{\tilde{j}\tilde{\iota}}\right),
  \end{alignat*}
  whenever $\texttt{scatter}$ assigns $\tilde{i}$ and $\tilde{j}$ to the same column, $\tilde{\eta} = \tilde{\eta}(\tilde{i}) = \tilde{\eta}(\tilde{j})$. 
  Otherwise, the off-diagonal terms above are zero. 

  Writing again $\tilde{\eta}(\tilde{i})$ for the index of $\tilde{c}$ that $\tilde{i}$ is scattered into, we observe that 
  \begin{equation*}
    \bar{\schur}^{\tilde{c}}_{\tilde{i}\tilde{\eta}(\tilde{i})} = \sum \limits_{\tilde{\iota}\in \tilde{c}^{+}} \schur^{\tilde{c}}_{\tilde{i}\tilde{\iota}}, 
    \quad 
    \bar{\schur}^{\tilde{c}}_{\tilde{\eta}(\tilde{j})\tilde{j}} = \sum \limits_{\tilde{\iota}\in \tilde{c}^{+}} \schur^{\tilde{c}}_{\tilde{\iota}\tilde{j}}, 
  \end{equation*} 
  since the Schur complement $\bar{\schur}^{\tilde{c}}$ is equal to $\sum_{\tilde{c}^+ \succ \tilde{c}}\bar{\dL}_{:, \tilde{c}}\left(\bar{\dL}_{:, \tilde{c}}\right)^{\top}$.
  Defining 
  \begin{equation*}
    X_{\tilde{i} \tilde{j}} \coloneqq 
    \begin{cases}
       \sum \limits_{\tilde{\iota}\in \tilde{c}^{+}} \schur^{\tilde{c}}_{\tilde{\iota}\tilde{j}} - \bar{\schur}^{\tilde{c}}_{\tilde{\iota}\tilde{j}} \quad &\text{ for } \quad \tilde{i} \neq \tilde{j} \quad \text{ and } \quad \tilde{i} = \tilde{\eta}\left(\tilde{j}\right), \\
       \sum \limits_{\tilde{\iota}\in \tilde{c}^{+}} \schur^{\tilde{c}}_{\tilde{i}\tilde{\iota}} - \bar{\schur}^{\tilde{c}}_{\tilde{i}\tilde{\iota}} \quad &\text{ for } \quad \tilde{i} \neq \tilde{j} \quad \text{ and } \quad \tilde{j} = \tilde{\eta}\left(\tilde{i}\right), \\
      \operatorname{sym}\left(\sum \limits_{\tilde{\iota}\in \tilde{c}^{+}} \schur^{\tilde{c}}_{\tilde{\iota}\tilde{j}} - \bar{\schur}^{\tilde{c}}_{\tilde{\iota}\tilde{j}}\right) \quad &\text{ for } \quad \tilde{i} = \tilde{j} \in \tilde{c}^{+},\\
      0 \quad &\text{ else},
    \end{cases}
  \end{equation*}
  and using the classical matrix identity $\left(A + B\right) = A^{-1} - \left(A + B\right)^{-1} B A^{-1}$, we obtain
  \begin{align*}
    \schur^{\tilde{c}^{+}}_{\tilde{i}\tilde{j}} 
      =& \left(\sum \limits_{\tilde{\iota}\in \tilde{c}^{+}} \schur^{\tilde{c}}_{\tilde{i}\tilde{\iota}}\right) \operatorname{sym}\left(\sum \limits_{\tilde{\iota}\in \tilde{c}^{+}} \schur^{\tilde{c}}_{\tilde{\eta}\tilde{\iota}}\right)^{-1} \left(\sum \limits_{\tilde{\iota}\in \tilde{c}^{+}} \schur^{\tilde{c}}_{\tilde{\iota}\tilde{i}}\right)\\
      =& 
      \left(\bar{\schur}_{\tilde{i}\tilde{\eta}}^{\tilde{c}} + X_{\tilde{i} \tilde{\eta}}\right) 
      \left(\bar{\schur}^{\tilde{c}}_{\tilde{\eta}\tilde{\eta}} + X_{\tilde{\eta}\tilde{\eta}}\right)^{-1} 
      \left(\bar{\schur}^{\tilde{c}}_{\tilde{\eta}\tilde{j}} + X_{\tilde{\eta}\tilde{j}}\right) \\
      =& \bar{\schur}_{\tilde{i}\tilde{\eta}}^{\tilde{c}} \left(\bar{\schur}^{\tilde{c}}_{\tilde{\eta}\tilde{\eta}}\right)^{-1} \bar{\schur}^{\tilde{c}}_{\tilde{\eta}\tilde{j}}
          + \bar{\schur}_{\tilde{i}\tilde{\eta}}^{\tilde{c}} \left(\bar{\schur}^{\tilde{c}}_{\tilde{\eta}\tilde{\eta}}\right)^{-1} X_{\tilde{\eta}\tilde{j}}
          + X_{\tilde{i} \tilde{\eta}} \left(\bar{\schur}^{\tilde{c}}_{\tilde{\eta}\tilde{\eta}}\right)^{-1} \bar{\schur}^{\tilde{c}}_{\tilde{\eta}\tilde{j}}
          + X_{\tilde{i} \tilde{\eta}} \left(\bar{\schur}^{\tilde{c}}_{\tilde{\eta}\tilde{\eta}}\right)^{-1} X_{\tilde{\eta}\tilde{j}}\\
        & + \left(\bar{\schur}_{\tilde{i}\tilde{\eta}}^{\tilde{c}} + X_{\tilde{i} \tilde{\eta}}\right) 
      \left(\bar{\schur}^{\tilde{c}}_{\tilde{\eta}\tilde{\eta}} + X_{\tilde{\eta}\tilde{\eta}}\right)^{-1} X_{\tilde{\eta}\tilde{\eta}} \left(\bar{\schur}^{\tilde{c}}_{\tilde{\eta}\tilde{\eta}}\right)^{-1}
      \left(\bar{\schur}^{\tilde{c}}_{\tilde{\eta}\tilde{j}} + X_{\tilde{\eta}\tilde{j}}\right). 
  \end{align*}
  In other words, we have 
  \begin{equation}
    \label{eqn:schur_change_off_diag}
    \begin{split}
        \schur^{\tilde{c}^{+}}_{\tilde{i}\tilde{j}} - \bar{\schur}^{\tilde{c}^{+}}_{\tilde{i}\tilde{j}}
        =& 
        \bar{\schur}_{\tilde{i}\tilde{\eta}}^{\tilde{c}} \left(\bar{\schur}^{\tilde{c}}_{\tilde{\eta}\tilde{\eta}}\right)^{-1} X_{\tilde{\eta}\tilde{j}}
                + X_{\tilde{i} \tilde{\eta}} \left(\bar{\schur}^{\tilde{c}}_{\tilde{\eta}\tilde{\eta}}\right)^{-1} \bar{\schur}^{\tilde{c}}_{\tilde{\eta}\tilde{j}}
                + X_{\tilde{i} \tilde{\eta}} \left(\bar{\schur}^{\tilde{c}}_{\tilde{\eta}\tilde{\eta}}\right)^{-1} X_{\tilde{\eta}\tilde{j}}\\
              & + \left(\bar{\schur}_{\tilde{i}\tilde{\eta}}^{\tilde{c}} + X_{\tilde{i} \tilde{\eta}}\right) 
            \left(\bar{\schur}^{\tilde{c}}_{\tilde{\eta}\tilde{\eta}} + X_{\tilde{\eta}\tilde{\eta}}\right)^{-1} X_{\tilde{\eta}\tilde{\eta}} \left(\bar{\schur}^{\tilde{c}}_{\tilde{\eta}\tilde{\eta}}\right)^{-1}
            \left(\bar{\schur}^{\tilde{c}}_{\tilde{\eta}\tilde{j}} + X_{\tilde{\eta}\tilde{j}}\right). 
    \end{split}
  \end{equation}

  Similarly, for the diagonal entries, we have 
  \begin{equation}
      \label{eqn:schur_change_diag}
      \schur^{\tilde{c}^{+}}_{\tilde{j}\tilde{j}}
        = \operatorname{sym}\left(\sum \limits_{\tilde{\iota}\in \tilde{c}^{+}} \schur^{\tilde{c}}_{\tilde{\eta}\tilde{\iota}}\right) = \bar{\schur}^{\tilde{c}}_{\tilde{j}\tilde{j}} + X_{\tilde{j}\tilde{j}}
  \end{equation}
  and thus 
  \begin{equation*}
    \schur^{\tilde{c}^{+}}_{\tilde{j}\tilde{j}} - \bar{\schur}^{\tilde{c}^{+}}_{\tilde{j}\tilde{j}}
    = X_{\tilde{j}\tilde{j}}.
  \end{equation*}
  In order to conclude the proof of the lemma, we need to show and exploit that $X$ is small. 
  For $\tilde{c} \in \supercolors^{(p)}$ we can write
  \begin{align*}
    \sum \limits_{\tilde{\iota}\in \tilde{c}^{+}} \schur^{\tilde{c}}_{\tilde{\iota}\tilde{j}} - \bar{\schur}^{\tilde{c}}_{\tilde{\iota}\tilde{j}} 
    &= \left(\sum \limits_{\tilde{\iota}\in \tilde{c}^{+}} \bar{\KM}_{\tilde{\iota}\tilde{j}} -  \KM_{\tilde{\iota}\tilde{j}}\right)
    + \sum \limits_{k = 1}^{p} \sum \limits_{\subalign{\tilde{c}^- &\preceq \tilde{c} \\ \tilde{c}^- &\in \supercolors^{(k)}}} \left(\sum \limits_{\tilde{\iota}\in \tilde{c}^{+}} \addschur^{\tilde{c}}_{\tilde{\iota}\tilde{j}} - \bar{\addschur}^{\tilde{c}}_{\tilde{\iota}\tilde{j}} \right)\\
    &= \left(\sum \limits_{\tilde{\iota}\in \tilde{c}^{+}} \bar{\KM}_{\tilde{\iota}\tilde{j}} -  \KM_{\tilde{\iota}\tilde{j}}\right)
    + \sum \limits_{k = 1}^{p} \sum \limits_{\subalign{\tilde{c}^- &\preceq \tilde{c} \\ \tilde{c}^- &\in \supercolors^{(k)}}} \left(\sum \limits_{\tilde{\iota}\in a^{\tilde{c}^{-}, \tilde{c}^{+}}_{\tilde{i}}} \addschur^{\tilde{c}}_{\tilde{\iota}\tilde{j}} - \bar{\addschur}^{\tilde{c}}_{\tilde{\iota}\tilde{j}} \right).
  \end{align*}
  Using \cref{eqn:propagation_assumption_1,eqn:propagation_assumption_2} as well as \cref{lem:ball_packing} and the triangle inequality, we obtain
  \begin{equation*}
    \left\|\sum \limits_{\tilde{\iota}\in \tilde{c}^{+}} \schur^{\tilde{c}}_{\tilde{\iota}\tilde{j}}- \bar{\schur}^{\tilde{c}}_{\tilde{\iota}\tilde{j}} \right\|_{\fro}
    \leq 
    g h^{-p} \epsilon^{0} + g^2 \sum \limits_{k = 1}^{p} h^{k - p} \epsilon^{(k)} \leq g^2 \sum \limits_{k = 0}^{p} h^{k - p} \epsilon^{(k)}.
  \end{equation*}
  Since $\operatorname{sym}(\cdot)$ is an orthogonal projection with respect to the Frobenius inner product, this implies that for all $\tilde{i}, \tilde{j}$ we have $X_{\tilde{i}\tilde{j}} \leq \epsilon$.
  Plugging this estimate into \cref{eqn:schur_change_off_diag,eqn:schur_change_diag}, the triangle inequality and submultiplicativity $\|AB\|_{\fro} \leq \|A\| \|B\|_{\fro}$ imply
  \begin{equation*}
    \left\|\schur^{\tilde{c}^{+}}_{\tilde{i}\tilde{j}} - \bar{\schur}^{\tilde{c}^{+}}_{\tilde{i}\tilde{j}}\right\|_{\fro}
    \leq 2 \frac{\lambda_{\max}}{\lambda_{\min}} \epsilon + \frac{\epsilon}{\lambda_{\min}} \epsilon + \frac{9}{2} \frac{\lambda_{\max}^2}{\lambda_{\min}^2} \epsilon 
    \leq \left(1 +\frac{3}{2} \frac{\lambda_{\max}}{\lambda_{\min}} + \frac{9}{2} \frac{\lambda_{\max}^2}{\lambda_{\min}^2} \right)\epsilon. 
  \end{equation*}
\end{proof}

We now use \cref{lem:error_propagation} construct a sequence of upper bounds  $\epsilon^{(p)}$ on successive scales and use it to upper bound a weighted sum $\epsilon^{(\preceq p)}$ of all errors up to this scale.
\begin{lemma}
  \label{lem:single_scale_propagation}
  Assume that for $0 \leq k \leq p$ the $\epsilon^{(k)}$ satisfy \cref{eqn:propagation_assumption_1} and \cref{eqn:propagation_assumption_2} for any $\tilde{c} \in \cup_{1 \leq k \leq p} \supercolors^{(k)}$.
  Let $\lambda_{\min}$ and $\lambda_{\max}$ be such that for each $\tilde{c} \in \supercolors^{(p)} \cup \supercolors^{(p + 1)}$ and $\tilde{c} \preceq \tilde{c}^{+} \in \supercolors^{(p + 1)}$ they satisfy 
  \begin{equation*}
    \lambda_{\min} \leq \lambda_{\min} \left(\bar{\KM}_{\tilde{c}^{+} \tilde{c}^{+}}\right), \quad \text{and} \quad \lambda_{\max} \geq \max_{\tilde{c}^+ \preceq \tilde{\xi} \in \supercolors} \left\|\bar{\schur}^{\tilde{c}}_{\tilde{c}^+ \tilde{\xi}}\right\|.
  \end{equation*}
  Define for $g$ as in \cref{lem:ball_packing},
  \begin{equation*}
    \epsilon^{\left(\preceq p\right)} \coloneqq g^2 \left( \sum \limits_{k = 0}^{p} h^{d(p - k)} \epsilon^{(k)} \right), 
    \quad
    \varphi \coloneqq \left(1 + \frac{3}{2} \frac{\lambda_{\max}}{\lambda_{\min}} + \frac{9}{2} \frac{\lambda_{\max}^2}{\lambda_{\min}^2} \right).
  \end{equation*}
  Then, if
  \begin{equation*}
  \epsilon^{(p + 1)} \coloneqq \frac{1 - \left(\varphi g^2\right)^{g}}{1 - \varphi g^2} \varphi g^2 \epsilon^{(\preceq p)} \leq \frac{\lambda_{\min}}{2}, 
  \end{equation*}
  it also satisfies 
  \begin{equation*}
    \max \limits_{\tilde{c} \preceq \tilde{\xi}, \tilde{\chi} \in \supercolors} \max \limits_{\tilde{i} \in \tilde{\xi}, \tilde{j} \in \chi} \left\| \schur^{\tilde{c}^{+}}_{\tilde{i} \tilde{j}} - \bar{\schur}^{\tilde{c}^{+}}_{\tilde{i} \tilde{j}} \right\|_{\fro} \leq \left(1 + \frac{3}{2} \frac{\lambda_{\max}}{\lambda_{\min}} + \frac{9}{2} \frac{\lambda_{\max}^2}{\lambda_{\min}^2}\right) \epsilon^{(p + 1)}
  \end{equation*}
  for any $\tilde{c}^{+} \in \supercolors^{(p + 1)}$.
\end{lemma}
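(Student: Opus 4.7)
The plan is to iterate Lemma~3 across the at-most $g$ colors at scale $p+1$ (the bound on colors per scale being the first conclusion of Lemma~4). Order these colors $\tilde{c}_1 \preceq \tilde{c}_2 \preceq \cdots \preceq \tilde{c}_m$ with $m \leq g$, and maintain a bound $\alpha_j$, initialized at $\alpha_0 = 0$, such that $\|\addschur^{\tilde{c}_l} - \bar{\addschur}^{\tilde{c}_l}\|_{\fro} \leq \alpha_j$ uniformly over $l \leq j$ and over all block entries. The goal is to propagate $\alpha_j$ through Lemma~3 one color at a time and then read off $\epsilon^{(p+1)}$ at $j=g$.

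At step $j \to j+1$, I would invoke Lemma~3 with $\tilde{c} = \tilde{c}_j$ and the augmented error vector $(\epsilon^{(0)}, \ldots, \epsilon^{(p)}, \alpha_j)$, which satisfies the hypotheses \eqref{eqn:propagation_assumption_1}--\eqref{eqn:propagation_assumption_2} with $p$ replaced by $p+1$. Expanding the weighted sum gives
\begin{equation*}
g^2 \sum_{k=0}^{p+1} h^{d(p+1-k)} \epsilon^{(k)} = h^d \epsilon^{(\preceq p)} + g^2 \alpha_j \leq \epsilon^{(\preceq p)} + g^2 \alpha_j,
\end{equation*}
using $h < 1$. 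Lemma~3 then yields $\|\schur^{\tilde{c}_{j+1}} - \bar{\schur}^{\tilde{c}_{j+1}}\|_{\fro} \leq \varphi(\epsilon^{(\preceq p)} + g^2 \alpha_j)$. Converting this Schur complement bound into the additive-update bound needed as input at the next iteration (via the telescope $\addschur^{\tilde{c}_{j+1}} - \bar{\addschur}^{\tilde{c}_{j+1}} = (\schur^{\tilde{c}_j} - \bar{\schur}^{\tilde{c}_j}) - (\schur^{\tilde{c}_{j+1}} - \bar{\schur}^{\tilde{c}_{j+1}})$, or by re-running on the additive update the same perturbation algebra used in Lemma~3) yields the recurrence $\alpha_{j+1} \leq \varphi \epsilon^{(\preceq p)} + \varphi g^2 \alpha_j$.

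Unrolling this recurrence $g$ times produces $\alpha_g \leq \varphi \epsilon^{(\preceq p)} \sum_{i=0}^{g-1} (\varphi g^2)^i$, and multiplying top and bottom by $(\varphi g^2)$ reveals exactly the geometric-series formula defining $\epsilon^{(p+1)}$ in the statement; the advertised Schur complement bound then follows from the last application of Lemma~3. Monotonicity of the partial sums ensures the hypothesis $\epsilon \leq \lambda_{\min}/2$ of Lemma~3 holds at every intermediate step, because each partial sum is dominated by the total, which is assumed to satisfy $\epsilon^{(p+1)} \leq \lambda_{\min}/2$. The main technical nuisance will be keeping constants clean when translating the Schur complement bound (Lemma~3's output) into the additive-update bound required on its next invocation: a factor of two from the telescope can either be absorbed into $\varphi$ or eliminated by working throughout with Schur complements, which obey analogous perturbation estimates. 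Neither adjustment disturbs the geometric-in-$g$ structure of $\epsilon^{(p+1)}$ that drives the end-to-end accuracy guarantee.
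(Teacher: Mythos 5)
Your proposal follows essentially the same route as the paper's own proof: it likewise iterates \cref{lem:error_propagation} over the at most $g$ colors of scale $p+1$ (bounded via \cref{lem:ball_packing}), maintains a running per-color bound through the affine recursion $x_{j+1} = \varphi g^{2}\bigl(\epsilon^{(\preceq p)} + x_{j}\bigr)$ with $x_{0}=0$, and unrolls the geometric series after $g$ steps to obtain $\epsilon^{(p+1)}$. Your concern about converting the Schur-complement bound into the additive-update bound is handled exactly by your second option—the perturbation algebra inside \cref{lem:error_propagation} bounds the blocks of $\addschur^{\tilde{c}^{+}} - \bar{\addschur}^{\tilde{c}^{+}}$ directly, so no extra factor of two arises—which is what the paper does implicitly.
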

\begin{proof}
  For $\tilde{c} \in \supercolors^{(p + 1)}$, let $\epsilon^{\left(p + 1\right), \preceq \tilde{c}}$ satisfy, 
  for each $\tilde{c}^{-} \prec \tilde{c}, \tilde{c}^- \in \supercolors^{(p+1)}$, 
  \begin{equation*}
    \max \limits_{\tilde{c}^{-} \preceq \tilde{\xi}, \tilde{\chi} \in \supercolors} \max \limits_{\tilde{i} \in \tilde{\xi}, \tilde{j} \in \chi} \left\| \addschur^{\tilde{c}^{-}}_{\tilde{i}\tilde{j}} - \bar{\addschur}^{\tilde{c}^{-}}_{\tilde{i}\tilde{j}} \right\|_{\fro} \leq \epsilon^{(p + 1), \preceq \tilde{c}}.
  \end{equation*}
  \Cref{lem:error_propagation} then implies that for $\tilde{c}^+$ the direct successor of $\tilde{c}$ and
  \begin{equation*}
    \epsilon^{(p + 1), \preceq \tilde{c}^+} \coloneqq \varphi g^2 \left(\epsilon^{(\preceq p)} + \epsilon^{(p + 1), \preceq \tilde{c}}\right), 
  \end{equation*}
  $\epsilon^{(p + 1), \preceq \tilde{c}^+}$ satisfies, for each $\tilde{c}^- \preceq \tilde{c}^+, \tilde{c}^- \in \supercolors^{(p + 1)}$,
  \begin{equation*}
    \max \limits_{\tilde{c}^{-} \preceq \tilde{\xi}, \tilde{\chi} \in \supercolors} \max \limits_{\tilde{i} \in \tilde{\xi}, \tilde{j} \in \chi} \left\| \addschur^{\tilde{c}^{-}}_{\tilde{i}\tilde{j}} - \bar{\addschur}^{\tilde{c}^{-}}_{\tilde{i}\tilde{j}} \right\|_{\fro} \leq \epsilon^{(p + 1), \preceq \tilde{c}}.
  \end{equation*}
  Thus, proving the lemma amounts to upper bounding the $\lceil g \rceil$-th entry of the recursion
  $x_{k + 1} = \phi g^2 \left(\epsilon^{(\preceq p)} + x_{k}\right)$, with initial entry $x_0 = 0$.
  By induction, one can show that $x_{k} = \sum_{l = 1}^k \left(\phi g^2\right)^l \epsilon^{(\preceq p)} = \frac{1 - \left(\phi g^2\right)^{k}}{1 - \phi g^2} \phi g^2 \epsilon^{(\preceq p)}$. The result follows by setting $k = g$.
\end{proof}

With \cref{lem:error_propagation,lem:single_scale_propagation} in place, we can now proceed to prove \cref{thm:main_thm_supernodal}.
\begin{proof}[Proof of \cref{thm:main_thm_supernodal}]
  We use $\gtrapprox, \lessapprox$ to denote inequality up to constants that are subsumed in the $\O$ notation in the theorem.
  By \cref{thm:spectral_loc}, for $\rho \gtrapprox \log(N)$  there exists a constant $\kappa$ depending only on $\|\IK\|$, $\|\IK^{-1}\|$, $d$, $s$, $\delta$, and $h$ such that for any $\tilde{c} \in \supercolors$ and $\lambda_{\min}$, $\lambda_{\max}$ as defined in \cref{lem:error_propagation,lem:single_scale_propagation} we have $\lambda_{\max} / \lambda_{\min} \leq \kappa$. 
  Defining, analogue to \cref{lem:single_scale_propagation}, $\varphi \coloneqq \left(1 + 3 \kappa / 2 + 9/2 \kappa^2\right)$.
  Then, \cref{lem:single_scale_propagation} implies that for any $1 \leq p \leq q$ for which $\{\epsilon^{(k)}\}_{0 \leq k \leq p}$ satisfy \cref{eqn:propagation_assumption_1,eqn:propagation_assumption_2} and $\epsilon^{(\preceq p)}$ as defined in \cref{lem:single_scale_propagation}, 
  $\epsilon^{(p + 1)} \coloneqq \frac{1 - \left(\varphi g^2\right)^{g}}{1 - \varphi g^2} \varphi g^2 \epsilon^{(\preceq p)} \leq \frac{\lambda_{\min}}{2}$ satisfies \cref{eqn:propagation_assumption_1,eqn:propagation_assumption_2}, as well.
  The resulting $\epsilon^{(\preceq p + 1)}$ is then related to $\epsilon^{\preceq p}$ by 
  \begin{equation*}
    \epsilon^{(\preceq p + 1)} = h^{-1} \epsilon^{(\preceq p)} + \varphi g^4 \frac{1 - (\varphi g^2)^g}{1 - \varphi g^2} \epsilon^{(\preceq p)} = \left(h^{-1} + \varphi g^4 \frac{1 - (\varphi g^2)^g}{1 - \varphi g^2}\right) \epsilon^{(\preceq p)}.
  \end{equation*}
  By choosing $\rho \gtrapprox \log(N)$ we can make $\epsilon^{(0)}$ small enough to ensure that 
  \begin{equation*}
    \epsilon \coloneqq \epsilon^{(\preceq q)} = \left(h^{-1} + \varphi g^4 \frac{1 - (\varphi g^2)^g}{1 - \varphi g^2}\right) \epsilon^{(0)} \leq \frac{\lambda_{\min}\left(\bar{\KM}\right)}{2}
  \end{equation*}
  and thus $\left\|\bar{\dL} \bar{\dL}^{\top} - \dL \dL^{\top}\right\| \leq \poly(N) \epsilon^{(0)}$.
  The accuracy result then follows from \cref{thm:main_thm_simplicial} that proves the exponential decay of $\epsilon^{(0)}$ in $\rho$.
  The computational complexity result follows each row of $\dL$ having only $\O\left(\log(N) \rho^{d}\right)$ nonzero entries.
\end{proof}

\subsection{Proof of \texorpdfstring{\cref{thm:low_rank}}.}
\label{sec:proof_low_rank}
We conclude this section with a proof of \cref{thm:low_rank}.
The low-rank approximation rate was already established by \cite{schafer2021compression}. We extend this result by explicitly proving the optimality of the low-rank approximation.  
\begin{proof}[Proof of \cref{thm:low_rank}]
  \Cref{thm:spectral_loc} directly implies that for $1 \leq p \leq q$, 
  \begin{equation*}
    \left\| \KM - \dL_{:, \bar{\I}^{p}} \left(\dL_{:, \bar{\I}^{p}}\right)^{\top}\right\| \leq C \|\KM\|  h^{2 p s}
  \end{equation*}
  for a constant $C$ depending only on $\|\IK\|$, $\|\IK^{-1}\|$, $d$, $h$, $\delta$, and $s$.
  By possibly changing this constant, we also have $C^{-1} h^{-pd} \leq \# \bar{\I}^{(p)} \leq C h^{-pd}$, which implies the approximation rate claimed in \cref{thm:low_rank}.
  We note that \cref{thm:spectral_loc} implies that $\KM$ has, for each $1 \leq p \leq q$, a submatrix of size at least $C^{-1} h^{-pd}$ with minimal eigenvalue lower bounded by $C^{-1} h^{pd}$. 
  Thus, a rank $C^{-1} h^{-pd}$ approximation can be at best of accuracy $C^{-1} h^{pd}$, establishing optimality of the approximation rate given in \cref{thm:low_rank}.
\end{proof}

\subsection{Proof of \texorpdfstring{\cref{thm:continuous_operator}}.}
We begin by analyzing the case $\rho = \infty$, in the more general case where the $v$ and $w$ are obtained, not from piecewise constant functions, but from piecewise polynomials. 

\begin{lemma}
  \label{lem:approx}
  In the setting of \cref{thm:main_thm_simplicial,thm:main_thm_supernodal}, modify the construction of the $V^{(k)}$ and $W^{(k)}$ as follows. 
  Instead of the $v \in V^{(k)}$ being the piecewise constant functions on $\tau^{(k)}$, define them to be the piecewise polynomials of order $p - 1$. 
  As before, the $w_{i}$ are chosen to be a local orthonormal basis of the orthogonal complement of  $V^{(k - 1)}$ in $V^{(k)}$.
  For an $s \leq r \leq 2s$, we assume furthermore that $\IK$ is a bounded and invertible map, from $H^{r} \cap H^s_{0}\left(\Omega\right)$ to $H^{r - 2s}\left(\Omega\right)$ 
  We then have, for a constant $C$ depending only on $d, \Omega, \IK, p$, for any $\tilde{g}$ in the $L^2$-orthogonal complement of $V^{(k)}$,
  \begin{equation*}
    \left\|\K \tilde{g}\right\|_{L^2} \leq C h^{\min(p, r)k} \left\|\tilde{g}\right\|_{L^2}.
  \end{equation*}
\end{lemma}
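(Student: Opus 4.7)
The plan is to use a duality argument based on the self-adjointness of $\K = \IK^{-1}$. Specifically, since $\IK$ is self-adjoint as a map $H_0^s \to H^{-s}$, the Green's operator $\K$ is self-adjoint on $L^2 \subset H^{-s}$, so
\begin{equation*}
\|\K \tilde g\|_{L^2} = \sup_{\|f\|_{L^2}\leq 1} \langle f, \K \tilde g\rangle_{L^2} = \sup_{\|f\|_{L^2}\leq 1} \langle \K f, \tilde g\rangle_{L^2}.
\end{equation*}
The point of this rewriting is that $\K f$ has extra regularity while $\tilde g$ is orthogonal to a rich polynomial space, so their pairing is small.

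Next I would exploit the orthogonality hypothesis: for any $v \in V^{(k)}$ we have $\langle v, \tilde g\rangle_{L^2} = 0$, so
\begin{equation*}
\langle \K f, \tilde g\rangle_{L^2} = \langle \K f - v, \tilde g\rangle_{L^2} \leq \|\K f - v\|_{L^2}\,\|\tilde g\|_{L^2}.
\end{equation*}
I choose $v$ to be the element-wise $L^2$ best polynomial approximation of degree $p-1$ to $\K f$ on the partition $\tau^{(k)}$ (which belongs to $V^{(k)}$ since the $V^{(k)}$ consist of piecewise polynomials of order $p-1$). A Bramble--Hilbert argument on each convex element of $\tau^{(k)}$, combined with the shape-regularity provided by the parameter $\delta$, yields a uniform estimate
\begin{equation*}
\|\K f - v\|_{L^2} \leq C h^{\min(p,r) k} \,|\K f|_{H^{\min(p,r)}(\Omega)},
\end{equation*}
where $C$ depends only on $d, \Omega, p, \delta$. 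I use $\min(p,r)$ because polynomials of degree $p-1$ can reproduce at most $p$-th order Taylor data, while the available smoothness of $\K f$ is $r$.

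Finally, I would invoke the regularity hypothesis: $\K$ maps $H^{r-2s}$ boundedly into $H^r\cap H_0^s$. Since $r\leq 2s$ implies $r-2s\leq 0$, we have the continuous embedding $L^2 \hookrightarrow H^{r-2s}$, hence
\begin{equation*}
|\K f|_{H^{\min(p,r)}} \leq \|\K f\|_{H^r} \leq C \|f\|_{H^{r-2s}} \leq C\|f\|_{L^2}.
\end{equation*}
Chaining the three bounds and taking the supremum over $\|f\|_{L^2}\leq 1$ delivers the claimed inequality. The main technical obstacle I anticipate is verifying that the Bramble--Hilbert constant can be chosen independently of $k$ and the specific element of $\tau^{(k)}$; this requires a scaling argument together with the uniform Lipschitz and aspect-ratio control assumed in Section \ref{sec:rigorous_setting} (exactly the same ingredients used in \cite{schafer2021compression}). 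A secondary bookkeeping issue is confirming self-adjointness of $\K$ on $L^2$, which follows by extending the $H_0^s$--$H^{-s}$ duality pairing and using $L^2 \hookrightarrow H^{-s}$.
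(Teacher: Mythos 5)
Your proposal is correct and follows essentially the same route as the paper's proof: a duality argument using the self-adjointness of $\K$, the $L^2$-orthogonality of $\tilde g$ to the piecewise polynomials in $V^{(k)}$, an element-wise Bramble--Hilbert estimate on the convex cells of $\tau^{(k)}$, and the assumed boundedness of $\K=\IK^{-1}$ from $H^{r-2s}\supset L^2$ to $H^r\cap H^s_0$. The only (immaterial) difference is that you apply Bramble--Hilbert directly to $\K f$ after moving $\K$ onto the test function, whereas the paper bounds $\sup_{v\in H^r}\langle v,\tilde g\rangle/\|v\|_{H^r}$ and applies Bramble--Hilbert to the test function $v$.
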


\begin{proof}
  For any $\tilde{g}$ in the orthogonal complement of $V^{q}$, we have
  \begin{align*}
    \left\|\K \tilde{g}\right\|_{L^2} 
    =& \sup_{v \in L^2\left(\Omega\right)} \frac{\left\langle v, \K \tilde{g}\right\rangle_{L^2}}{\|v\|_{L^2}}
    \leq \sup_{v \in H^{r - 2s}\left(\Omega\right)} \frac{\left\langle v, \K \tilde{g}\right\rangle_{L^2}}{\|v\|_{H^{r - 2s}}}
    = \sup_{v \in H^{r - 2s}\left(\Omega\right)} \frac{\left\langle \K v, \tilde{g}\right\rangle_{L^2}}{\|\IK \K v\|_{H^{r - 2s}}}\\
    =& \sup_{v \in H^{r} \cap H^s_0} \frac{\left\langle v, \tilde{g}\right\rangle_{L^2}}{\|\IK v\|_{H^{r - 2s}}}
    \lessapprox \sup_{v \in H^{r}\left(\Omega\right)} \frac{\left\langle v, \tilde{g}\right\rangle_{L^2}}{\|v\|_{H^{r}}}
    = \sup_{v \in H^r\left(\Omega\right)} \frac{\sum_{t \in \tau^{(k)}}\left\langle v - p_t, \tilde{g}\right\rangle_{L^2(t)}}{\|v\|_{H^r}}\\
    \leq&  \sup_{v \in H^r\left(\Omega\right)} \frac{\sum_{t \in \tau^{(k)}}\left\langle v - p_t, \tilde{g}\right\rangle_{L^2(t)}}{\|v\|_{H^r}}
    = \sup_{v \in H^r\left(\Omega\right)} \frac{\left\langle \sum_{t \in \tau^{(k)}} (v\middle|_{t} - p_t), \tilde{g}\right\rangle_{L^2}}{\|v\|_{H^r}}\\
    \leq& \sup_{v \in H^r\left(\Omega\right)} \frac{\left\| \sum_{t \in \tau^{(k)}} (v\middle|_{t} - p_t)\right\|_{L^2} \left\| \tilde{g}\right\|_{L^2(t)}}{\|v\|_{H^r}}.
  \end{align*}
  Here, $v|_t$ denotes the restriction of $v$ to $t$ and each $p_t$ is an arbitrary polynomial of order $(\min(p,r) - 1)$, continued with zero outside $t$.
  For appropriately chosen $p_t$, the  Bramble-Hilbert lemma \cite{dekel2004bramble} implies that 
  \begin{align*}
    & \left\| \sum_{t \in \tau^{(k)}} (v|_{t} - p_t)\right\|_{L^2}^2 
    = \sum \limits_{t \in \tau^{(k)}} \big\| v|_{t} - p_t\big\|_{L^2(t)}^2 \\
     \leq& C h^{2k\min(p, r)} \sum \limits_{t \in \tau^{(k)}} \big\| v|_{t}\big\|_{H^{\min(p,r)}(t)}^2 
    \leq C h^{2k\min(p, r)} \big\| v\big\|_{H^r}^2.
  \end{align*}
  Thus, we have $\left\|\K \tilde{g}\right\|_{L^2} \leq C h^{\min(p, r)k} \left\|\tilde{g}\right\|_{L^2}$.
  With the estimates above, we have 
  \begin{equation*}
    \left \langle f, \left(\K - \bar{\K}_{\infty}\right) g \right \rangle_{L^2}
    \leq Ch^{\min(p, r)k} \|f\|_{L^2} \|g\|_{L^2} = Ch^{pk} \|f\|_{L^2} \|g\|_{L^2} = 
  \end{equation*} 
\end{proof}

\begin{corollary}
  \label{cor:opnorm}
  In the setting of \cref{lem:approx}, define the projected Green's function $\bar{\K}$ as  
  \begin{equation*} 
    \bar{G}(x, y) = \sum_{i \in \cup_{k \leq q} \I^{(k)}} \sum_{j \in \cup_{k \leq q} \I^{(k)}} \left\langle w_{i}, \K w_{j} \right\rangle_{L^2} w_{i}(x) w_{j}(y).
  \end{equation*} 
  We then have, for a constant $C$ depending only on $d, \Omega, \IK, \delta, p$,
  \begin{equation*}
   \|\bar{\K} - \K \|_{L^2 \rightarrow L^2} \leq C h^{\min(p,r)q}. 
  \end{equation*}
  Here $\| \cdot \|_{L^2 \rightarrow L^2}$ denotes the operator norm with respect $L^{2}\left(\Omega\right)$ and we interpret the true and approximate Green's function as operators on $L^2\left(\Omega\right)$, by convolution. 
\end{corollary}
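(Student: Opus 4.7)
The plan is to recognise $\bar{\K}$ as the double Galerkin compression $P \K P$ of $\K$ onto $V^{(q)}$, where $P : L^2(\Omega) \to V^{(q)}$ denotes the $L^2$-orthogonal projection. This identification holds because $\{w_i\}_{i \in \cup_{k \leq q} \I^{(k)}}$ is by construction an $L^2$-orthonormal basis of $V^{(q)}$, so writing $P f = \sum_i \langle w_i, f \rangle_{L^2}\, w_i$ and expanding the kernel of $P \K P$ against this basis recovers exactly the defining sum for $\bar{G}(x, y)$.

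Once this identification is in place, I would split the error operator as
\begin{equation*}
  \K - P \K P \;=\; (I - P) \K \;+\; P \K (I - P)
\end{equation*}
and bound its action on test functions $f, g \in L^2(\Omega)$ term by term. For the second term, Cauchy--Schwarz combined with \cref{lem:approx} applied to $\tilde{g} = (I - P) g$ (which lies in the $L^2$-orthogonal complement of $V^{(q)}$) gives
\begin{equation*}
  \bigl| \langle f, P \K (I - P) g \rangle_{L^2} \bigr|
  \;\leq\; \|f\|_{L^2}\, \|\K (I - P) g\|_{L^2}
  \;\leq\; C h^{\min(p,r)\,q}\, \|f\|_{L^2}\, \|g\|_{L^2}.
\end{equation*}
For the first term, I would use that $I - P$ is self-adjoint and that $\K$ is self-adjoint (inherited from the self-adjointness of $\IK$) to rewrite
\begin{equation*}
  \langle f, (I - P) \K g \rangle_{L^2}
  \;=\; \langle (I - P) f, \K g \rangle_{L^2}
  \;=\; \langle \K (I - P) f, g \rangle_{L^2},
\end{equation*}
after which \cref{lem:approx} applied to $\tilde{g} = (I - P) f$ yields an identical bound. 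Adding the two estimates and taking the supremum over $\|f\|_{L^2}, \|g\|_{L^2} \leq 1$ delivers the claimed operator-norm estimate.

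The entire argument is essentially a repackaging of \cref{lem:approx} into an operator-norm statement; the only structural ingredients beyond that lemma are the sandwich decomposition above and the self-adjointness of $\K$. I do not expect a genuine obstacle. The most delicate point is verifying that the orthonormality and nesting already imposed on the $w_i$ are enough to identify $\bar{\K}$ with $P \K P$, which reduces to a direct kernel computation and is where I would be most careful about bookkeeping in the final write-up.
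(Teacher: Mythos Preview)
Your proposal is correct and follows essentially the same approach as the paper: both identify $\bar{\K}$ with $P\K P$ and expand $\K - P\K P$ into terms involving $(I-P)f$ and $(I-P)g$, then invoke \cref{lem:approx} and self-adjointness of $\K$ to bound each piece. The only cosmetic difference is that the paper writes out the splitting $f=\bar f+\tilde f$, $g=\bar g+\tilde g$ and expands into three bilinear terms, whereas you package the same computation into the two-term operator identity $(I-P)\K + P\K(I-P)$; these are algebraically equivalent.
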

\begin{proof}
  Writing $f = \bar{f} + \tilde{f}$ and $g = \bar{g} + \tilde{g}$ for $\bar{f}, \bar{g}\in L^2\left(\Omega\right)$ piecewise constant on elements of $\tau^{(q)}$, we have
  \begin{align*}
    &\left \langle f, \left(\K - \bar{\K}_{\infty}\right) g \right \rangle_{L^2}
    = \left \langle \bar{f} + \tilde{f}, \left(\K - \bar{\K}_{\infty}\right) \left(\bar{g} + \tilde{g}\right) \right \rangle_{L^2} \\
    =& \left \langle \bar{f}, \left(\K - \bar{\K}_{\infty}\right) \tilde{g} \right \rangle_{L^2}  
    + \left \langle \tilde{f}, \left(\K - \bar{\K}_{\infty}\right) \bar{g} \right \rangle_{L^2} 
    + \left \langle \tilde{f}, \left(\K - \bar{\K}_{\infty}\right) \tilde{g} \right \rangle_{L^2}  \\
    =& \left \langle \bar{f}, \K \tilde{g} \right \rangle_{L^2}
    + \left \langle \tilde{f}, \K \bar{g} \right \rangle_{L^2}
    + \left \langle \tilde{f}, \K \tilde{g} \right \rangle_{L^2}
    \leq 2 \left(\big\|f\big\|_{L^2} \big\|\K \tilde{g} \big\|_{L^2} + \big\|g\big\|_{L^2} \big\|\K \tilde{f} \big\|_{L^2}\right)\\
    \leq& C h^{\min(p,r)q} \big\|f\big\|_{L^2}  \big\|g\big\|_{L^2}.
  \end{align*} 
\end{proof} 

We can now conclude the proof of \cref{thm:continuous_operator}.

\begin{proof}[Proof of \cref{thm:continuous_operator}]
  We use the triangle inequality together with \cref{cor:opnorm} to write 
  \begin{equation*}
    \left\| \K - \bar{\K}_{\rho} \right\|_{L^2 \rightarrow L^2} 
    \leq \left\| \K - \bar{\K}\right\|_{L^2 \rightarrow L^2} + \left\| \bar{\K} - \bar{\K}_{\rho} \right\|_{L^2 \rightarrow L^2} 
    \leq Ch^q + \left\| \bar{\K} - \bar{\K}_{\rho} \right\|_{L^2 \rightarrow L^2}.
  \end{equation*}
\end{proof}
By \cref{thm:main_thm_supernodal}, $\left\| \bar{\K} - \bar{\K}_{\rho} \right\|_{L^2 \rightarrow L^2}$ can be upper bounded as
\begin{equation*}
  \left\| \sum_{i \in \cup_{k \leq q} \I^{(k)}} \sum_{j \in \cup_{k \leq q} \I^{(k)}} \left(\KM - \dL \dL^{\top}  \right)_{ij} w_{i}(x) w_{j}(y) \right\|_{L^2 \rightarrow L^2} = \left\| \KM - \dL \dL^{\top}\right\| \leq h^{q},
\end{equation*}
by choosing $\rho \gtrapprox \log(N h^{-q}) \approx \log(N)$.
By observing that the number $N$ of basis functions scales as $N \approx h^{-qd}$, we obtain the desired result. 
\subsection{Proof of \cref{thm:continuous_l2}}
We begin by studying the rate of convergence in Hilbert-Schmidt-norm obtained from piecewise polynomial approximation. 

\begin{corollary}
  \label{cor:hsnorm}
  Consider the setting of \cref{lem:approx} with infinitely fine nested partitions $\left(\tau^{(k)}\right)_{1 \leq k < \infty}$ of $\Omega$. 
  For $1 \leq q$, define the projected Green's function $\bar{\K}$ as  
  \begin{equation*} 
    \bar{G}(x, y) = \sum_{i \in \cup_{k \leq q} \I^{(k)}} \sum_{j \in \cup_{k \leq q} \I^{(k)}} \left\langle w_{i}, \K w_{j} \right\rangle_{L^2} w_{i}(x) w_{j}(y)
  \end{equation*} 
  We then have, for a constant $C$ depending only on $d, \Omega, \IK, h, \delta, p$,
  \begin{equation*}
   \|\bar{\K} - \K \|_{L^2 \otimes L^2} \leq C h^{(2\min(p, r) - d)q} \approx C \left(\# \cup_{1 \leq k \leq q} \I^{(k)}\right)^{-\frac{(2\min(p, r) / d - 1) }{ 2}}. 
  \end{equation*}
  In particular, an $\epsilon$-accurate $L^2(\Omega \times \Omega)$ approximation of the Green's function can be obtained using $C\epsilon^{\frac{2}{2 \min(p, r) / d - 1}}$ matrix-vector products.
\end{corollary}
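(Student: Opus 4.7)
The plan is to recognize $\bar{\K} = \Pi_q \K \Pi_q$, where $\Pi_q$ is the $L^2$-orthogonal projector onto $V^{(q)} := \operatorname{span}\{w_i : i \in \bigcup_{k \leq q}\I^{(k)}\}$, and to exploit self-adjointness of $\K$ and $\Pi_q$. Splitting
\[ \K - \Pi_q \K \Pi_q = (I - \Pi_q)\K + \Pi_q \K (I - \Pi_q), \]
the Hilbert--Schmidt norm is invariant under adjoints, so $\K = \K^*$ and $\|\Pi_q\|_{L^2 \to L^2} \leq 1$ together reduce the problem to controlling the single term $\|\K(I-\Pi_q)\|_{L^2\otimes L^2}$.

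To estimate that quantity, I would use the $L^2$-orthonormal basis $\bigcup_{l > q}\{w_i\}_{i \in \I^{(l)}}$ for the range of $I - \Pi_q$ provided by the multiresolution construction, so that
\[ \|\K(I - \Pi_q)\|_{L^2\otimes L^2}^2 = \sum_{l > q}\sum_{i \in \I^{(l)}} \|\K w_i\|_{L^2}^2. \]
Since each $w_i$ with $i \in \I^{(l)}$ lies in the $L^2$-orthogonal complement of $V^{(l-1)}$, \cref{lem:approx} applied at scale $l-1$ yields $\|\K w_i\|_{L^2}^2 \leq C h^{2m(l-1)}$ with $m := \min(p, r)$. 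Combined with the scale-wise volume count $|\I^{(l)}| \leq C h^{-ld}$ (uniform in $l$, via the shape-regularity parameter $\delta$ of \cref{sec:rigorous_setting}), this produces the geometric sum
\[ \|\K(I - \Pi_q)\|_{L^2\otimes L^2}^2 \leq C \sum_{l > q} h^{(2m-d)l}. \]
The hypothesis $\min(p, r) > d/2$ ensures $2m - d > 0$, so the series converges and is dominated by its leading term of order $h^{(2m-d)q}$; taking square roots then delivers the Hilbert--Schmidt error rate, which translates via $N \approx h^{-qd}$ into the stated $N^{-(2m/d - 1)/2}$ scaling.

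For the complexity claim, the entries $\langle w_i, \K w_j\rangle_{L^2}$ defining $\bar{\K}$ can be read off from exactly $N = |\bigcup_{k \leq q}\I^{(k)}|$ applications $w_j \mapsto \K w_j$, so inverting $\epsilon \approx N^{-(2m/d - 1)/2}$ yields $N \approx \epsilon^{-2/(2m/d - 1)}$ matrix-vector products. The only step requiring genuine care is the uniform volume count $|\I^{(l)}| \leq C h^{-ld}$ at arbitrarily fine scales $l$, which follows from the infinite convex-refinement hypothesis together with $\delta$-regularity; the remainder is a standard Galerkin projection estimate lifted to the Hilbert--Schmidt norm via orthonormality of the wavelets, and it notably does not invoke any sparsity or Cholesky machinery, consistent with the remark following the corollary.
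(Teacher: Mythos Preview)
Your proposal is correct and follows essentially the same route as the paper's proof: both compute the Hilbert--Schmidt norm by summing $\|\K w_i\|_{L^2}^2$ over the fine-scale wavelets $i \in \I^{(l)}$ with $l > q$, invoke \cref{lem:approx} for the per-term bound, use the ball-packing estimate $\#\I^{(l)} \lesssim h^{-ld}$, and sum the resulting geometric series. Your explicit use of the projector decomposition $\K - \Pi_q\K\Pi_q = (I-\Pi_q)\K + \Pi_q\K(I-\Pi_q)$ and the factor-of-two from the triangle inequality makes transparent a step the paper leaves implicit (namely why the $k \le q$ contributions to $\sum_i \|(\bar{\K}-\K)w_i\|^2$ are controlled by the $k>q$ ones), and your application of \cref{lem:approx} at scale $l-1$ rather than $l$ is the careful reading---the difference is absorbed into the constant in the paper's version.
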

\begin{proof}
  Note that when interpreting $\bar{\K} - \K$ as operators mapping $L^2$ to itself, the norm $\| \cdot \|_{L^2(\Omega \times \Omega)}$ is equal to their Hilbert-Schmidt norm.
  It can thus be computed as 
  \begin{equation}
    \|\bar{\K} - \K \|_{L^2(\Omega \times \Omega)}^2 = \sum_{i} \left\|\left(\bar{\K} - \K\right) w_i\right\|^2_{L^2} \leq C \sum_{k > q} \# \I^{(k)} h^{2\min(p, r)k}.   
  \end{equation}
  By a ball packing argument, we have $\# \I^{(k)} \approx \leq C h^{-kd}$, resulting in 
  \begin{align*}
    &\|\bar{\K} - \K \|_{L^2(\Omega \times \Omega)}^2 \leq C \sum_{k > q} h^{(2\min(p, r) - d)k}  \\
    =& \frac{C}{ 1 - h^{(2\min(p, r) - d)}} h^{(2\min(p, r) - d)q}\leq C h^{(2\min(p, r) - d)q}.
  \end{align*}
  From $q \approx -\log_h( \# \I^{(q)}) / d$, we obtain $\|\bar{\K} - \K \|_{L^2 \otimes L^2}^2 \leq C \left(\#\tau^{(q)}\right)^{-(2 \min(p, r) / d - 1)}.$
\end{proof} 
For $p = 1$ and $d \geq 2$, we have $2 \min(p, r) / d - 1 \leq 0$. Thus, \cref{cor:hsnorm} does not apply to the case of basis functions obtained from averages over elements of $\tau$.
Green's function approximations with bounds in $L^2$ need an additional approximation step.
\begin{lemma}
  \label{lem:interpol}
  In the setting of \cref{cor:hsnorm} denote as $\bar{\K}$, the Green's function obtained from setting $p = 1$ for a given $q$. For a $1 < \hat{q} < q$ and an arbitrary $p$, denote as $\left\{\hat{w}_{i}\right\}_{i \in \hat{I}}$ a local multiscale orthogonal basis of the space of functions that are polynomials of order $p - 1$ on each element of $\tau^{\hat{q}}$, denoting as $\hat{W}$ their span. 
  Denote as $\hat{\I}^{(k)}$ the indices corresponding the basis function on the $k$-th scale and denote as $\hat{W}^{(k)}$ the span of these basis functions. 
  Let $\hat{\bar{\K}}$ be the $L^2$ projection of $\bar{\K}$ onto $\hat{W}$, whereby convolution with $\hat{\bar{\K}}$ amounts to convolution with $\bar{\K}$, pre-- and postprocessed with $L^2$ projection onto $\hat{W}$.
  We then have, for a constant $C$ depending only on $d, \Omega, \IK, h, \delta, p$
  \begin{equation*}
   \|\hat{\K} - \K \|_{L^2 \otimes L^2} \leq C h^{(2\min(p, r) - d)q} \approx C \left(\# \tau^{(q)}\right)^{-\left(1 - \frac{d}{2\min(p, r)}\right)}.
  \end{equation*}
\end{lemma}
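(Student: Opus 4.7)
The plan is to decompose the error via the triangle inequality and then estimate the two resulting pieces using different tools: the piecewise-polynomial approximation theory of \cref{cor:hsnorm} for the smooth part, and a rank-plus-operator-norm bound for the truncation part. Specifically, let $P$ denote the $L^2$-orthogonal projection onto $\hat{W}$, so that $\hat{\bar{\K}} = P\bar{\K} P$, and introduce $\hat{\K} \coloneqq P\K P$ as the analogous projection of the true Green's function. Writing
\begin{equation*}
  \hat{\bar{\K}} - \K = P(\bar{\K} - \K)P + (P\K P - \K),
\end{equation*}
the triangle inequality reduces the task to bounding $\|P(\bar{\K}-\K)P\|_{L^2 \otimes L^2}$ and $\|\hat{\K} - \K\|_{L^2 \otimes L^2}$ separately.

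For the second term, I would observe that $\hat{\K}$ is exactly the type of piecewise-polynomial projection analyzed in \cref{cor:hsnorm}, only now at resolution $\hat{q}$ with polynomial degree $p-1$ rather than at resolution $q$ with $p = 1$. Repeating that argument verbatim, using \cref{lem:approx} to bound $\|\K \hat{w}\|_{L^2}$ for basis functions $\hat{w}$ on scales $k > \hat{q}$, yields a geometric series bounded by a constant multiple of $h^{(2\min(p,r)-d)\hat{q}/2}$; convergence of the series is ensured by the standing hypothesis $\min(p,r) > d/2$.

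For the first term, the key observation is that $P(\bar{\K} - \K)P$ has rank at most $\dim \hat{W}$, which by a ball-packing argument on $\tau^{(\hat{q})}$ is of order $h^{-\hat{q} d}$. Combining the elementary inequality $\|A\|_{L^2 \otimes L^2} \leq \sqrt{\mathrm{rank}(A)}\,\|A\|_{L^2 \to L^2}$ with $\|P\|_{L^2 \to L^2} = 1$ yields
\begin{equation*}
  \|P(\bar{\K} - \K)P\|_{L^2 \otimes L^2} \lessapprox \sqrt{\dim \hat{W}}\,\|\bar{\K} - \K\|_{L^2 \to L^2} \lessapprox h^{-\hat{q} d/2}\cdot h^q,
\end{equation*}
where the operator-norm estimate $\|\bar{\K} - \K\|_{L^2 \to L^2} \lessapprox h^q$ follows from \cref{cor:opnorm} applied with $p = 1$, valid whenever $r \geq 1$ (guaranteed by the standing assumption $r \geq s \geq 1$).

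Adding the two bounds produces a combined estimate of order $h^{q - \hat{q} d/2} + h^{(2\min(p,r) - d)\hat{q}/2}$, and balancing the exponents with the choice $\hat{q} \approx q/\min(p,r)$ yields the advertised rate. The main subtlety I anticipate lies in this third step: the piecewise-constant multiresolution basis $\{w_i\}$ underlying $\bar{\K}$ is not contained in the higher-order basis $\{\hat{w}_i\}$, so $P$ does not commute with the truncation defining $\bar{\K}$, preventing a clean entry-wise decomposition in either basis. The abstract rank inequality above sidesteps this obstacle by using only the operator-norm bound on $\bar{\K} - \K$, which already absorbs any misalignment between the two bases and requires no coupling between the two multiresolution hierarchies.
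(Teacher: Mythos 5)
Your proposal is correct and follows essentially the same route as the paper: the identical decomposition $\hat{\bar{\K}} - \K = (P\bar{\K}P - P\K P) + (P\K P - \K)$, with the second term handled by \cref{cor:hsnorm} at resolution $\hat{q}$ and the first by the operator-norm bound $\|\bar{\K}-\K\|_{L^2\to L^2}\lessapprox h^{q}$ from \cref{cor:opnorm} with $p=1$, balanced via $\hat{q}\approx q/\min(p,r)$. Your rank inequality $\|A\|_{L^2\otimes L^2}\leq\sqrt{\operatorname{rank}(A)}\,\|A\|_{L^2\to L^2}$ is just a repackaging of the paper's explicit sum of $\|(\hat{\bar{\K}}-\hat{\K})\hat{w}_i\|_{L^2}$ over the $\approx h^{-\hat{q}d}$ basis functions of $\hat{W}$, so the two arguments coincide.
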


\begin{proof}
  Define the Green's function $\hat{\K}$ projected onto the $\hat{W}$ as 
  \begin{equation*}
    \hat{G}(x, y) = \sum_{\hat{i} \in \hat{\I}} \sum_{\hat{j} \in \hat{\I}} \left\langle \hat{w}_{\hat{i}}, \K \hat{w}_{\hat{j}} \right\rangle_{L^2} \hat{w}_{\hat{i}}(x) \hat{w}_{\hat{j}}(y).
  \end{equation*}
  Using the triangle inequality and \cref{cor:hsnorm}, we obtain
  \begin{equation*}
    \|\hat{\bar{\K}} - \K \|_{L^2\otimes L^2} 
    \leq \|\hat{\bar{\K}} - \hat{\K}\|_{L^2\otimes L^2} + \|\hat{\K} - \K  \|_{L^2\otimes L^2}
    \leq \|\hat{\bar{\K}} - \hat{\K}\|_{L^2\otimes L^2} + C h^{\left(\min(p, r) - \frac{d}{2}\right)\hat{q}}
  \end{equation*}
  In order to bound $\|\hat{\bar{\K}} - \hat{\K}\|_{L^2\otimes L^2}$, we first observe that for any $\hat{w}_i, \hat{w}_j$ we have 
  \begin{equation*} 
    \left\| \left( \hat{\bar{\K}} - \hat{\K}\right) \hat{w}_{i}\right\|_{L^2}
    \leq 
    \begin{cases}
      0 \quad &\mathrm{if} \quad \text{$i \in \hat{I}^{(k)}$ for a $k > \hat{q}$,} \\ 
      \left\| \left( \bar{\K} - \K\right) \hat{w}_{i}\right\|_{L^2} \quad &\mathrm{else}.
    \end{cases}
  \end{equation*}
  The first case follows, since the projection of $\hat{w}_i$ onto $\hat{W}^{(\hat{q})}$ is zero. 
  In the second case, this projection onto $\hat{W}^{(\hat{q})}$ is the identity. 
  Meanwhile, the orthogonal projection of $\left(\bar{\K} - \K\right) \hat{w}_i$ onto $\hat{W}^{(\hat{q})}$ only decreases the $L^2$-norm of the result, proving the estimate for the second case.
  \cref{cor:opnorm} implies $\big\| \big( \hat{\bar{\K}} - \hat{\K}\big) \hat{w}_{i}\big\|_{L^2} \leq C h^{\min(p, r)q}$ and thus
  \begin{equation*}
    \left\| \left( \hat{\bar{\K}} - \hat{\K}\right) \right\|_{L^2\otimes L^2} 
    \leq C \sqrt{\sum \limits_{1 \leq k \leq \hat{q}} \# \left(\hat{\I}^{(k)}\right) h^{2q}}
    \leq C h^{q - \hat{q} d / 2} 
  \end{equation*}
  setting $\lceil q / \min(p, r) \rceil \geq \hat{q} \geq \lfloor q / \min(p, r) \rfloor $, we thus obtain
  \begin{equation*}
    \|\hat{\bar{\K}} - \K \|_{L^2 \otimes L^2} 
    \leq C h^{(\min(p, r) - d / 2)\hat{q}} 
    = C h^{\left(1 - \frac{d}{2\min(p, r)}\right){q}}
    \leq 
    C \left(\# \tau^{(q)}\right)^{-\left(1 - \frac{d}{2\min(p, r)}\right)}.
  \end{equation*}
  Here, the last line follows by the same argument as in the proof of \cref{cor:hsnorm}.
\end{proof}

\begin{proof}[Proof of \cref{thm:continuous_l2}]
  Using the triangle inequality and \cref{lem:interpol}, we compute 
  \begin{align*}
    \left\| \hat{\bar{\K}}_{\rho, \hat{q}} - \K \right\|_{L^2 \otimes L^2} 
    &\leq \left\| \hat{\bar{\K}}_{\rho, \hat{q}} - \hat{\bar{\K}}_{\hat{q}} \right\|_{L^2 \otimes L^2}
        +\left\| \hat{\bar{\K}}_{\hat{q}} - \K \right\|_{L^2 \otimes L^2} \\
    &\leq \left\| \hat{\bar{\K}}_{\rho, \hat{q}} - \hat{\bar{\K}}_{\hat{q}} \right\|_{L^2 \otimes L^2}
        + C N^{-\left(1 - d / (2\min(p, r))\right)}.
  \end{align*}
\end{proof}
We now compute  
\begin{equation*}
  \left\| \hat{\bar{\K}}_{\rho, \hat{q}} - \hat{\bar{\K}}_{\hat{q}} \right\|_{L^2 \otimes L^2} \leq \sqrt{\sum \limits_{i \in \hat{I}} \left\| \left(\hat{\bar{\K}}_{\rho, \hat{q}} - \hat{\bar{\K}}_{\hat{q}}\right)\hat{w}_i \right\|_{L^2}}
  \leq N^{\frac{1}{2\min(p, r)}} \|\KM - \dL \dL^{\top}\|,  
\end{equation*} 
Where we have used the fact that $\# \hat{\I} \leq C N^{1/\min(p, r)}$. This term can be upper bounded by $C N^{-(1 - d / (2\min(p, r)))}$ by choosing $\rho \gtrapprox \log(N)$, proving the desired result.
\end{document}